
\documentclass[english]{amsart}


\usepackage{amsmath,amssymb} 
\usepackage{mathrsfs} 
\usepackage[shortalphabetic]{amsrefs} 
\usepackage[usenames,dvipsnames]{xcolor} 
\usepackage{bbm} 
\usepackage[scr=boondoxo]{mathalfa} 
\usepackage{dsfont} 
\usepackage[
	colorlinks,
	plainpages,
	citecolor=red!50!black,
    filecolor=Darkgreen,
    linkcolor=blue!40!black,
    urlcolor=cyan!50!black!90]{hyperref} 
\usepackage{enumitem}
\usepackage{version} 

\usepackage{babel}  
\usepackage{csquotes}
\MakeOuterQuote{"}
\usepackage{tikz-cd} 
\usepackage{accents} 
\usepackage{stmaryrd} 

\newtheorem{theorem}{Theorem}[section]
\newtheorem{proposition}[theorem]{Proposition}
\newtheorem{corollary}[theorem]{Corollary}
\newtheorem{lemma}[theorem]{Lemma}

\newtheorem{theoremintro}{Theorem}

\theoremstyle{definition}
\newtheorem{definition}[theorem]{Definition}
\newtheorem{remark}[theorem]{Remark}

\newcommand{\End}{\mathrm{End}}

\newcommand{\Hom}{\mathrm{Hom}}

\newcommand{\iso}{\xrightarrow{\,\smash{\raisebox{-0.5ex}{\ensuremath{\scriptstyle\sim}}}\,}}

\newcommand{\into}{\hookrightarrow}
\newcommand{\onto}{\twoheadrightarrow}


\makeatletter  
\newcommand{\sbullet}{%
  \hbox{\fontfamily{lmr}\fontsize{.4\dimexpr(\f@size pt)}{0}\selectfont\textbullet}}

\makeatother



\newcommand{\mfb}{\mathfrak{b}}

\newcommand{\mfg}{\mathfrak{g}}
\newcommand{\mfh}{\mathfrak{h}}

\newcommand{\mfn}{\mathfrak{n}}


\newcommand{\mfgl}{\mathfrak{g}\mathfrak{l}}
\newcommand{\mfsl}{\mathfrak{s}\mathfrak{l}}


\newcommand{\mcJ}{\mathcal{J}}

\newcommand{\mcV}{\mathcal{V}}


\newcommand{\mbI}{\mathbf{I}}

\newcommand{\mbU}{\mathbf{U}}


\newcommand{\mbbA}{\mathbb{A}}

\newcommand{\mbbL}{\mathbb{L}}

\newcommand{\mbbV}{\mathbb{V}}


\newcommand{\C}{\mathbb{C}}
\newcommand{\Q}{\mathbb{Q}}

\newcommand{\Z}{\mathbb{Z}}
\newcommand{\N}{\mathbb{N}}



\newcommand{\mrL}{\mathrm{L}}
\newcommand{\mrR}{\mathrm{R}}
\newcommand{\mrT}{\mathrm{T}}

\newcommand{\msE}{\mathsf{E}}

\newcommand{\msL}{\mathsf{L}}

\newcommand{\msQ}{\mathsf{Q}}

\newcommand{\msS}{\mathsf{S}}

\newcommand{\msT}{\mathsf{T}}


\newcommand{\eps}{\epsilon}
\newcommand{\veps}{\varepsilon}






\raggedbottom 


\title[Rational forms via the $R$-matrix formalism]{
The $R$-matrix presentation for the rational form of a quantized enveloping algebra
}

\author[M. Rupert]{Matthew Rupert}
\address{Department of Mathematics and Statistics, University of Saskatchewan}
\email{matthew.rupert@usask.ca}

\author[C. Wendlandt]{Curtis Wendlandt}
\address{Department of Mathematics and Statistics, University of Saskatchewan}
\email{wendlandt@math.usask.ca}

\subjclass[2020]{Primary 17B37; Secondary 17B38} 

\usepackage{lmodern}
\usepackage[T1]{fontenc}
\usepackage{graphicx}
\usepackage{upgreek}
\allowdisplaybreaks
\numberwithin{equation}{section}



\newcommand{\URg}{\mathbf{U}_{\mathrm{R}}^{\lambda}(\mathfrak{g})}
\newcommand{\bUqg}{\mathbf{U}_q^\lambda(\mathfrak{g})}
\newcommand{\Lz}{\mathscr{L}}
\newcommand{\sbinom}[2]{\begin{bmatrix}{#1}\\{#2}\end{bmatrix}}

\newcommand{\wlattice}{\mathsf{\Lambda}}
\newcommand{\olambda}{|\![\lambda]\!|}

\newcommand{\Xr}{\mathrm{X}}
\newcommand{\Yr}{\mathrm{Y}}
\newcommand{\chev}{\theta}
\newcommand{\Rp}{\mathrm{R}^+}
\newcommand{\Rd}{\mathscr{D}}
\newcommand{\GLgr}{\mathrm{GL}^\mfh(V)}

\DeclareMathAlphabet\EuScript{U}{eus}{m}{n}
\SetMathAlphabet\EuScript{bold}{U}{eus}{b}{n}
\newcommand{\scL}{\EuScript{L}}
\newcommand{\scK}{\EuScript{K}}
\newcommand{\scE}{\EuScript{E}}

\setlength{\parskip}{5pt}

\begin{document}

\begin{abstract}
Let $U_q(\mathfrak{g})$ denote the rational form of the quantized enveloping algebra associated to a complex simple Lie algebra $\mathfrak{g}$. Let $\lambda$ be a nonzero dominant integral weight of $\mathfrak{g}$, and let $V$ be the corresponding type $1$ finite-dimensional irreducible representation of $U_q(\mfg)$. Starting from this data, the $R$-matrix formalism for quantum groups outputs a Hopf algebra $\URg$ defined in terms of a pair of generating matrices satisfying well-known quadratic matrix relations. In this paper, we prove that this Hopf algebra admits a  Chevalley--Serre type presentation which can be recovered from that of $U_q(\mfg)$ by adding a single invertible quantum Cartan element. We simultaneously establish that $\URg$ can be realized as a Hopf subalgebra of the tensor product of the space of Laurent polynomials in a single variable with the quantized enveloping algebra associated to the lattice generated by the weights of $V$.  The proofs of these results are based on a detailed analysis of the homogeneous components of the matrix equations and generating matrices defining $\URg$, with respect to a natural grading by the root lattice of $\mfg$ compatible with the weight space decomposition of $\End(V)$. 
\end{abstract}

\maketitle

{\setlength{\parskip}{0pt}
\setcounter{tocdepth}{1} 
\tableofcontents
}

\section{Introduction}\label{sec:Intro}

\subsection{}\label{ssec:I-Summary}

 This article is a sequel to the authors work \cite{GRW-RQUE} with S. Gautam, which addressed the problem of rebuilding the quantized universal enveloping algebra $U_\hbar(\mfg)$ of a semisimple complex Lie algebra $\mfg$, viewed as a topological Hopf algebra over $\C[\![\hbar]\!]$, from a solution $\mrR$ of the quantum Yang--Baxter equation associated to any of its non-trivial finite-dimensional representations $\mcV$. In more detail, the solution to this problem presented in \cite{GRW-RQUE} passes through a version of the  Faddeev--Reshetikhin--Takhtajan formalism for constructing quantum groups \cite{FRT}, which produces a Hopf algebra $\mathrm{U}_\mrR(\mfg)$ whose generators are encoded by a pair of matrices $\mrT^+$ and $\mrT^-$ which, in particular, have the property that $\mrL^\pm:=I+\hbar\mrT^\pm$ satisfy the celebrated matrix equations 
\begin{equation}\label{RLL:I}
\mrR\mrL_1^\pm \mrL_2^\pm =\mrL_2^\pm \mrL_1^\pm \mrR \quad \text{ and }\quad 
\mrR \mrL_1^+ \mrL_2^-=\mrL_2^-\mrL_1^+\mrR.
\end{equation}
In Theorem 5.7 of \cite{GRW-RQUE}, it was shown that $\mathrm{U}_\mrR(\mfg)$ is isomorphic to the tensor product of the Drinfeld double of the Borel subalgebra $U_\hbar(\mfb^+)\subset U_\hbar(\mfg)$ with a commutative Hopf algebra whose definition is encoded by the space of $\mfg$-invariants of the classical limit of $\mcV$. This was then used to deduce that $U_\hbar(\mfg)$ can be recovered both as a quotient of $\mathrm{U}_\mrR(\mfg)$ by the ideal generated by the coefficients of certain central matrices, and as the fixed-point subalgebra of $\mathrm{U}_\mrR(\mfg)$ with respect to a natural family of automorphisms; see \cite{GRW-RQUE}*{Thm.~5.14}. 

Crucially, the proofs of these results make extensive use of deformation arguments which reduce many of the key statements to simpler results for the Lie algebra $\mfg$ itself, established in \cite{GRW-RQUE}*{\S3}. We note, however, that these arguments no longer directly apply when $U_\hbar(\mfg)$ is replaced by its rational form $U_q(\mfg)$ defined over $\Q(q)$ (see Definition \ref{D:Uqg}) and, in addition, they do not explain how to derive the defining Chevalley--Serre relations of the quantized enveloping algebra of $\mfg$ from the quadratic matrix equations at the heart of the $R$-matrix formalism. 
The main goal of this article is to remedy this issue, with focus on the case where the underlying representation is irreducible.

\subsection{Main results}\label{ssec:results}

We now summarize our main results in detail. Let  $\lambda$ be a fixed nonzero dominant integral weight of $\mfg$ and let $V=V(\lambda)$ be the associated type $1$ finite-dimensional irreducible representation of $U_q(\mfg)$; see Section \ref{ssec:Uqg-reps}.  Using that $U_q(\mfg)$ is nearly quasitriangular, one obtains a  distinguished solution $\mrR\in \End(V\otimes V)$ of the quantum Yang--Baxter equation; see \eqref{R:def}.  Starting from this $R$-matrix, we define a Hopf algebra $\URg$ over $\Q(q)$, graded by the root lattice $\msQ$ of $\mfg$, whose generators are encoded by matrices
\begin{equation*}
\mrL^\pm\in \bigoplus_{\beta\in \msQ_\pm}\End(V)_{\pm\beta}\otimes \URg,
\end{equation*}
where $\msQ_+$ and $\msQ_-$ are the positive and negative cones in $\msQ$, respectively, and $\End(V)_\beta$ is the $\beta$-weight space of the $U_q(\mfg)$-module $\End(V)$. These generating matrices are subject to the quadratic matrix equations \eqref{RLL:I}
in addition to the requirement that their weight zero components $\Lz^+$ and $\Lz^-$ are mutual inverses. 

In this article, we make the relation between $\URg$ and $U_q(\mfg)$ precise without relying on the deformation arguments of \cite{GRW-RQUE}. To explain this relation, 
let $\wlattice(\lambda)$ be the submodule of the weight lattice of $\mfg$ generated by the weights of $V$. As $\lambda$ is nonzero, this is a genuine lattice containing $\msQ$, and we may thus associate to it a quantized enveloping algebra $U_q^\lambda(\mfg)\supset U_q(\mfg)$; see Definition \ref{D:Uqg} and Lemma \ref{L:lattice-V}. The following theorem, which is a combination of  Proposition \ref{P:gen-rels} and Theorem \ref{T:main}, is the main result of this article.
\begin{theoremintro}\label{T:intro}
There is an embedding of $\msQ$-graded Hopf algebras
\begin{equation*}
\Upupsilon:\URg \into U_q^{\lambda}(\mfg)\otimes \Q(q)[v^{\pm 1}]
\end{equation*}
with image $\bUqg$ which can be identified with the unital associative $\Q(q)$-algebra generated by $\{\upxi_i^\pm,x_i^\pm\}_{i\in \mbI}\cup\{\upxi_\lambda^\pm\}$, subject to the relations
\begin{gather*}
\upxi_i^\pm\upxi_i^\mp=1=\upxi_\lambda^\pm\upxi_\lambda^\mp, \quad[\upxi_i^\pm, \upxi_j^\pm]=0=[\upxi_i^\pm,\upxi_\lambda^\pm],
\\
\upxi_i^+ x_j^\pm \upxi_i^-=q^{\pm (\alpha_i,\alpha_j)}x_j^\pm, \quad \upxi_\lambda^+ x_j^\pm \upxi_\lambda^-=q^{\pm (\lambda,\alpha_j)}x_j^\pm,
\\
[x_i^+,x_j^-]=\delta_{ij}\frac{\upxi_i^+-\upxi_i^-}{q_i-q_i^{-1}},
\\
\sum_{b=0}^{1-a_{ij}}(-1)^b \sbinom{1-a_{ij}}{b}_{q_i} (x_i^\pm)^b x_j^\pm (x_i^\pm)^{1-a_{ij}-b}=0 \quad \text{ for }\; i\neq j,
\end{gather*}
where  $(a_{ij})_{i,j\in \mbI}$ is the Cartan matrix  of $\mfg$, $\{\alpha_i\}_{i\in \mbI}$ is a basis of simple roots, $(\,,\,)$ is a fixed invariant form on $\mfg$, and $q_i=q^{d_i}$ with $d_i$ the $i$-th symmetrizing integer; see Section \ref{ssec:g}. 
\end{theoremintro}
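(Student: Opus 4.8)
The overall strategy is to isolate a finite generating set from the $\msQ$-homogeneous components of $\mrL^{\pm}$, to read off every relation from the corresponding components of the matrix equations \eqref{RLL:I}, and finally to realize the embedding $\Upupsilon$ by exhibiting a compatible pair of $L$-operators inside $U_{q}^{\lambda}(\mfg)\otimes\Q(q)[v^{\pm 1}]$. First I would fix a basis of $V$ adapted to its weight space decomposition and write $\mrL^{\pm}=\sum_{\beta\in\msQ_{+}}\mrL^{\pm}_{\pm\beta}$ in homogeneous components. Since $\Lz^{\pm}$ has weight zero it preserves each weight space $V_{\mu}$, and I would show it acts there by an invertible group-like scalar $\upxi_{\mu}^{\pm}\in\URg$, with the products $\upxi_{\mu}^{\pm}\upxi_{\nu}^{\pm}$ realizing the lattice $\wlattice(\lambda)$. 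Taking $\upxi_{\lambda}^{\pm}$ to be the action on the highest weight line, and recovering the simple Cartan elements $\upxi_{i}^{\pm}$ from ratios of the $\upxi_{\mu}^{\pm}$ across adjacent weights, exhibits $\upxi_{\lambda}^{\pm}$ as an independent group-like that records the overall normalization of $\Lz^{\pm}$, while the $x_{i}^{\pm}$ would be defined from the simple-root components $\mrL^{\pm}_{\pm\alpha_{i}}$. The key structural claim is that these elements generate $\URg$, which I would prove by induction on the height of $\beta$, extracting from the $\mrL^{\pm}\mrL^{\pm}$ relations a recursion that expresses each $\mrL^{\pm}_{\pm\beta}$ through products of strictly lower components.

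I would then derive the defining relations by isolating graded pieces of \eqref{RLL:I}. The commutativity of the $\upxi$'s, the inversion identities, and the conjugation rules $\upxi_{i}^{+}x_{j}^{\pm}\upxi_{i}^{-}=q^{\pm(\alpha_{i},\alpha_{j})}x_{j}^{\pm}$ and $\upxi_{\lambda}^{+}x_{j}^{\pm}\upxi_{\lambda}^{-}=q^{\pm(\lambda,\alpha_{j})}x_{j}^{\pm}$ all come from the weight-zero and simple-root components of the two $\mrL^{\pm}\mrL^{\pm}$ equations, while the commutator $[x_{i}^{+},x_{j}^{-}]$ with its $\delta_{ij}$ term is produced by the mixed equation $\mrR\mrL_{1}^{+}\mrL_{2}^{-}=\mrL_{2}^{-}\mrL_{1}^{+}\mrR$ together with the inversion constraint $\Lz^{+}\Lz^{-}=1$. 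The quantum Serre relations are the delicate point and the step I expect to be the main obstacle: they are of degree $1-a_{ij}$ in the $x_{i}^{\pm}$, whereas \eqref{RLL:I} is only quadratic in $\mrL^{\pm}$, so they are not visible in any single low-weight component. I would handle them by analysing the component of weight $(1-a_{ij})\alpha_{i}+\alpha_{j}$: the higher components $\mrL^{\pm}_{\pm\beta}$ are not new generators but are already forced, through the relations established above, to be specific ordered products of the $x_{i}^{\pm}$, and the vanishing of the appropriate entry of \eqref{RLL:I} then translates exactly into the quantum Serre expression.

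To construct $\Upupsilon$ and identify its image, I would build $L$-operators $\scL^{\pm}$ with entries in $U_{q}^{\lambda}(\mfg)\otimes\Q(q)[v^{\pm 1}]$ from the nearly quasitriangular structure of $U_{q}(\mfg)$ evaluated on $V$. The Cartan part of the evaluated $R$-matrix involves group-likes $K_{\mu}$ for the weights $\mu$ of $V$, which is precisely why the larger lattice form $U_{q}^{\lambda}(\mfg)$ rather than $U_{q}(\mfg)$ is the natural target, and the free variable $v$ is there to absorb the scalar normalization of $\mrR$ that the $R$-matrix formalism leaves unconstrained. Because the universal $R$-matrix satisfies the abstract Yang--Baxter and intertwining identities, these $\scL^{\pm}$ satisfy \eqref{RLL:I} and have invertible weight-zero parts, so the defining property of $\URg$ yields a $\msQ$-graded Hopf algebra map $\Upupsilon$. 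Tracking generators gives $\upxi_{i}^{\pm}\mapsto K_{i}^{\pm 1}\otimes 1$, $x_{i}^{+}\mapsto E_{i}\otimes 1$ and $x_{i}^{-}\mapsto F_{i}\otimes 1$ up to nonzero scalars, and $\upxi_{\lambda}^{\pm}\mapsto K_{\lambda}^{\pm 1}\otimes v^{\pm 1}$, so the image is exactly the subalgebra $\bUqg$ generated by these elements.

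Finally I would establish injectivity, which simultaneously shows that the listed relations are complete. The abstractly presented algebra admits a triangular, PBW-type spanning set obtained by reordering monomials using the Serre and conjugation relations, and I would show that the composite with $\Upupsilon$ sends this spanning set to a linearly independent family: the triangular decomposition of $U_{q}^{\lambda}(\mfg)$ controls the root-vector monomials, while the free variable $v$ makes the $\upxi_{\lambda}$-degree faithful and hence separates the extra torus direction from the image of the $\upxi_{i}^{\pm}$. Injectivity of the composite forces both the surjection from the presented algebra onto $\URg$ to be an isomorphism and $\Upupsilon$ itself to be injective, which identifies $\URg$ with the stated presentation and its image with $\bUqg$.
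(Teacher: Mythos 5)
Your overall architecture tracks the paper's quite closely: diagonalizing the weight-zero part $\Lz^{\pm}$ to obtain Cartan elements, extracting $x_i^{\pm}$ from the simple-root components of the unipotent parts, proving generation by height induction on graded pieces of the matrix relations, building $\Upupsilon$ from $L$-operators constructed out of $\mathscr{R}^+$, and your PBW-style injectivity argument is a legitimate variant of the paper's explicit construction of $\Upupsilon^{-1}$ (the paper instead proves Proposition \ref{P:gen-rels} by a quotient construction and then checks $\Upupsilon\circ\Upupsilon^{\sharp}=\mathrm{Id}$ on generators). However, your treatment of the $q$-Serre relations --- which you correctly single out as the main obstacle --- has a genuine gap. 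You propose to read them off from the single weight-$((1-a_{ij})\alpha_i+\alpha_j)$ component of \eqref{RLL:I} on $V\otimes V$, on the grounds that the higher components $\mrL^{\pm}_{\pm\beta}$ are ``forced to be specific ordered products of the $x_i^{\pm}$.'' But the height induction (Theorem \ref{T:Gen}) only yields $\mbbL^+_{\beta}\in \pi(U_q(\mfn^+)_\beta)\otimes \mbU_\mrR^\lambda(\mfn^+)_\beta$; it does not produce explicit polynomial formulas for these components in the $\Xr_i$, because the recursion \eqref{Rec:2} still involves $\mbbL^+_\beta$ inside a commutator with $\Rp_\beta$. Worse, since $\pi$ is far from injective on the single irreducible module $V$, the operators $\pi(E_i^r E_j E_i^{1-a_{ij}-r})$ need not be separable there, so any one entry of the quadratic equation on $V\otimes V$ conflates the coefficients of the distinct monomials whose alternating sum is the Serre element; no single entry isolates that combination.

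The paper resolves exactly this point with machinery absent from your sketch: it derives from \eqref{RLL:I} the relations for $V^{\otimes n}\otimes V$ for \emph{all} $n$ (Lemma \ref{L:LVn}), assembles them over $\mbbV=\bigoplus_{n}V^{\otimes n}$, and exploits that $\mbbV$ is a \emph{faithful} $U_q(\mfg)$-module, which is what makes the projection $\veps^+_\mbbV$ of \eqref{veps+} well defined via the triangular decomposition \eqref{Uqg:TD}. The inductive identity $\veps_\mbbV^+\cdot\mathrm{ad}(\pi_\mbbV(F_i))^k\bigl(\mbbL_\beta^\mbbV\bigr)=(q_i-q_i^{-1})^{-k}\,\mathrm{ad}_{q,i}^k\bigl(\mbbL_{\beta-k\alpha_i}^\mbbV\bigr)$ of Proposition \ref{P:Almost-Serre} then kills the left-hand side when $\beta=(1-a_{ij})\alpha_i+\alpha_j$ and $k=1-a_{ij}$, giving $\mathrm{ad}_{q,i}^{1-a_{ij}}(\Xr_j)=0$ and hence \eqref{main-suff:5}; faithfulness is precisely what lets one separate monomials that are indistinguishable on $V$ alone. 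A secondary gap of the same nature: you assert without argument that $\Lz^{\pm}$ acts by a \emph{scalar} on each weight space $V_\mu$. This is Theorem \ref{T:Lz}, and its proof likewise requires the iterated relations $\mrR_{V^{\otimes n},V}\,\Lz_1\cdots\Lz_n\Lz_{n+1}=\Lz_{n+1}\Lz_1\cdots\Lz_n\,\mrR_{V^{\otimes n},V}$ evaluated against vectors $\pi(F_{i_1}\cdots F_{i_n})v_\lambda$, not merely the weight-zero component of \eqref{RLL:I} on $V\otimes V$. So the two pillars your sketch treats as routine are the ones for which the paper had to pass to arbitrary tensor powers of $V$.
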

The Hopf algebra $\bUqg$ is first defined in Section \ref{ssec:Uqg-basic}, where it is also proven to admit the Chevalley--Serre type presentation given in the statement of the above theorem; see Definition \ref{D:bUqg} and Proposition \ref{P:gen-rels}. It satisfies 
\begin{equation*}
U_q(\mfg)\otimes \Q(q)[v_\lambda^{\pm 1}]\subset \bUqg\subset U_q^{\lambda}(\mfg)\otimes \Q(q)[v^{\pm 1}
],
\end{equation*}
where $v_\lambda=v^{n}$ is the minimal positive power of $v$ contained in $\bUqg$; explicitly, $n$ is equal to the order $|\![\lambda]\!|$ of the class of $\lambda$ in the quotient group $\wlattice(\lambda)/\msQ$. Moreover, as will be explained in Proposition \ref{P:bUqg-str}, one has 
\begin{equation*}
\bUqg/(v_\lambda-1)\cong U_q^\lambda(\mfg).
\end{equation*}

 When $\lambda$ is the first fundamental weight of $\mfg=\mfsl_n$, $V$ can be identified with the vector representation $\Q(q)^n$ and $\bUqg$ coincides with the quantized enveloping algebra $U_q(\mfgl_n)$ of the general linear algebra $\mfgl_n$ \cites{DF93,Jimbo86}. In this case, the above theorem reduces to the statement of Theorem 2.1 in \cite{DF93}. More generally, when $\mfg$ is of classical type and $V$ is the vector representation of $U_q(\mfg)$, Theorem \ref{T:intro} reduces to the finite type counterpart of \cite{MiHa-L98}*{Thm.~1}; see also \cites{JLM20-c,JLM20-b}. 
In these special cases, it has been known since the foundational work \cite{FRT} of Faddeev, Reshetikhin and Takhtajan that there exists an invertible central element $\EuScript{Z}\in \URg$ such that
 \begin{equation*}
 \URg/(\EuScript{Z}-1)\cong U_q^\lambda(\mfg).
 \end{equation*}
For the specific (type dependent) definition of $\EuScript{Z}$, we refer the reader to Theorem 12 and Remark 21 of \cite{FRT}, in addition to Theorem 8.33 of \cite{KS-book}. This was extended to the case where $V$ is the seven dimensional irreducible representation of $U_q(\mathsf{G}_2)$ in Theorem 11 of \cite{Sasaki}.  The generalization of these results to any Cartan type and dominant integral weight $\lambda$ is stated explicitly in Corollary \ref{C:Recover}, where a uniform expression for $\EuScript{Z}$ is also given.

\subsection{The inverse of $\Upupsilon$}\label{ssec:results-II} 

The construction of the homomorphism $\Upupsilon$ from Theorem \ref{T:intro} follows a well-known scheme which has been applied in \cites{DF93,MiHa-L98,WRTT,GRW-RQUE}, for instance, and is inspired by \cite{FRT}*{Thm.~18} and \cite{Dr}*{Thm.~6}. Namely,  $\mrL^\pm$ are sent to certain generating matrices $\EuScript{L}^\pm\in \End(V)\otimes \bUqg$ for $\bUqg$ which play the role of \textit{L-operators}. They are constructed using $V$ and the quasi universal $R$-matrix $\mathscr{R}^+$ of $U_q(\mfg)$, and satisfy the defining relations of $\URg$ due to properties of $\mathscr{R}^+$; see Proposition \ref{P:bUqg-L}.

To prove that $\Upupsilon$ is an isomorphism, we deviate from the approach of \cite{GRW-RQUE} and  construct $\Upupsilon^{-1}$ explicitly by carrying out a uniform, type independent, analysis of the components in the Gauss decompositions of $\mrL^+$ and $\mrL^-$. The two  results at the heart of this analysis are Theorems \ref{T:Lz} and \ref{T:Gen}. In Theorem \ref{T:Lz}, it is shown that the weight zero component $\Lz^\pm$ of $\mrL^\pm$ decomposes in the diagonal form 
\begin{equation*}
\Lz^\pm = \sum_{\mu} \mathrm{Id}_{V_\mu}\otimes \mathscr{l}_\mu^{\pm 1},
\end{equation*}
where the summation is taken over the finite set of weights of $V$, and the elements $\mathscr{l}_\mu$ pairwise commute.  The proof of this theorem implies that if $\mu$ and $\gamma$ are any two weights of $V$ then the ratio $\mathscr{l}_\gamma^{-1}\mathscr{l}_\mu$ only depends on the difference $\gamma-\mu\in \msQ$. Since $\lambda$ is nonzero, each simple root $\alpha_i$ arises as such a difference and consequently there is a unique element $\upxi_i\in \URg$, for each $i\in \mbI$, such that
\begin{equation*}
\upxi_i=\mathscr{l}_{\mu+\alpha_i}^{-1}\mathscr{l}_{\mu}
\end{equation*}
for any weight $\mu$ of $V$ for which $\mu+\alpha_i$ is also a weight; see Corollary \ref{C:upxi}. 
We then prove in Theorem \ref{T:Gen} that the components $\mbbL^+_{\alpha_i}$ and $\mbbL_{-\alpha_i}^-$ of the unipotent matrices $\mbbL^+=\Lz^-\mrL^+$ and $\mbbL^-=\mrL^- \Lz^+$ associated to a simple root $\alpha_i$ are always  pure tensors of the form 
\begin{equation*}
\mbbL_{\alpha_i}^+ =\pi(E_i)\otimes \Xr_i \quad \text{ and }\quad \mbbL_{-\alpha_i}^- =\pi(F_i)\otimes \Yr_i,
\end{equation*}
where
$E_i$ and $F_i$ are the standard Chevalley generators of $U_q(\mfg)$ (see Definition \ref{D:Uqg}) and $\pi:U_q(\mfg)\to \End(V)$ is the action homomorphism. In addition, it is shown that all coefficients of $\mbbL^+$ and  $\mbbL^-$ are contained in the subalgebras generated by $\{\Xr_i\}_{i\in \mbI}$ and $\{\Yr_i\}_{i\in \mbI}$, respectively.  

Using these two theorems, we show in the proof of Theorem \ref{T:main} that the assignment
\begin{gather*}
\upxi_i^{\pm}\mapsto \upxi_i^{\pm 1}, \quad \upxi_\lambda^{\pm}\mapsto\mathscr{l}_\lambda^{\mp 1},\quad x_i^+\mapsto \frac{\Xr_i}{q_i^{-1}-q_i} \quad \text{ and }\quad x_i^-\mapsto \frac{\Yr_i}{q_i-q_i^{-1}}
\quad \forall \; i\in \mbI
\end{gather*}
uniquely extends to an algebra homomorphism $\bUqg\to \URg$ which is in fact equal to $\Upupsilon^{-1}$. The main additional difficulty in establishing this assertion lies in showing that the above assignment preserves the $q$-Serre relations of $\bUqg$.  This is established in Corollary \ref{C:Serre} using versions of the defining matrix equations of $\URg$ associated to arbitrary tensor powers of $V$. 

\subsection{Remarks}\label{ssec:remarks}

Let us now give a few concluding remarks. 
Firstly, the results of this article and their proofs are readily seen to remain valid when $\Q(q)$ is replaced by an arbitrary field $\Bbbk$ of characteristic zero, provided $q\in \Bbbk$ is taken to be a nonzero element which is not a root of unity. Part of our motivation for specifying the choice of base field $\Q(q)$ lies in the fact that $\URg$ should admit a natural theory of integral forms over $\mathbb{A}=\Q[q,q^{-1}]$, consistent with the narrative for the quantum enveloping algebra of $\mfgl_n$ from \cite{FiTs19}*{\S3.1}. Though we do not endeavour to develop this theory fully in the present article, we do wish to make a few preliminary observations. 

Let $\mathbb{V}$ denote the free $\mbbA$-submodule of $V$ spanned by its canonical basis. Then, by \cite{Lusztig-Book}*{Cor.~24.1.5}, the $R$-matrix $\mrR\in \End(V\otimes V)$ preserves $\mathbb{V}\otimes_{\mathbb{A}}\!\mathbb{V}$. We may thus define a unital associative $\mbbA$-algebra ${}_\mathbb{A}\!\URg$ by replacing $\End(V)$ by the $\msQ$-graded free $\mbbA$-module $\End_\mbbA(\mbbV)$ in the definition of $\URg$ (see Definition \ref{D:URg}). This is a Hopf algebra over $\mbbA$ which specializes at $q=1$ to a commutative, but non-cocommutative, Hopf algebra over $\Q$ and satisfies
\begin{equation*}
{}_\mathbb{A}\!\URg\otimes_{\mbbA} \Q(q)\cong \URg.
\end{equation*}
 In addition, it admits a (conjecturally injective) homomorphism $\imath: {}_\mathbb{A}\!\URg \to \URg$ with image equal to the $\mbbA$-subalgebra of $\URg$ generated by the matrix coefficients of $\mrL^\pm$, viewed as elements of $\End_\mbbA(\mbbV)\otimes_{\mbbA}\URg$. Furthermore, it follows from the explicit formulas for the universal quasi $R$-matrix of $U_q(\mfg)$ obtained in \cites{KR90, LeSo90} (see also \cite{CPBook}*{\S8.3}) that the composite $\Upupsilon \circ \imath$ satisfies 
 \begin{equation*}
 (\Upupsilon \circ \imath)({}_\mathbb{A}\!\URg)\subset \EuScript{A}_q(\mfg)\otimes_\mbbA \mbbA[v^{\pm 1}],
 \end{equation*}
 where $\EuScript{A}_q(\mfg)\subset U_q^\lambda(\mfg)$ is the quantum group introduced by De Concini and Procesi in \cite{DCP-survey}*{\S12}. In more detail, it is the minimal $\mbbA$-subalgebra of $U_q^\lambda(\mfg)$  which is stable under Lusztig's braid group operators and contains the elements $(q_i-q_i^{-1})E_i$ and $(q_i-q_i^{-1})F_i$ for each $i\in \mbI$; see in particular Theorem 12.1 and (11.8.1) of \cite{DCP-survey}.
\subsection{Outline}\label{ssec:I-Outline}

In Section \ref{sec:Prelim}, we review a number of basic facts from the representation theory of $U_q(\mfg)$, including the definitions and main properties of the $R$-matrices and representation spaces which feature throughout the paper. Section \ref{sec:Uqlambda} is devoted to introducing the quantum group $\bUqg$ and studying its algebraic structure. Sections \ref{sec:URg} and \ref{sec:URg=Uq} contain the main results of this article. The former is focused entirely on the $R$-matrix algebra $\URg$, which is first defined in Definition \ref{D:URg}. The main results of this section (Theorems \ref{T:Lz} and \ref{T:Gen}) show that the coefficients of the weight zero and unipotent parts of the generating matrices $\mrL^\pm$ are remarkably restricted, and are used to identify Chevalley--Serre type generators in $\URg$. In Section \ref{sec:URg=Uq} we prove Theorem \ref{T:main}, which establishes that $\URg$ and $\bUqg$ are one and the same. In addition, we use this theorem to explain in Section \ref{ssec:Cons-R} how to characterize $U_q(\mfg)$ and  $U_q(\mfg)\otimes \Q(q)[v_\lambda^{\pm 1}]$ as Hopf subalgebras of $\URg$; see Corollaries \ref{C:Recover} and \ref{C:nlambda}.
\subsection{Acknowledgments}

The authors are grateful to Alex Weekes for many insightul discussions and helpful comments throughout the writing of this article. The first author was supported by the Pacific Institute for the Mathematical Sciences (PIMS) Postdoctoral Fellowship Program. The second author gratefully acknowledges the support of the Natural Sciences and Engineering Research Council of Canada (NSERC), provided via the Discovery Grants Program (Grant RGPIN-2022-03298 and DGECR-2022-00440).
\section{Preliminaries on \texorpdfstring{$U_q(\mfg)$}{U\_q(g)}}\label{sec:Prelim}

\subsection{The Lie algebra  \texorpdfstring{$\mfg$}{g}}
\label{ssec:g}

Let $\mfg$ be a simple Lie algebra over the complex numbers equipped with an invariant, non-degenerate, symmetric bilinear form $(\,,\,)$.  Let $\mfh\subset \mfg$ be a Cartan subalgebra and $\{\alpha_i\}_{i\in \mbI}\subset \mfh^\ast$ a basis of simple roots relative to $\mfh$. We normalize $(\,,\,)$ so that the square length of every short simple root is $2$. Let $(a_{ij})_{i,j\in \mbI}$ denote the Cartan matrix of $\mfg$, defined by 
\begin{equation*}
d_ia_{ij}=(\alpha_i,\alpha_j) \quad \forall \quad i,j\in \mbI,
\end{equation*}
where $d_i=\frac{(\alpha_i,\alpha_j)}{2}$ is the $i$-th symmetrizing integer. Let $\{\varpi_i\}_{i\in \mbI}\subset \mfh^\ast$ denote the fundamental weights of $\mfg$, uniquely determined by $(\varpi_i,\alpha_j)=d_j \delta_{ij}$ for all $i,j\in \mbI$, and let  $\msQ=\bigoplus_{i\in \mbI}\Z \alpha_i$ and $\wlattice=\bigoplus_{i\in \mbI}\Z \varpi_i$ denote the root and weight lattices of $\mfg$, respectively. In addition, we will write $\msQ_+$ and $\msQ_-=-\msQ_+$ for the positive and negative cones in the root lattice and $\wlattice_+$ for the set of dominant integral weights of $\mfg$. That is, one has
\begin{equation*}
\msQ_+:=\bigoplus_{i\in \mbI}\Z_{\geq 0}\alpha_i \quad \text{ and }\quad \wlattice_+:=\bigoplus_{i\in \mbI}\Z_{\geq 0} \varpi_i.
\end{equation*}
Throughout the entire course of this paper, $\lambda\in \wlattice_+$ will denote a fixed \textit{nonzero} dominant integral weight of $\mfg$. We shall write $\olambda$ for the order of the equivalence class $[\lambda]$ of $\lambda$ in the quotient group $\wlattice/\msQ$. 

Additionally, given $\beta\in \msQ_+$ we will write $\msQ_+^\beta$ for the finite set consisting of all $\nu\in \msQ_+$ satisfying $\nu\leq \beta$ with respect to the standard partial ordering:
\begin{equation*}
\msQ_+^\beta=\{\alpha\in \msQ_+: \beta-\alpha\in \msQ_+\}.
\end{equation*}
Similarly, for each $\beta\in \dot\msQ_+:=\msQ_+\setminus\{0\}$ we define $\dot\msQ_+^\beta$ to be the finite subset of $\dot\msQ$ consisting of all $\alpha\in \dot\msQ_+$ for which $\beta-\alpha\in \dot\msQ_+$.

\subsection{The Hopf algebra \texorpdfstring{$U_q^{\scE}(\mfg)$}{U\_q(g)}}
\label{ssec:Uqg-basic}

In this section we recall the definition and basic properties of the Drinfeld--Jimbo algebra associated to $\mfg$, freely drawing from the standard references as necessary. We refer the reader to \cites{Lusztig-Book,Jantzen-Book,KS-book} or \cite{CPBook} for further details.

In what follows, we fix $\scE$ to be a $\Z$-submodule of the weight lattice $\wlattice$ containing the root lattice $\msQ$. In addition, we employ the standard notation for Gaussian integers and binominal coefficients: If $m,n,r\in \Z$ with $n\geq r\geq 0$, then 
\begin{gather*}
\sbinom{n}{r}_q=\frac{[n]_q!}{[r]_q![n-r]_q!}, \quad [m]_q!=[m]_q[m-1]_q\cdots [1]_q,\\ 
[m]_q=\frac{q^m-q^{-m}}{q-q^{-1}}.
\end{gather*}
\begin{definition}\label{D:Uqg}
 The quantum enveloping algebra $U_q^{\scE}(\mfg)$ is the unital, associative $\Q(q)$-algebra generated by $\{E_i,F_i\}_{i\in \mbI}$ and $\{K_\mu^{\pm 1}\}_{\mu\in \scE}$, subject to the relations: 
\begin{gather*}
K_\mu K_\gamma = K_{\mu+\gamma}, \quad K_0=1, 
\\
K_\mu E_j K_\mu^{-1}=q^{(\mu,\alpha_j)}E_j, \quad K_\mu F_j K_\mu^{-1}=q^{-(\mu,\alpha_j)}F_j,
 \\
[E_i,F_j]=\delta_{ij}\frac{K_{\alpha_i}-K_{-\alpha_i}}{q_i-q_i^{-1}},
\\ 
\sum_{b=0}^{1-a_{ij}}(-1)^b \sbinom{1-a_{ij}}{b}_{q_i} E_i^b E_j E_i^{1-a_{ij}-b}=0,
\\
\sum_{b=0}^{1-a_{ij}}(-1)^b \sbinom{1-a_{ij}}{b}_{q_i} F_i^b F_j F_i^{1-a_{ij}-b}=0.
\end{gather*}
where $q_i=q^{d_i}$ and $i\neq j$ in the last two relations.
\end{definition}
The algebra $U_q^\scE(\mfg)$ is a Hopf algebra over $\Q(q)$ with coproduct $\Delta$, antipode $S$, and counit $\veps$ determined by
\begin{gather*}
\Delta(K_\mu)=K_\mu \otimes K_\mu, \quad S(K_\mu)=K_{-\mu}, \quad \veps(K_\mu)=1,\\
\Delta(E_i)=E_i\otimes K_i + 1\otimes E_i, \quad S(E_i)=-E_i K_i^{-1}, \quad \veps(E_i)=0,\\
\Delta(F_i)=F_i\otimes 1 + K_i^{-1}\otimes F_i, \quad S(F_i)=-K_iF_i,\quad \veps(F_i)=0,
\end{gather*}
for all $\mu \in \scE$ and $i\in \mbI$, where we have set $K_i:=K_{\alpha_i}$ for each $i\in \mbI$, following the usual conventions. Moreover, $U_q^\scE(\mfg)$ admits a $\msQ$-grading compatible with this Hopf structure, with homogeneous components given by
\begin{equation*}
U_q^\scE(\mfg)_\beta=\{x\in U_q^\scE(\mfg): K_\mu x K_\mu^{-1} =q^{(\beta,\mu)}x \quad \forall \; \mu\in \scE\} \quad \forall \quad \beta \in \msQ.
\end{equation*}

Let $U_q(\mfn^+)$ and $U_q(\mfn^-)$ denote the (graded) subalgebras of $U_q^\scE(\mfg)$ generated by $\{E_i\}_{i\in \mbI}$ and $\{F_i\}_{i\in \mbI}$, respectively, and let $U_q^\scE(\mfh)$ denote the subalgebra generated by $\{K_\mu\}_{\mu\in \scE}$. Similarly, the subalgebra of $U_q^\scE(\mfg)$ generated by $U_q^\scE(\mfh)$ and $U_q(\mfn^\pm)$ is denoted $U_q^\scE(\mfb^\pm)$.
Given this notation, the triangular decomposition for $U_q^\scE(\mfg)$ asserts that multiplication induces a $\Q(q)$-linear isomorphism 
\begin{equation}\label{Uqg:TD}
U_q(\mfn^+)\otimes U_q^\scE(\mfh) \otimes U_q(\mfn^-)\iso U_q^\scE(\mfg).
\end{equation}

To conclude this subsection, we recall two variants of the Chevalley involution on $U_q^\scE(\mfg)$ which shall feature throughout this article. Firstly, $U_q^\scE(\mfg)$ admits an algebra involution $\omega$ uniquely determined by 
\begin{equation}\label{omega}
\omega(K_\mu)=K_{-\mu},\quad \omega(E_i)=-F_i,\quad \omega(F_i)=-E_i \quad \forall\; \mu\in \scE\; \text{ and }\; i\in \mbI. 
\end{equation}
In this article, we shall refer to $\omega$ as the \textit{Chevalley involution} on $U_q^\scE(\mfg)$. 
It satisfies $\omega(U_q(\mfn^\pm)_\beta)=U_q(\mfn^\mp)_{-\beta}$ for all $\beta \in \msQ$, and it provides a Hopf algebra isomorphism $U_q^\scE(\mfg)\iso U_q^\scE(\mfg)^{\mathrm{cop}}$, where $U_q^\scE(\mfg)^{\mathrm{cop}}$ is the co-opposite Hopf algebra to $U_q^\scE(\mfg)$.

%
The second involution of interest to us is the anti-automorphism $\uptau$ of $U_q^\scE(\mfg)$ uniquely determined by the formulas 
\begin{equation}\label{uptau}
\uptau(K_\mu)=K_\mu, \quad \uptau(E_i)=F_i K_i, \quad \uptau(F_i)=K_i^{-1}E_i \quad \forall\; \mu\in \scE\; \text{ and }\; i\in \mbI. 
\end{equation}
It provides a Hopf algebra isomorphism $U_q^\scE(\mfg)\iso U_q^\scE(\mfg)^{\mathrm{op}}$, where $U_q^\scE(\mfg)^{\mathrm{op}}$ is the opposite Hopf algebra to $U_q^\scE(\mfg)$,
and has the property that 
\begin{equation*}
\uptau(U_q(\mfn^+)_\beta)K_\beta^{-1}=U_q(\mfn^-)_{-\beta}  \quad \text{ and }\quad K_\beta \uptau(U_q(\mfn^-)_{-\beta})=U_q(\mfn^+)_\beta
\end{equation*}
for each $\beta\in \msQ_+$. 
\subsection{The representations \texorpdfstring{$V$}{V} and  \texorpdfstring{$\End(V)$}{End(V)}}
\label{ssec:Uqg-reps}

For the remainder of Section \ref{sec:Prelim} we shall narrow our focus to the Drinfeld--Jimbo algebra $U_q^\msQ(\mfg)$, which will henceforth be denoted simply by $U_q(\mfg)$. 

Recall that a $U_q(\mfg)$-module $\EuScript{V}$ is said to be of \textit{type $1$} if it admits a weight space decomposition of the form  $\EuScript{V}=\bigoplus_{\mu \in \wlattice} \EuScript{V}_\mu$ 
with 
\begin{equation*}
\EuScript{V}_\mu=\{v\in \EuScript{V}: K_\beta v= q^{(\mu,\beta)}v \quad \forall \; \beta \in\msQ\}.
\end{equation*}
The category of finite-dimensional type $1$ representations of $U_q(\mfg)$ is semisimple, with the simple modules labeled by dominant integral weights via a highest weight theory. In particular, for any $\gamma\in \wlattice_+$ there is a unique, up to isomorphism, finite-dimensional highest weight module $V(\gamma)$ of type $1$ with the highest weight $\gamma$.  That is, $V(\gamma)$ is generated by a nonzero vector $v\in V(\gamma)$ with the property that 
\begin{equation*}
K_i v=q^{(\gamma,\alpha_i)} v \quad \text{ and }\quad E_i v=0 \quad \forall \quad i\in \mbI. 
\end{equation*}
The representation $V(\gamma)$ is irreducible, and every every finite-dimensional irreducible type $1$ representation is isomorphic to a module of this form. 

As we shall be solely interested in the single nonzero dominant integral weight $\lambda\in \wlattice_+$ fixed in Section \ref{ssec:g}, we shall set 
\begin{equation*}
V:=V(\lambda)
\end{equation*}
 and let
$
\pi:U_q(\mfg)\to \End(V)
$
denote the associated $\Q(q)$-algebra homomorphism. In addition, we will  denote the set of weights of $V$ by $\wlattice_\lambda$:
\begin{equation*}
\wlattice_\lambda:=\{\mu\in \wlattice: V_\mu \neq 0\}\subset \wlattice.  
\end{equation*}

The endomophism space $\End(V)$ can itself be equipped with the structure of a $U_q(\mfg)$-module in the standard Hopf-theoretic way. Namely, the $U_q(\mfg)$-action is determined by the formulae
\begin{gather*}
K_i\cdot X=\pi(K_i)X\pi(K_i)^{-1},\\
E_i\cdot X=[\pi(E_i),X] \pi(K_i)^{-1} \quad \text{and}\quad F_i \cdot X= \pi(F_i)X-\pi(K_i)^{-1}X\pi(K_i)\pi(F_i)
\end{gather*}
for all $i\in \mbI$ and $X\in \End(V)$. This is a type $1$ finite-dimensional representation with set of weights laying in $\msQ$:
\begin{equation*}
\End(V)=\bigoplus_{\beta \in \msQ}\End(V)_\beta.
\end{equation*}
Moreover, this weight space decomposition is compatible with the natural $\Q(q)$-algebra grading on $\End(V)$, in that 
\begin{equation*}
\End(V)_\beta=\{X\in \End(V): X(V_\mu)\subset V_{\mu+\beta} \; \forall\; \mu\in \mfh^\ast\}=\bigoplus_{\mu\in \mfh^\ast}\Hom(V_\mu,V_{\mu+\beta}).
\end{equation*}

Let $t:\End(V)\to \End(V)$ denote the transpose operator with respect to a fixed choice of weight basis of $V$. This is a $\Q(q)$-algebra anti-automorphism of $\End(V)$ satisfying 
\begin{equation}\label{t-reverse}
t(\End(V)_\beta)=\End(V)_{-\beta} \quad \forall \; \beta\in \msQ.
\end{equation}

Let $\GLgr=\prod_{\mu}\mathrm{GL}(V_\mu)$ denote the subgroup of the general linear group $\mathrm{GL}(V)$ on the $\Q(q)$-vector space $V$ consisting of invertible matrices which preserve each weight space of $V$. 
The following lemma provides a compatibility condition between $t$ and the anti-automorphism $\uptau$ defined in \eqref{uptau}. 
\begin{lemma}\label{L:A-def}
There is an invertible matrix $A\in \GLgr$ which is the identity on $V_\lambda$, is symmetric, and satisfies 
\begin{equation*}
A^{-1}\pi(x)^t A=\pi(\uptau(x)) \quad \forall \quad x\in U_q(\mfg).
\end{equation*}
\end{lemma}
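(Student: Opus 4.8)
The plan is to recognize the desired identity as an isomorphism between two $U_q(\mfg)$-module structures on $V$, and then to extract the matrix $A$ as a suitably normalized intertwiner. First I would observe that, since $\uptau$ is an algebra anti-automorphism and the transpose $t$ is an algebra anti-automorphism of $\End(V)$, the composite
\[
\pi^{\uptau}:=t\circ\pi\circ\uptau:U_q(\mfg)\to\End(V),\qquad \pi^{\uptau}(x)=\pi(\uptau(x))^{t},
\]
is a genuine algebra \emph{homomorphism}, and hence endows $V$ with a second $U_q(\mfg)$-module structure. The asserted relation $A^{-1}\pi(x)^{t}A=\pi(\uptau(x))$ is, after transposing, equivalent to the statement that $A$ intertwines $\pi$ and $\pi^{\uptau}$, provided $A$ is symmetric; so the task splits into producing an intertwiner and arranging for it to be symmetric.

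Next I would identify the module $(V,\pi^{\uptau})$. Because $\uptau(K_\mu)=K_\mu$ and $\pi(K_\mu)$ is diagonal in the chosen weight basis, one has $\pi^{\uptau}(K_\mu)=\pi(K_\mu)$, so $(V,\pi^{\uptau})$ is a type $1$ module with exactly the same weight spaces, and therefore the same formal character, as $V=V(\lambda)$. Moreover $\pi^{\uptau}(E_i)=\pi(F_iK_i)^{t}$ lies in $\End(V)_{\alpha_i}$, whence $\pi^{\uptau}(E_i)(V_\lambda)\subset V_{\lambda+\alpha_i}=0$, exhibiting a highest weight vector of weight $\lambda$. By semisimplicity of the category of finite-dimensional type $1$ modules together with the coincidence of characters, $(V,\pi^{\uptau})\cong V(\lambda)=(V,\pi)$. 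I would therefore fix a module isomorphism $A\colon(V,\pi)\to(V,\pi^{\uptau})$, i.e. $A\pi(x)=\pi(\uptau(x))^{t}A$ for all $x$. Since $A$ commutes with $\pi(K_\mu)=\pi^{\uptau}(K_\mu)$, it preserves weight spaces, so $A\in\GLgr$; and since $\dim V_\lambda=1$, I may rescale so that $A|_{V_\lambda}=\id$. By Schur's lemma (the irreducible $V(\lambda)$ has endomorphism algebra $\Q(q)\,\id$) this normalization determines $A$ uniquely.

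The main point, and the step requiring the most care, is symmetry of $A$. Here I would use that $\uptau$ is \emph{involutive}, $\uptau^2=\id$, which one checks directly on the generators. Transposing the intertwining relation gives $\pi(x)^{t}A^{t}=A^{t}\pi(\uptau(x))$, and substituting $x\mapsto\uptau(x)$ then shows that $A^{t}$ is \emph{also} an intertwiner $(V,\pi)\to(V,\pi^{\uptau})$. By the one-dimensionality of the space of such intertwiners, $A^{t}=cA$ for some $c\in\Q(q)$; transposing once more yields $A=c^{2}A$, so $c=\pm1$. Finally, comparing the $(1\times 1)$ blocks on $V_\lambda$, where both $A$ and $A^{t}$ act as $\id$ by the normalization, forces $c=1$, so $A^{t}=A$ is symmetric.

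With $A$ symmetric, weight-preserving, and equal to $\id$ on $V_\lambda$, the transposed intertwining identity $\pi(x)^{t}A=A^{t}\pi(\uptau(x))=A\pi(\uptau(x))$ rearranges to $A^{-1}\pi(x)^{t}A=\pi(\uptau(x))$ for all $x\in U_q(\mfg)$, completing the proof. The only genuinely delicate ingredient is the symmetry argument; the identification of $(V,\pi^{\uptau})$ with $V(\lambda)$ and the membership $A\in\GLgr$ are routine consequences of the weight-space bookkeeping recorded in Section~\ref{ssec:Uqg-reps}.
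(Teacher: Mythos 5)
Your proposal is correct and follows essentially the same route as the paper's own proof: both form the twisted action $t\circ\pi\circ\uptau$, identify it with $V(\lambda)$ via the highest weight vector, take the intertwiner $A$ normalized on $V_\lambda$, and deduce symmetry of $A$ from Schur's lemma applied to $A^t$. Your added details (explicitly checking $A\in\GLgr$, the character comparison, and the $c=\pm1$ normalization step) only flesh out steps the paper treats as immediate.
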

\begin{proof} Let $v\in V_\lambda$ be a nonzero vector (\textit{i.e.,} a highest weight vector). 
Consider the algebra homomorphism
\begin{equation*}
\pi^\sharp:=t\circ \pi \circ \uptau: U_q(\mfg)\to \End(V),
\end{equation*}
and let $V^\sharp$ denote the associated $U_q(\mfg)$-module structure on $V$. Then, by \eqref{uptau} and \eqref{t-reverse}, $v$ is a also a highest weight vector of weight $\lambda$ in $V^\sharp$. Indeed, we clearly have have $\pi^\sharp(K_i)v=q^{(\alpha_i,\lambda)}v$ for all $i\in \mbI$, so it suffices to show that $\pi^\sharp(E_i)v=0$ for all $i\in \mbI$. By definition of $\pi^\sharp$, we have 
\begin{equation*}
\pi^\sharp(E_i)v
=
\pi(F_iK_i)^tv=\pi(K_i) (\pi(F_i)^t v).
\end{equation*}
As $\pi(F_i)\in \End(V)_{-\alpha_i}$, the property \eqref{t-reverse} implies that $\pi(F_i)^t v \in V_{\lambda+\alpha_i}=0$, as desired. 
It now follows immediately that $V$ and $V^\sharp$ are isomorphic $U_q(\mfg)$-modules. Let 
$
A:V\iso V^\sharp
$
be an isomorphism satisfying $A(v)=v$. Since $A(\pi(x)w)=\pi^\sharp(x) (A\cdot w)=\pi(\uptau(x))^t Aw $ for all $w\in V$, $A$ satisfies 
\begin{equation*}
A\pi(x)A^{-1}=\pi(\uptau(x))^t \quad \forall \; x\in U_q(\mfg). 
\end{equation*}
As $\uptau$ is an involution, this is equivalent to main identity of the lemma. To complete the proof of the lemma, it remains to see that $A$ is symmetric. To this end, note that $A^t$ satisfies 
\begin{equation*}
A^t(\pi(x)w)=\left( \pi(x)^t A\right)^t w=\left(A\pi(\uptau(x))\right)^tw=\pi(\uptau(x))^t A^tw
\end{equation*}
for all $w\in V$ and $x\in U_q(\mfg)$, and is therefore an isomorphism of modules $V\to V^\sharp$. Since $V$ and $V^\sharp$ are simple and $A$ and $A^t$ both fix the highest weight vector $v$, we can conclude that $A=A^t$. 
\end{proof}

Next, let $\langle\,,\,\rangle=\mathsf{tr}\circ m:\End(V)\otimes \End(V)\to \Q(q)$ denote the trace form on $\End(V)$, where $m$ is the usual multiplication on $\End(V)$, given by composition. This is a non-degenerate, symmetric bilinear form. In addition, if we view $\Q(q)$ as a $\msQ$-graded vector space, concentrated in degree  zero, then $\langle\,,\, \rangle$ is $\msQ$-graded:
\begin{equation*}
\langle \End(V)_\alpha,\End(V)_\beta\rangle \subset \Q(q)_{\alpha+\beta} \quad \forall \; \alpha,\beta\in \msQ.
\end{equation*}
In addition, since $\langle\,,\rangle \circ (t\otimes t) =\langle\,,\rangle$, Lemma \ref{L:A-def} implies that
\begin{equation*}
\langle \pi(x),\pi(y)\rangle =\langle \pi(\uptau(x)),\pi(\uptau(y))\rangle \quad \forall\; x,y\in U_q(\mfg). 
\end{equation*}
%

\subsection{\texorpdfstring{$R$}{R}-matrices}
\label{ssec:Uqg-R}
We now turn towards recalling the construction and basic properties of the universal quasi $R$-matrix $\mathscr{R}^+$ of $U_q(\mfg)$ and how it gives rise to a genuine $R$-matrix $\mrR\in \End(V)^{\otimes 2}$.

To begin, we recall that there is an algebra automorphism $\Uppsi$ of $U_q(\mfg)^{\otimes 2}$ uniquely determined by the formulas
\begin{gather*}
\Uppsi(K_i\otimes 1)=K_i\otimes 1, \quad \Uppsi(1\otimes K_i)=1\otimes K_i,\\
\Uppsi(X_i^\pm\otimes 1)=X_i^\pm \otimes K_i^{\mp 1} \quad \text{ and }\quad \Uppsi(1\otimes X_i^\pm)=K_i^{\mp 1}\otimes  X_i^\pm
\end{gather*}
for all $i\in \mbI$, where $X_i^+=E_i$ and $X_i^-=F_i$. Informally, it is just given by $\Uppsi(x)=\mathrm{Ad}(q^{-\Omega_\mfh})$ where $\Omega_\mfh\in \mfh\otimes \mfh$ is the canonical tensor associated to the restriction of the Killing form to $\mfh\otimes \mfh$. This automorphism has the property that 
\begin{equation}\label{D-Psi}
\pi^{\otimes 2}(\Uppsi(x))=\Rd^{-1} (\pi^{\otimes 2}(x))\Rd  \quad \forall \; x\in U_q(\mfg)^{\otimes 2}
\end{equation}
where $\Rd$ is the diagonal operator 
\begin{equation*}
\Rd:=\sum_{\mu,\gamma\in \wlattice_\lambda} q^{(\mu,\gamma)-(\lambda,\lambda)}\mathrm{Id}_{V_\mu\otimes V_\gamma} \in \mathrm{GL}^\mfh(V^{\otimes 2})\subset \End(V^{\otimes 2}).
\end{equation*}
Here we note that, since $\wlattice_\lambda\subset\lambda+\msQ$, the scalar $(\mu,\gamma)-(\lambda,\lambda)$ belongs to $\Z$. 

With the above preliminaries at our disposal, we are now in a position to recall the definition of $\mathscr{R}^+$. 
By \cite{Lusztig-Book}*{Thm.~4.1.2}, there is a unique family of elements
\begin{equation*}
\mathscr{R}_\nu^+\in U_q(\mfn^+)_\nu\otimes U_q(\mfn^-)_{-\nu},
\end{equation*}
where $\nu$ takes values in $\msQ_+$,
satisfying $\mathscr{R}_0^+=1$ in addition to 
\begin{equation*}
\sum_{\nu\in \msQ_+^\beta}(\mathscr{R}_\nu^+ \Delta(x)_{\beta-\nu}-\Delta_\Uppsi^{\mathrm{op}}(x)_{\beta-\nu}\mathscr{R}_\nu^+ )=0=\sum_{\nu\in \msQ_+^\beta} (\mathscr{R}_\nu^+ \Delta(y)^{\nu-\beta}-\Delta_\Uppsi^{\mathrm{op}}(y)^{\nu-\beta}\mathscr{R}_\nu^+ )
\end{equation*}
for each $\beta\in \msQ_+$, $x\in U_q(\mfb^+)$ and $y\in U_q(\mfb^-)$. 
Here we have set $\Delta_\Uppsi:=\Uppsi\circ \Delta$ and made use of the following general notation: For a $\msQ$-graded vector space $\mathscr{U}$ and an element $u\in \mathscr{U}^{\otimes 2}$, we define 
\begin{equation*}
u_\gamma:=(\mathbf{1}_\gamma\otimes\mathrm{Id})(u)\quad\text{ and }\quad u^\gamma:=(\mathrm{Id}\otimes \mathbf{1}_\gamma)(u),
\end{equation*}
 where $\mathbf{1}_\gamma:\mathscr{U}\onto \mathscr{U}_\gamma$ is the idempotent associated to $\gamma$.
 Though it will not be important for us to have access to an explicit formula for $\mathscr{R}^+_\nu$ for arbitrary $\nu\in \msQ_+$, we shall frequently apply the fact that
\begin{equation*}
\mathscr{R}_{\alpha_i}^+=(q_i-q_i^{-1})E_i\otimes F_i \quad \forall \; i\in \mbI.
\end{equation*}

The defining identities of $\mathscr{R}_\nu^+$ can be expressed more compactly as
\begin{equation}\label{R^+}
\mathscr{R}^+ \Delta(x)=\Delta_\Uppsi^{\mathrm{op}}(x)\,\mathscr{R}^+ \quad \forall \quad x\in U_q(\mfg),
\end{equation}
in a suitable completion of $U_q(\mfg)^{\otimes 2}$ containing  $\mathscr{R}^+:=\sum_{\nu\in \msQ_+}\mathscr{R}_\nu^+$. We refer the reader to \cite{Lusztig-Book}*{\S4.1.1} and \cite{CPBook}*{\S10.1.D} for a more detailed discussion of the relevant completion of $U_q(\mfg)^{\otimes 2}$ while noting that the element $\mathscr{R}^+$ is  related to Lusztig's \textit{quasi $R$-matrix} $\Theta$ by $\mathscr{R}^+:=\Theta_{21}^{-1}$.  

Crucially, if $\EuScript{V}$ is any finite-dimensional $U_q(\mfg)$-module and $\pi_\EuScript{V}:U_q(\mfg)\to \End(\EuScript{V})$ is the associated homomorphism, then $(\pi_\EuScript{V}\otimes \mathrm{Id})(\mathscr{R}^+_\nu)$ and $(\mathrm{Id}\otimes \pi_\EuScript{V})(\mathscr{R}^+_\nu)$ are zero whenever $\nu$ is not one of the finitely many weights of the $U_q(\mfg)$-module $\End(\EuScript{V})$. Thus, the evaluations $(\pi_\EuScript{V}\otimes \mathrm{Id})(\mathscr{R}^+)$ and $(\mathrm{Id}\otimes \pi_\EuScript{V})(\mathscr{R}^+)$ are well-defined elements of $\End(\EuScript{V})\otimes U_q(\mfg)$ and 
$U_q(\mfg)\otimes \End(\EuScript{V})$, respectively.  We set 
\begin{equation}\label{R:def}
\mrR:= \Rd \cdot \pi^{\otimes 2}(\mathscr{R}^+)\in \End(V \otimes V).
\end{equation}
By \eqref{D-Psi} and \eqref{R^+}, this element satisfies the relation 
\begin{equation*}
\mrR\, \pi^{\otimes 2}(\Delta(x))=\pi^{\otimes 2}(\Delta^{\mathrm{op}}(x))\, \mrR
\quad \forall \quad x\in U_q(\mfg).
\end{equation*}
Moreover, $\mrR$ is an $R$-matrix: it satisfies the quantum Yang--Baxter equation 
\begin{equation*}
\mrR_{12}\mrR_{13}\mrR_{23}=\mrR_{23}\mrR_{13}\mrR_{12}\quad \text{ in }\quad \End(V)^{\otimes 3}\cong \End(V^{\otimes 3}).
\end{equation*}
This is a consequence of \eqref{D-Psi}, \eqref{R^+} and that  $\mathscr{R}^+$ satisfies the coproduct identities
\begin{equation}\label{R^+:Hopf}
\begin{aligned}
(\Delta\otimes \mathrm{Id})(\mathscr{R}^+)&=(\mathrm{Id}\otimes \Uppsi)\left(\mathscr{R}^+_{13}\right)  \cdot \mathscr{R}^+_{23},\\
(\mathrm{Id}\otimes \Delta)(\mathscr{R}^+)&=(\Uppsi\otimes \mathrm{Id})\left(\mathscr{R}^+_{13}\right)  \cdot \mathscr{R}^+_{12},
\end{aligned}
\end{equation}
where we have used the standard double subscript notation to indicate how to embed elements of the tensor square of a $\Q(q)$-vector space $\mathscr{U}$ into $\mathscr{U}^{\otimes n}$ for $n\geq 2$.

To conclude this subsection, we explain how $\mrR$ transforms when the transpose $t$ from Section \ref{ssec:Uqg-reps} is applied to both of its tensor factors. This will be applied in Sections \ref{sec:URg} and \ref{sec:URg=Uq}. 
\begin{lemma}\label{L:R^t}
Let  $A\in\GLgr$ be as in Lemma \ref{L:A-def}. Then $\mrR$ satisfies
\begin{equation*}
\mrR^{t_1,t_2}=\mathrm{Ad}(A\otimes A)\left(\mrR_{21}\right),
\end{equation*}
where $\mrR^{t_1,t_2}=(t\otimes t)(\mrR)$. 
\end{lemma}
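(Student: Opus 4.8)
The plan is to peel off the diagonal factor $\Rd$ and reduce everything to a single symmetry of the quasi $R$-matrix $\mathscr{R}^+$, which I will then establish through the uniqueness characterization of $\mathscr{R}^+$. First I would record that $t\otimes t$ is an algebra anti-automorphism of $\End(V)^{\otimes 2}=\End(V^{\otimes 2})$ fixing $\Rd$ (as $\Rd$ is diagonal, hence symmetric, in a weight basis), so that from $\mrR=\Rd\cdot\pi^{\otimes 2}(\mathscr{R}^+)$ one gets $\mrR^{t_1,t_2}=(t\otimes t)\big(\pi^{\otimes 2}(\mathscr{R}^+)\big)\,\Rd$. Applying Lemma \ref{L:A-def} in each tensor factor, namely $\pi(x)^t=A\pi(\uptau(x))A^{-1}$, turns this into $\mathrm{Ad}(A\otimes A)\big(\pi^{\otimes 2}((\uptau\otimes\uptau)(\mathscr{R}^+))\big)\,\Rd$. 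Since $A\in\GLgr$ is weight preserving it commutes with $\Rd$, and for the same reasons $\mrR_{21}=\Rd\cdot\pi^{\otimes 2}(\mathscr{R}^+_{21})$, where $\mathscr{R}^+_{21}$ denotes the flip of $\mathscr{R}^+$. Using \eqref{D-Psi} in the form $\pi^{\otimes 2}(\Uppsi^{-1}(z))=\Rd\,\pi^{\otimes 2}(z)\,\Rd^{-1}$, a short rearrangement in which the $\Rd$-factors telescope shows that the assertion $\mrR^{t_1,t_2}=\mathrm{Ad}(A\otimes A)(\mrR_{21})$ follows from the single universal identity
\[
(\uptau\otimes\uptau)(\mathscr{R}^+)=\Uppsi^{-1}\!\big(\mathscr{R}^+_{21}\big).\qquad(\star)
\]

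To prove $(\star)$ I would appeal to the uniqueness of $\mathscr{R}^+$ as the only solution of \eqref{R^+} whose $\nu$-component lies in $U_q(\mfn^+)_\nu\otimes U_q(\mfn^-)_{-\nu}$ and whose $0$-component is $1$. The key structural input is the compatibility
\[
(\uptau\otimes\uptau)\circ\Uppsi=\Uppsi^{-1}\circ(\uptau\otimes\uptau),
\]
which I would verify by direct evaluation on the generators $X_i^\pm\otimes 1$, $1\otimes X_i^\pm$ and $K_\mu\otimes 1$, together with the facts that the flip commutes with both $\Uppsi$ and $\uptau\otimes\uptau$, and that $(\uptau\otimes\uptau)\circ\Delta=\Delta\circ\uptau$ (the statement that $\uptau$ is a Hopf map to the opposite algebra). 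Applying the anti-automorphism $\uptau\otimes\uptau$ to the defining relation $\mathscr{R}^+\Delta(x)=\Delta_{\Uppsi}^{\mathrm{op}}(x)\,\mathscr{R}^+$, and then the automorphisms $\Uppsi$ and the flip, these compatibilities transform the relation back into its original shape, now satisfied by $\mathscr{S}:=\big(\Uppsi((\uptau\otimes\uptau)(\mathscr{R}^+))\big)_{21}$; that is, $\mathscr{S}\,\Delta(x)=\Delta_{\Uppsi}^{\mathrm{op}}(x)\,\mathscr{S}$ for all $x\in U_q(\mfg)$. A weight bookkeeping, using $\uptau(U_q(\mfn^+)_\nu)=U_q(\mfn^-)_{-\nu}K_\nu$ (and the dual statement) together with the $K$-shifts produced by $\Uppsi$, shows that $\mathscr{S}_\nu\in U_q(\mfn^+)_\nu\otimes U_q(\mfn^-)_{-\nu}$ with $\mathscr{S}_0=1$. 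Uniqueness then forces $\mathscr{S}=\mathscr{R}^+$, which upon unwinding the flip and $\Uppsi$ is exactly $(\star)$.

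The main obstacle is the careful handling of the two interlocking automorphisms in the middle step: establishing the compatibility $(\uptau\otimes\uptau)\circ\Uppsi=\Uppsi^{-1}\circ(\uptau\otimes\uptau)$, and then verifying that conjugating the defining relation by $\uptau\otimes\uptau$, $\Uppsi$ and the flip returns it \emph{precisely} to the form \eqref{R^+} rather than to a genuinely different intertwining relation; the grading bookkeeping required to legitimately invoke the uniqueness of $\mathscr{R}^+$ is the second delicate point. As a consistency check one can verify $(\star)$ on the lowest component $\mathscr{R}^+_{\alpha_i}=(q_i-q_i^{-1})E_i\otimes F_i$, where both sides equal $(q_i-q_i^{-1})F_iK_i\otimes K_i^{-1}E_i$. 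Conceptually, $(\star)$ is the quasi $R$-matrix shadow of the standard symmetry $(\uptau\otimes\uptau)(\mathscr{R})=\mathscr{R}_{21}$ of the genuine universal $R$-matrix $\mathscr{R}$, which follows from $\uptau=\omega\circ S$ together with $(S\otimes S)(\mathscr{R})=\mathscr{R}$ and $(\omega\otimes\omega)(\mathscr{R})=\mathscr{R}_{21}$; however, since $\mathscr{R}$ lives only in a completion, I would carry out the argument at the level of $\mathscr{R}^+$ as above to keep the proof self-contained.
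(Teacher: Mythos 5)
Your proposal is correct and follows essentially the same route as the paper: both reduce the lemma to the identity $(\uptau\otimes\uptau)(\mathscr{R}^+)=\Uppsi^{-1}(\mathscr{R}^+_{21})$, prove it by checking that $\Uppsi\big((\uptau\otimes\uptau)(\mathscr{R}^+_{21})\big)$ satisfies \eqref{R^+} together with the normalization and grading conditions and invoking uniqueness, and then conclude via Lemma \ref{L:A-def} and the fact that $A\otimes A$ commutes with $\Rd$. The only differences are presentational (you do the matrix reduction first and state the compatibility $(\uptau\otimes\uptau)\circ\Uppsi=\Uppsi^{-1}\circ(\uptau\otimes\uptau)$ explicitly, which the paper leaves implicit in "by the definitions of $\Uppsi$ and $\uptau$").
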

\begin{proof}
We first note that $(\uptau\otimes \uptau)(\mathscr{R}^+_\nu)=\left(\Uppsi^{-1}(\mathscr{R}_{21}^+)\right)_\nu$ for each $\nu\in \msQ_+$, where $\uptau$ is the anti-automorphism defined in \eqref{uptau}. Equivalently, one has
\begin{equation}\label{R+:uptau}
(\uptau\otimes \uptau)(\mathscr{R}^+):=\Uppsi^{-1}(\mathscr{R}^+_{21}).
\end{equation}
This is essentially a consequence of  Proposition 4.2 of \cite{Dr-almost}, where it was shown that the universal $R$-matrix $\mathscr{R}_{U_\hbar\mfg}$ of $U_\hbar\mfg$ satisfies $(\mathscr{R}_{U_\hbar\mfg})_{21}=\uptau^{\otimes 2}(\mathscr{R}_{U_\hbar\mfg})$. To prove this rigorously in the present setting, where we do not have access to $\mathscr{R}_{U_\hbar\mfg}$, one first observes that
$\Psi(\uptau^{\otimes 2}(\mathscr{R}^+_{21}))$ satisfies the equation \eqref{R^+} and, by the definitions of $\Uppsi$ and $\uptau$, is such that $\Psi(\uptau^{\otimes 2}(\mathscr{R}^+_{21}))_0=1$ and 
\begin{equation*}
\Psi(\uptau^{\otimes 2}(\mathscr{R}^+_{21}))_\nu \in U_q(\mfn^+)_\nu\otimes U_q(\mfn^-)_{-\nu} \quad \forall \; \nu\in \msQ_+.
\end{equation*}
It follows by uniqueness that \eqref{R+:uptau} holds. Combining this with \eqref{D-Psi}, we obtain 
\begin{equation} \label{pi-R-tau}
 (\pi\circ \uptau)^{\otimes 2}(\mathscr{R}^+)=\Rd \left(\pi^{\otimes 2}(\mathscr{R}_{21}^+)\right)\Rd^{-1}.
\end{equation}
The assertion of the lemma now follows from the relation $A^{-1}\pi(x)^t A=\pi(\uptau(x))$ established in Lemma \ref{L:A-def}. Indeed, we have 
\begin{align*}
\mrR^{t_1,t_2}=\pi^{\otimes 2}(\mathscr{R}^+)^{t_1,t_2}\Rd&=A_1A_2 (\pi\circ\uptau)^{\otimes 2}(\mathscr{R}^+) A_2^{-1} A_1^{-1} \Rd\\
&
=
A_1A_2 \Rd \pi^{\otimes 2}(\mathscr{R}^+_{21})\Rd^{-1} A_2^{-1} A_1^{-1} \Rd\\
&
=A_1A_2 \mathrm{R}_{21} \Rd^{-1} A_2^{-1} A_1^{-1} \Rd, 
\end{align*}
where $A_1=A\otimes I$ and $A_2=I\otimes A$. 
Since $A_1A_2$ preserves weight spaces, it commutes with $\Rd$, so the right-hand side above is really just $A_1A_2 \mrR_{21} A_2^{-1} A_1^{-1}$, as desired. \qedhere
\end{proof}

\section{The quantum enveloping algebra \texorpdfstring{$\mathbf{U}_q^\lambda(\mfg)$}{U\_q\^{}lambda(g)}}\label{sec:Uqlambda}

As indicated in Section \ref{ssec:results}, there exists a quantum group $\bUqg$ which contains $U_q(\mfg)$ as a subalgebra, admits the Chevalley--Serre presentation spelled out in Theorem \ref{T:intro}, and which can be naturally reconstructed from the solution \eqref{R:def} of the quantum Yang--Baxter equation. In this section, we introduce $\bUqg$ starting from the quantized enveloping algebra $U_q^\EuScript{E}(\mfg)$ associated to the lattice $\EuScript{E}=\wlattice(\lambda)$, defined in Section \ref{ssec:Lattice} below, which is generated by the set of weights of $V$.
\subsection{The lattice associated to a dominant integral weight}
\label{ssec:Lattice}

Recall that $\lambda$ is a fixed nonzero dominant integral weight of $\mfg$, and that $V$ is the associated finite-dimensional irreducible $U_q(\mfg)$-module with highest weight $\lambda$. 
Let $\wlattice(\lambda)$ denote the $\Z$-submodule of $\wlattice$ generated by the set of weights $\wlattice_\lambda$: 
\begin{equation*}
\wlattice(\lambda):=\Z \wlattice_\lambda= \sum_{\mu \in \wlattice_\lambda}\Z \mu.
\end{equation*}
The following elementary lemma establishes some basic properties of $\wlattice(\lambda)$.
\begin{lemma}\label{L:lattice-V}
For each $i\in \mbI$, there exists  $\mu,\gamma\in \wlattice_\lambda$ such that $\alpha_i=\mu-\gamma$. Consequently, one has $\msQ\subset \wlattice(\lambda)\subset \wlattice$. Moreover, the image $[\mu]$ of any weight $\mu$ in $\wlattice(\lambda)/\msQ$ coincides with $[\lambda]$ and generates the group $\wlattice(\lambda)/\msQ$. In particular, one has
\begin{equation*}
\wlattice(\lambda)/\msQ\cong \Z/\olambda \Z.
\end{equation*}
\end{lemma}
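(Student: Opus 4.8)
The plan is to reduce the whole statement to its first assertion, that each simple root $\alpha_i$ is a difference of two weights of $V$; the lattice inclusions and the quotient computation then follow formally. To produce such a difference for a fixed $i\in\mbI$, I would first show that some weight $\mu\in\wlattice_\lambda$ satisfies $\langle\mu,\alpha_i^\vee\rangle\neq0$, where $\langle\mu,\alpha_i^\vee\rangle:=2(\mu,\alpha_i)/(\alpha_i,\alpha_i)\in\Z$. Since the weight set $\wlattice_\lambda$ is stable under the Weyl group $W$ and $s_i\mu=\mu-\langle\mu,\alpha_i^\vee\rangle\alpha_i$, I may replace $\mu$ by $s_i\mu$ if necessary to arrange $\langle\mu,\alpha_i^\vee\rangle>0$. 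The standard $\mfsl_2$-string theory for type $1$ finite-dimensional modules then guarantees that the $\alpha_i$-string through $\mu$ is unbroken on the lower side, so that $\mu-\alpha_i\in\wlattice_\lambda$; this gives $\alpha_i=\mu-(\mu-\alpha_i)$ with both terms weights of $V$, as desired.

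The only real content, and the step I expect to be the main obstacle, is ruling out the degenerate possibility that $\langle\mu,\alpha_i^\vee\rangle=0$ for every $\mu\in\wlattice_\lambda$ (equivalently, that the $i$-th copy of $\mfsl_2$ acts trivially on $V$). Such vanishing would mean that $\alpha_i^\vee$ annihilates $\wlattice_\lambda$, hence its entire rational span $\mfh^\ast_\Q:=\wlattice\otimes_\Z\Q$. The cleanest way to exclude this is to invoke the irreducibility of the reflection representation of $W$ on $\mfh^\ast_\Q$, which holds because $\mfg$ is simple: as $\lambda\neq0$, the orbit $W\lambda\subseteq\wlattice_\lambda$ spans $\mfh^\ast_\Q$, so no nonzero coroot can annihilate $\wlattice_\lambda$. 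Alternatively, and more self-containedly, one may argue by connectedness of the Dynkin diagram: choose $j$ with $(\lambda,\alpha_j)>0$, so that $F_j$ does not annihilate the highest weight vector and node $j$ is nondegenerate, and then propagate nondegeneracy along edges, using that a weight $\mu$ with $\langle\mu,\alpha_j^\vee\rangle>0$ produces the weight $\mu-\alpha_j$, which satisfies $\langle\mu-\alpha_j,\alpha_k^\vee\rangle=-a_{kj}\neq0$ for any $k$ adjacent to $j$ at which degeneracy is assumed.

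With the first assertion established, the rest is bookkeeping. Each $\alpha_i$, being a difference of two elements of $\wlattice_\lambda$, lies in $\wlattice(\lambda)=\Z\wlattice_\lambda$, so $\msQ\subseteq\wlattice(\lambda)$; and $\wlattice(\lambda)\subseteq\wlattice$ since every weight of a type $1$ module is integral. For the quotient, I would use that $V$ is a highest weight module, so every $\mu\in\wlattice_\lambda$ satisfies $\lambda-\mu\in\msQ_+\subseteq\msQ$, whence $[\mu]=[\lambda]$ in $\wlattice/\msQ$. Because $\wlattice(\lambda)$ is generated by $\wlattice_\lambda$ and contains $\msQ$, the image of $\wlattice(\lambda)$ in $\wlattice/\msQ$ — which is exactly $\wlattice(\lambda)/\msQ$ — is the cyclic subgroup generated by the single element $[\lambda]$. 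Its order is $\olambda$ by definition, so $\wlattice(\lambda)/\msQ=\langle[\lambda]\rangle\cong\Z/\olambda\Z$, completing the argument.
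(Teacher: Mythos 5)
Your proof is correct, but it takes a genuinely different route from the paper's. The paper transfers the problem to classical Lie theory: invoking the coincidence of weights and multiplicities between $V(\lambda)$ and the irreducible module $\mathbb{V}(\lambda)$ over the complex simple Lie algebra $\mfg$, it argues that if $\alpha_i$ were never a difference of two weights, then the root space $\mfg_{\alpha_i}$ would map every weight space of $\mathbb{V}(\lambda)$ to zero, contradicting the faithfulness of $\mathbb{V}(\lambda)$ (its kernel is a proper ideal of the simple algebra $\mfg$, hence zero). You instead work entirely with the weight set: Weyl-group stability plus the $\mfsl_2$-string property produce $\mu-\alpha_i\in\wlattice_\lambda$ once some weight pairs nontrivially with $\alpha_i^\vee$, and total degeneracy at node $i$ is excluded either by irreducibility of the reflection representation (so the orbit $W\lambda$ of the nonzero weight $\lambda$ spans $\mfh^\ast$ over $\Q$) or by propagating nondegeneracy along the connected Dynkin diagram. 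Both arguments use simplicity of $\mfg$ in an essential way, but through different avatars: the paper through the ideal-theoretic faithfulness statement, you through irreducibility of the root system. What your approach buys is that it is purely root- and weight-combinatorial --- the Dynkin-propagation variant in particular needs nothing beyond weight strings and connectedness, and can be run directly on the quantum module granted the standard $W$-stability of its weight set --- whereas the paper's argument is a one-line contradiction once faithfulness and the classical--quantum weight coincidence are granted. The concluding bookkeeping (the inclusions $\msQ\subset\wlattice(\lambda)\subset\wlattice$, the identity $[\mu]=[\lambda]$ via $\wlattice_\lambda\subset\lambda+\msQ$, and the cyclic quotient of order $\olambda$) is identical in both proofs.
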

\begin{proof}
Let $\mathbb{V}(\lambda)$ denote the finite-dimensional irreducible representation of the complex simple Lie algebra $\mfg$ with the highest weight $\lambda$. Since $\lambda$ is nonzero and $\mfg$ is simple, this is a faithful representation. Moreover, the set of weights  of $\mathbb{V}(\lambda)$ is identical to the set of weights of $V(\lambda)$, with $\dim_\C \mathbb{V}(\lambda)_\gamma=\dim_{\Q(q)}V(\lambda)_\gamma$ for any weight $\gamma$; see \cite{Lusztig-Book}*{Thm.~33.1.3} or \cite{Jantzen-Book}*{Thm.~5.15}, for instance. The assertion of the lemma now follows from a standard exercise in Lie theory; see Exercise 5 of \cite{Humphreys-book}*{\S21.4}, for instance. As the argument is brief, we include it for the sake of completeness.

Suppose towards a contradiction that there does not exist a pair $\mu,\gamma\in \wlattice_\lambda$ as in the statement of the lemma. Since the $\alpha_i$-root space $ \mfg_{\alpha_i}$ satisfies $\mfg_{\alpha_i}\cdot \mathbb{V}(\lambda)_\gamma\subset \mathbb{V}(\lambda)_{\gamma+\alpha_i}$ for all $\gamma\in \wlattice_\lambda$ and $\gamma$ and $\gamma+\alpha_i$ are not both weights, $\mfg_{\alpha_i}$ annihilates all of $\mathbb{V}(\lambda)$. Since $\mathbb{V}(\lambda)$ is faithful, this is impossible. 

The above shows that $\msQ\subset \wlattice(\lambda)$. The remaining statements of the lemma now follow from the fact that $\wlattice(\lambda)=\Z\wlattice_\lambda$ and  $\wlattice_\lambda\subset \lambda+\msQ$. \qedhere 
\end{proof}
%
%

\subsection{The quantum algebra \texorpdfstring{$\mathbf{U}_q^\lambda(\mfg)$}{U\_q\^{}lambda(g)}}
\label{ssec:bUqg}

Henceforth, we shall write $U_q^\lambda(\mfg)$ for the Hopf algebra $U_q^{\scE}(\mfg)$ associated to the lattice $\scE=\wlattice(\lambda)$ from Section \ref{ssec:Lattice}. We are now prepared to introduce the algebraic structure at the heart of Section \ref{sec:Uqlambda}. 
\begin{definition}\label{D:bUqg}
We define $\bUqg$ to be the subalgebra of the $\Q(q)$-algebra 
\begin{equation*}
U_q^{\lambda}(\mfg)\otimes \Q(q)[v^{\pm 1}]
\end{equation*}
 generated by $\{E_i,F_i\}_{i\in \mbI}$ and $\{K_\mu^{\pm 1} v^{\mp 1}\}_{\mu\in \wlattice_{\lambda}}$.
\end{definition}
There is a unique $\msQ$-graded Hopf algebra structure on $U_q^\lambda(\mfg)\otimes \Q(q)[v^{\pm 1}]$ for which $v$ is a grouplike element of degree zero and the natural embedding $U_q^\lambda(\mfg)\into U_q^\lambda(\mfg)\otimes \Q(q)[v^{\pm 1}]$ is a morphism of graded Hopf algebras. Since for each weight $\mu\in \wlattice_\lambda$ of $V$ one has
\begin{equation*}
\Delta(K_\mu v^{-1})=K_\mu  v^{-1} \otimes K_\mu  v^{-1}, \quad \veps(K_\mu v^{-1})=1, \quad S(K_\mu v^{-1})=v K_\mu^{-1},
\end{equation*}
$\bUqg$ is a $\msQ$-graded Hopf subalgebra of $U_q^\lambda(\mfg)\otimes \Q(q)[v^{\pm 1}]$. In particular, one has the decomposition
\begin{equation*}
\bUqg=\bigoplus_{\beta \in \msQ} \bUqg_\beta,
\end{equation*}
where $\bUqg_\beta=\bUqg\cap( U_q^\lambda(\mfg)_\beta\otimes \Q(q)[v^{\pm 1}])$ for each $\beta\in \msQ$.

The following result shows that $\bUqg$ can be presented in terms of generators and relations by adding a single invertible element to the the realization of $U_q(\mfg)$ provided by Definition \ref{D:Uqg}.
\begin{proposition}\label{P:gen-rels}
$\bUqg$ is isomorphic to the unital, associative $\Q(q)$-algebra $\dot\mbU_q^\lambda(\mfg)$  generated by $\{\upxi_i^\pm,x_i^\pm\}_{i\in \mbI}\cup\{\upxi_\lambda^\pm\}$, subject to the following list of relations:
\begin{gather}
\upxi_i^\pm\upxi_i^\mp=1=\upxi_\lambda^\pm\upxi_\lambda^\mp, \quad[\upxi_i^\pm, \upxi_j^\pm]=0=[\upxi_i^\pm,\upxi_\lambda^\pm],
\label{dot:1}\\
\upxi_i^+ x_j^\pm \upxi_i^-=q^{\pm (\alpha_i,\alpha_j)}x_j^\pm, \quad \upxi_\lambda^+ x_j^\pm \upxi_\lambda^-=q^{\pm (\lambda,\alpha_j)}x_j^\pm,
\label{dot:2}\\
[x_i^+,x_j^-]=\delta_{ij}\frac{\upxi_i^+-\upxi_i^-}{q_i-q_i^{-1}},
\label{dot:3}\\
\sum_{b=0}^{1-a_{ij}}(-1)^b \sbinom{1-a_{ij}}{b}_{q_i} (x_i^\pm)^b x_j^\pm (x_i^\pm)^{1-a_{ij}-b}=0,
\label{dot:4}
\end{gather}
where $q_i=q^{d_i}$ and $i\neq j$ in the last relation.  Explicitly, the assignment
\begin{equation*}
x_i^+\mapsto E_i, \quad x_i^-\mapsto F_i,\quad  \upxi_i^\pm \mapsto K_i^{\pm 1},\quad  \upxi_\lambda^\pm \mapsto K_\lambda^{\pm 1} v^{\mp 1}
\end{equation*}
uniquely extends to an isomorphism of $\Q(q)$-algebras $\varphi:\dot\mbU_q^\lambda(\mfg)\iso \mbU_q^\lambda(\mfg)$. 
\end{proposition}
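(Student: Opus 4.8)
The plan is to check that the assignment $\varphi$ respects the defining relations \eqref{dot:1}--\eqref{dot:4}, so that it extends to an algebra homomorphism $\varphi\colon\dot\mbU_q^\lambda(\mfg)\to\bUqg$, and then to prove that $\varphi$ is bijective. Well-definedness is routine: under $\varphi$ each of \eqref{dot:1}--\eqref{dot:4} becomes one of the defining relations of $U_q^\lambda(\mfg)$, using that $v$ is central and grouplike, so for instance $(K_\lambda v^{-1})E_j(K_\lambda v^{-1})^{-1}=K_\lambda E_j K_\lambda^{-1}=q^{(\lambda,\alpha_j)}E_j$ reproduces the second relation in \eqref{dot:2}. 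Surjectivity uses Lemma \ref{L:lattice-V}: every weight $\mu\in\wlattice_\lambda$ satisfies $\mu-\lambda\in\msQ$, say $\mu=\lambda+\sum_i n_i\alpha_i$, whence $K_\mu v^{-1}=(K_\lambda v^{-1})\prod_i K_i^{n_i}$ lies in the image; together with $E_i,F_i$ this shows the image contains all the algebra generators of $\bUqg$.

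The substance of the proof is injectivity, which I would obtain by comparing triangular decompositions. Let $\dot\mbU^+,\dot\mbU^-,\dot\mbU^0$ be the subalgebras of $\dot\mbU_q^\lambda(\mfg)$ generated by $\{x_i^+\}$, $\{x_i^-\}$, and $\{\upxi_i^\pm,\upxi_\lambda^\pm\}$, respectively. Straightening words as in the classical case --- moving Cartan generators to the right past $\dot\mbU^\pm$ via \eqref{dot:2}, and rewriting $x_i^-x_j^+$ via \eqref{dot:3} with the correction term landing in $\dot\mbU^0$ --- one checks that $\dot\mbU^+\dot\mbU^0\dot\mbU^-$ is closed under left multiplication by all generators and contains $1$, so the multiplication map $m\colon\dot\mbU^+\otimes\dot\mbU^0\otimes\dot\mbU^-\to\dot\mbU_q^\lambda(\mfg)$ is surjective. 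On the other side, the triangular decomposition \eqref{Uqg:TD} of $U_q^\lambda(\mfg)$, tensored with $\Q(q)[v^{\pm1}]$, restricts to an isomorphism $m_{\bUqg}\colon U_q(\mfn^+)\otimes\bUqg^0\otimes U_q(\mfn^-)\iso\bUqg$, where $\bUqg^0:=\bUqg\cap(U_q^\lambda(\mfh)\otimes\Q(q)[v^{\pm1}])$; surjectivity here follows from the same left-ideal argument applied inside $\bUqg$, and injectivity is inherited from the ambient algebra.

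Since $\varphi$ carries $\dot\mbU^\pm$ into $U_q(\mfn^\pm)$ and $\dot\mbU^0$ into $\bUqg^0$, one has the factorization $\varphi\circ m=m_{\bUqg}\circ(\varphi^+\otimes\varphi^0\otimes\varphi^-)$, with $\varphi^\pm,\varphi^0$ the restrictions of $\varphi$. As $m_{\bUqg}$ is an isomorphism and a tensor product of injective linear maps over a field is injective, $\varphi\circ m$ is injective provided $\varphi^+,\varphi^0,\varphi^-$ are; the surjectivity of $m$ then forces $\varphi$ itself to be injective, since $\varphi(u)=0$ with $u=m(w)$ gives $w=0$ and hence $u=0$. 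Everything therefore reduces to injectivity of the three restrictions. For $\varphi^\pm$ this is the standard fact that the subalgebra $U_q(\mfn^\pm)\subset U_q(\mfg)$ generated by the $E_i$ (resp.\ $F_i$) is presented by the $q$-Serre relations (see \cite{Lusztig-Book} or \cite{Jantzen-Book}): the relations \eqref{dot:4} give a surjection from the Serre-presented algebra onto $\dot\mbU^\pm$ whose composite with $\varphi^\pm$ is this known isomorphism, forcing $\varphi^\pm$ to be injective.

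The remaining and most delicate point is the injectivity of $\varphi^0$, which is precisely where the nonvanishing of $\lambda$ and the structure of $\wlattice(\lambda)/\msQ$ enter. The subalgebra $\dot\mbU^0$ is spanned by the monomials $\prod_i(\upxi_i^+)^{n_i}(\upxi_\lambda^+)^{m}$ with $(n_\bullet,m)\in\Z^{\mbI}\times\Z$, and $\varphi^0$ sends such a monomial to $K_{\sum_i n_i\alpha_i+m\lambda}\,v^{-m}$. These images are linearly independent in $U_q^\lambda(\mfh)\otimes\Q(q)[v^{\pm1}]$: the exponent $-m$ of $v$ separates distinct values of $m$, while for fixed $m$ the linear independence of the simple roots makes the lattice elements $\sum_i n_i\alpha_i+m\lambda$ pairwise distinct. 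A linear map sending a spanning set to a linearly independent set is injective, so $\varphi^0$ is injective (and these monomials form a basis of $\dot\mbU^0$). I expect this Cartan computation to be the main obstacle --- not because it is difficult, but because it is the one step genuinely sensitive to the auxiliary variable $v$: it is exactly the factor $v^{-1}$ in the image of $\upxi_\lambda^+$ that keeps this generator independent from the $K_{\alpha_i}$ even when $\lambda$ happens to lie in $\msQ$. Combining the three injectivity statements completes the argument.
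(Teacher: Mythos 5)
Your proposal is correct, but it establishes injectivity by a genuinely different route than the paper. The first half (well-definedness of $\varphi$ and surjectivity via Lemma~\ref{L:lattice-V}) coincides with the paper's argument. For injectivity, however, the paper never touches the structure theory of the abstractly presented algebra: it constructs $\varphi^{-1}$ explicitly, by forming the auxiliary algebra $\ddot\mbU_q^\lambda(\mfg)=(\dot\mbU_q^\lambda(\mfg)\otimes \Q(q)[v^{\pm 1}])/(y-v^{\olambda})$, where $y=(\upxi_\lambda^-)^{\olambda}\prod_{j\in\mbI}(\upxi_j^+)^{n_j}$ is central and invertible, proving by a $v$-degree computation that $\dot\mbU_q^\lambda(\mfg)$ embeds into this quotient, and then using the defining presentation of $U_q^\lambda(\mfg)$ to build a homomorphism $U_q^\lambda(\mfg)\otimes\Q(q)[v^{\pm 1}]\to\ddot\mbU_q^\lambda(\mfg)$ --- the delicate point being that the Cartan images $\dot\upxi_\gamma^+$ are independent of the chosen decomposition $\gamma=\sum_i c_i\alpha_i+m\lambda$ --- whose restriction to $\bUqg$ inverts $\varphi$ on generators. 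Your argument instead factors $\varphi$ through triangular decompositions on both sides and reduces everything to the injectivity of $\varphi^\pm$ and $\varphi^0$; the Cartan part is handled correctly as you describe, but the injectivity of $\varphi^\pm$ rests on the theorem that $U_q(\mfn^\pm)$ is presented by the $q$-Serre relations, a substantially deeper external input (Lusztig, Jantzen) which the paper's purely relation-theoretic argument avoids entirely, as it also avoids the linear independence of the $K_\mu$'s. In exchange, your route produces a triangular decomposition of $\dot\mbU_q^\lambda(\mfg)$ (hence of $\bUqg$) as a by-product, which the paper's proof does not yield. Two small points you should make explicit: first, $\varphi$ takes values in $\bUqg$ only because $K_i^{\pm 1}\in\bUqg$, which again uses Lemma~\ref{L:lattice-V} (write $K_{\alpha_i}=(K_\mu v^{-1})(K_\gamma v^{-1})^{-1}$ for weights $\mu,\gamma\in\wlattice_\lambda$ with $\mu-\gamma=\alpha_i$); second, the surjectivity of your map $m_{\bUqg}$ is never actually needed --- only its injectivity, inherited from \eqref{Uqg:TD}, enters the final argument, so the left-ideal straightening inside $\bUqg$ can be dropped.
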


\begin{proof}
It follows readily from the the definition of $\bUqg$ that the given assignment extends to a homomorphism $\varphi:\dot\mbU_q^\lambda(\mfg)\to \mbU_q^\lambda(\mfg)$, which is surjective since the lattice $\wlattice(\lambda)$ is generated as a $\Z$-module by the simple roots $\{\alpha_i\}_{i\in \mbI}$ and the highest weight $\lambda$.

Let's now construct the inverse of $\varphi$ explicitly. Since $\olambda\lambda\in \msQ$, we can write $\olambda\lambda=\sum_{i\in \mbI}n_i \alpha_i$ with $n_i\in \Z$ for each $i\in \mbI$.  Set 
\begin{equation}\label{y:1}
y:=(\upxi_\lambda^-)^{\olambda}\prod_{j\in \mbI} (\upxi_j^+)^{n_j}\in \dot\mbU_q^\lambda(\mfg).
\end{equation}
It follows from \eqref{dot:1} and \eqref{dot:2} that this is an invertible central element. Consider now the $\Q(q)$-algebra 
\begin{equation*}
\ddot\mbU_q^\lambda(\mfg):=(\dot\mbU_q^\lambda(\mfg)\otimes \Q(q)[v^{\pm 1}]) /(y-v_\lambda)
\end{equation*}
where $v_\lambda=v^{\olambda}\in \Q(q)[v^{\pm 1}]$. There is a natural algebra homomorphism 
\begin{equation*}
\imath:\dot\mbU_q^\lambda(\mfg)\to \ddot\mbU_q^\lambda(\mfg)
\end{equation*}
induced by the inclusion $\dot\mbU_q^\lambda(\mfg)\into \dot\mbU_q^\lambda(\mfg)\otimes \Q(q)[v^{\pm 1}]$. Note that $\imath$ is injective. Indeed, if $x\in \dot\mbU_q^\lambda(\mfg)$ satisfies $x=z \cdot (y-v_\lambda)$ for some $z\in \dot\mbU_q^\lambda(\mfg)\otimes \Q(q)[v^{\pm 1}]$, then writing $z=\sum_{n\in \Z} z_n v^n$ (with only finitely many $z_n$ nonzero), we see that 
\begin{equation*}
z \cdot (y-v_\lambda)=\sum_{n\in \Z} z_n y v^n -\sum_{n\in \Z} z_n v^{n+\olambda}=
\sum_{n\in \Z}(z_n y-z_{n-\olambda})v^n
\end{equation*}
must have degree $0$ in $v$. In particular, $z_n y=z_{n-\olambda}$ for all $n\neq 0$ and $x=z_0y-z_{-\olambda}$. But then $z_{-\olambda}=z_{-2\olambda}y^{-1}=z_{-3\olambda}y^{-2}=\ldots=z_{-k\olambda}y^{-k+1}$ for any $k>0$ (so $z_{-\olambda}=0$). 
Similarly, 
\begin{equation*}
z_{0}=z_{\olambda-\olambda}=z_{\olambda}y=z_{2\olambda}y^2=\cdots=z_{k\olambda}y^{k}
\end{equation*}
for each $k\geq 0$, so we must have $z_0=0$. Thus, $x=z_0y-z_{-\olambda}=0$. This shows that $\imath$ is injective.

Next, we construct a homomorphism $U_q^\lambda(\mfg)\to \ddot\mbU_q^\lambda(\mfg)$ as follows. For each $\gamma\in \wlattice(\lambda)=\msQ+\Z\lambda$ and decomposition $\gamma=\sum_{i\in \mbI} c_i \alpha_i + m\lambda$, set 
\begin{equation*}
\dot\upxi_\gamma^+:=(\upxi_\lambda^+)^m v^{m}\prod_{j\in \mbI}(\upxi_j^+)^{c_j}\in \ddot\mbU_q^\lambda(\mfg).
\end{equation*}
\noindent\textit{Claim}. The element $\dot\upxi_\gamma^+$ does not depend on the choice of decomposition $\gamma=\sum_{i\in \mbI} c_i \alpha_i + m\lambda$.

\begin{proof}[Proof of claim] \let\qed\relax
Suppose $\gamma=\sum_{i\in \mbI} b_i \alpha_i + n\lambda$ is another such a decomposition. We must show that in $\ddot\mbU_q^\lambda(\mfg)$, one has 
\begin{equation*}
(\upxi_\lambda^+)^{m-n} v^{m-n}\prod_{j\in \mbI}(\upxi_j^+)^{c_j-b_j}=1.
\end{equation*}
Since $(n-m)\lambda=\sum_{j\in \mbI}(b_j-c_j)\alpha_j\in \msQ$, we have $a\olambda=n-m$ for some $a\in \Z$, and thus $(n-m)\lambda=a\olambda\lambda=\sum_{i\in \mbI} an_i \alpha_i$. Therefore,
$b_j-c_j=an_j$ for all $j\in \mbI$ and 
\begin{equation*}
(\upxi_\lambda^+)^{m-n} v^{m-n}\prod_{j\in \mbI}(\upxi_j^+)^{c_j-b_j}
=
(\upxi_\lambda^+)^{-a\olambda} v^{-a\olambda}\prod_{j\in \mbI}(\upxi_j^+)^{-an_j}=v^{a\olambda}v^{-a\olambda}=1. 
\end{equation*}
\end{proof}
Given the claim, it follows readily that there is an algebra homomorphism $\phi:U_q^\lambda(\mfg)\to \ddot\mbU_q^\lambda(\mfg)$ uniquely determined by 
\begin{equation*}
K_\gamma^{\pm 1} \mapsto \dot\upxi_\gamma^\pm, \quad E_i\mapsto x_i^+, \quad F_i \mapsto x_i^- \quad \forall \quad \gamma\in \wlattice(\lambda)\; \text{ and }\;  i\in \mbI,
\end{equation*}
where $\dot\upxi_\gamma^-=(\dot\upxi_\gamma^+)^{-1}$.  Since there is also a natural algebra homomorphism from $\Q(q)[v^{\pm 1}]$ to the center of $\ddot\mbU_q^\lambda(\mfg)$ sending $v$ to $v$, we obtain a homomorphism
\begin{equation*}
\Phi:U_q^\lambda(\mfg)\otimes \Q(q)[v^{\pm 1}]\to \ddot\mbU_q^\lambda(\mfg).
\end{equation*}
which sends a simple tensor $x\otimes p(v)$ to $\phi(x)p(v)$ for any Laurent polynomial $p(v)$ with coefficients in $\Q(q)$. Note that 
\begin{equation*}
\Phi(K_\lambda^{\pm 1}v^{\mp 1})=\dot\upxi_\lambda^{\pm} v^{\mp 1}=\upxi_\lambda^\pm v^{\pm 1}v^{\mp 1}=\upxi_\lambda^\pm \quad \text{ and }\quad \Phi(K_i^{\pm 1})=\xi_i^\pm \quad \forall\; i\in \mbI.
\end{equation*} 
It follows  that $\Phi$ restricts to a surjection $\mbU_q^\lambda(\mfg)\to \imath(\dot\mbU_q^\lambda(\mfg))\cong \dot\mbU_q^\lambda(\mfg)$, which is necessarily $\varphi^{-1}$. 
\end{proof}
%
%

\subsection{The algebraic structure of \texorpdfstring{$\mathbf{U}_q^\lambda(\mfg)$}{U\_q\^{}lambda(g)}}
\label{ssec:bUqg-alg}

In this subsection, we highlight a number of additional structural properties for $\bUqg$ which follow from its definition and Proposition \ref{P:gen-rels}.

Given $\zeta\in \Q(q)^\times$, let $\dot\upchi_{q,\zeta}$ be the algebra automorphism of $\Q(q)[v^{\pm 1}]$ uniquely determined by $\dot\upchi_{q,\zeta}(v)=\zeta v$. 
Then the tensor product 
\begin{equation*}
\upchi_{q,\zeta}:=\mathrm{Id}_{U_q^\lambda(\mfg)} \otimes \dot\upchi_{q,\zeta}
\end{equation*}
is an algebra automorphism of $U_q^\lambda(\mfg)\otimes\Q(q)[v^{\pm 1}]$. It is easy to see that the subalgebra of $U_q^\lambda(\mfg)\otimes\Q(q)[v^{\pm 1}]$ consisting of all elements fixed by each $\upchi_{q,\zeta}$ coincides with the quantum enveloping algebra $U_q^\lambda(\mfg)$ associated to the lattice $\wlattice(\lambda)$. Moreover, each $\upchi_{q,\zeta}$ restricts to an automorphism of $\bUqg$. We set 
\begin{equation*}
\bUqg^{\upchi}:=\bigcap_{\zeta\in \Q(q)^\times} \bUqg^{\upchi_{q,\zeta}}
\end{equation*}
where $\bUqg^{\upchi_{q,\zeta}}$ consists of all $x\in \bUqg$ for each $\upchi_{q,\zeta}(x)=x$. We then have the following result.
\begin{proposition}\label{P:bUqg-str}
Let $v_\lambda:=v^{\olambda}$. Then $\bUqg$ has the following properties:

\begin{enumerate}[font=\upshape]\setlength{\parskip}{3pt}
\item\label{Uqlambda:2}  $\bUqg$ contains $U_q(\mfg)\otimes \Q(q)[v_\lambda^{\pm 1}]$ as a subalgebra, which is proper if and only if $\lambda\notin\msQ$.  Moreover, one has 
\begin{equation*}
 U_q^\lambda(\mfg)\cap \bUqg=U_q(\mfg)\quad \text{ and }\quad \Q(q)[v^{\pm 1}]\cap \bUqg=\Q(q)[v_\lambda^{\pm 1}]. 
 \end{equation*}

\item\label{Uqlambda:3} The subalgebra $\bUqg^{\upchi}$ is equal to $U_q(\mfg)$. 
\item\label{Uqlambda:4} The restriction of the epimorphism $\mathrm{Id}\otimes \veps:U_q^\lambda(\mfg)\otimes \Q(q)[v^{\pm 1}]\onto U_q^\lambda(\mfg)$ to $\bUqg$ induces an isomorphism of Hopf algebras 
\begin{equation*}
\bUqg/(v_\lambda-1)\iso U_q^\lambda(\mfg). 
\end{equation*}
\end{enumerate}
\end{proposition}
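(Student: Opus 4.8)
The plan is to show that $p:=(\mathrm{Id}\otimes\veps)\vert_{\bUqg}$ is a surjective morphism of Hopf algebras onto $U_q^\lambda(\mfg)$ whose kernel is the ideal generated by $v_\lambda-1$, and then to invert the induced map explicitly. First I would note that $\mathrm{Id}\otimes\veps\colon U_q^\lambda(\mfg)\otimes\Q(q)[v^{\pm1}]\to U_q^\lambda(\mfg)$ is a morphism of Hopf algebras, being obtained from the counit of the second tensor factor; hence its restriction $p$ to the Hopf subalgebra $\bUqg$ is again a Hopf algebra morphism. On generators one has $p(E_i)=E_i$, $p(F_i)=F_i$ and $p(K_\mu v^{-1})=K_\mu$ for $\mu\in\wlattice_\lambda$, and since $\wlattice_\lambda$ generates $\wlattice(\lambda)$ these images generate $U_q^\lambda(\mfg)$, so $p$ is surjective. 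By part~\eqref{Uqlambda:2} the element $v_\lambda=v^{\olambda}$ lies in $\bUqg$, and as $v$ is grouplike and central, $v_\lambda-1$ is central and generates a Hopf ideal; since $p(v_\lambda)=\veps(v^{\olambda})=1$, this ideal lies in $\ker p$. Thus $p$ descends to a surjection of Hopf algebras $\bar p\colon\bUqg/(v_\lambda-1)\onto U_q^\lambda(\mfg)$.

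To invert $\bar p$ I would construct a section $\sigma\colon U_q^\lambda(\mfg)\to\bUqg/(v_\lambda-1)$ from the presentation of Definition~\ref{D:Uqg}. Put $\sigma(E_i)=\overline{E_i}$, $\sigma(F_i)=\overline{F_i}$, and for $\gamma\in\wlattice(\lambda)$ write $\gamma=m\lambda+\beta$ with $m\in\Z$ and $\beta\in\msQ$ (possible as $\wlattice(\lambda)=\Z\lambda+\msQ$ by Lemma~\ref{L:lattice-V}) and set $\sigma(K_\gamma)=\overline{K_\gamma v^{-m}}$. Here $K_\gamma v^{-m}=(K_\lambda v^{-1})^{m}K_\beta\in\bUqg$, since $K_\lambda v^{-1}$ is a generator and each $K_{\alpha_i}=(K_\mu v^{-1})(K_{\mu'}v^{-1})^{-1}\in\bUqg$ for a suitable pair $\mu,\mu'\in\wlattice_\lambda$ furnished by Lemma~\ref{L:lattice-V}. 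This lift is independent of the decomposition: if also $\gamma=m'\lambda+\beta'$ then $(m-m')\lambda=\beta'-\beta\in\msQ$, so $\olambda\mid(m-m')$, and $K_\gamma v^{-m}$, $K_\gamma v^{-m'}$ differ by a power of $v_\lambda$, hence agree in the quotient. One then checks that $\sigma$ respects the relations of Definition~\ref{D:Uqg}; the decisive point is that in $[E_i,F_j]=\delta_{ij}(K_{\alpha_i}-K_{-\alpha_i})/(q_i-q_i^{-1})$ the weight $\alpha_i$ lies in $\msQ$, forcing $m=0$ so that $\sigma(K_{\alpha_i})=\overline{K_{\alpha_i}}$ carries no $v$-twist and the relation is preserved. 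Finally $\bar p\circ\sigma=\mathrm{Id}$ is immediate from $\bar p(\overline{K_\gamma v^{-m}})=K_\gamma\veps(v^{-m})=K_\gamma$, while $\sigma\circ\bar p=\mathrm{Id}$ follows on the generators $\overline{E_i},\overline{F_i},\overline{K_\mu v^{-1}}$ of the quotient, using that $\mu\in\wlattice_\lambda\subset\lambda+\msQ$ gives $m=1$. Hence $\bar p$ is an isomorphism of Hopf algebras.

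There is no deep obstacle here: the content is bookkeeping with the lattice $\wlattice(\lambda)$ relative to $\msQ$, and the one delicate point is the well-definedness and relation-compatibility of the section $\sigma$, both resting on the identification $\wlattice(\lambda)/\msQ\cong\Z/\olambda\Z$ from Lemma~\ref{L:lattice-V}. As a conceptually cleaner alternative that computes $\ker p$ directly, one can first establish a triangular decomposition $\bUqg=U_q(\mfn^+)\otimes\mcH\otimes U_q(\mfn^-)$, where $\mcH$ is the group algebra of the subgroup $L=\{(m\lambda+\beta,-m):m\in\Z,\ \beta\in\msQ\}\subset\wlattice(\lambda)\oplus\Z$; then $p=\mathrm{Id}\otimes(p\vert_{\mcH})\otimes\mathrm{Id}$, and $p\vert_{\mcH}$ is the group-algebra map induced by the projection $L\onto\wlattice(\lambda)$ with kernel $\langle v_\lambda\rangle$, whence $\ker p=(v_\lambda-1)\bUgg$ falls out at once, where I abbreviate $\bUgg=\bUqg$.
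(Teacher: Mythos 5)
Your proposal has a genuine gap: the proposition has three parts, and you prove only part \eqref{Uqlambda:4}. Part \eqref{Uqlambda:2} --- the containment $U_q(\mfg)\otimes\Q(q)[v_\lambda^{\pm 1}]\subset\bUqg$, the criterion that this containment is proper precisely when $\lambda\notin\msQ$, and the two intersection identities $U_q^\lambda(\mfg)\cap\bUqg=U_q(\mfg)$ and $\Q(q)[v^{\pm 1}]\cap\bUqg=\Q(q)[v_\lambda^{\pm 1}]$ --- and part \eqref{Uqlambda:3}, the identification $\bUqg^{\upchi}=U_q(\mfg)$, are never addressed. These are not vacuous: the intersection statements (and the properness criterion) require a spanning/basis argument, e.g.\ that by the triangular decomposition \eqref{Uqg:TD} the algebra $\bUqg$ is spanned by elements $x_+K_\gamma v^{-m}x_-$ with $x_\pm\in U_q(\mfn^\pm)$ and $[\gamma]=[m\lambda]$ in $\wlattice(\lambda)/\msQ$, from which one reads off that the $v$-degree-zero part is exactly $U_q(\mfg)$ and the $K$-trivial part is exactly $\Q(q)[v_\lambda^{\pm 1}]$. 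Part \eqref{Uqlambda:3} then follows because the $\upchi_{q,\zeta}$-fixed subalgebra of $U_q^\lambda(\mfg)\otimes\Q(q)[v^{\pm 1}]$ is $U_q^\lambda(\mfg)$, so $\bUqg^{\upchi}=\bUqg\cap U_q^\lambda(\mfg)$, which is part \eqref{Uqlambda:2}. Ironically, the ``cleaner alternative'' you sketch at the end (a triangular decomposition of $\bUqg$ with Cartan part the group algebra of the lattice $L$) is exactly the missing ingredient for part \eqref{Uqlambda:2}, but you leave it as a sketch and deploy it only to recompute $\ker p$. A smaller, related defect is circularity: inside your proof of part \eqref{Uqlambda:4} you invoke part \eqref{Uqlambda:2} to get $v_\lambda\in\bUqg$; since that part is unproven, you should instead verify directly that $(K_\lambda v^{-1})^{\olambda}=K_{\olambda\lambda}v^{-\olambda}$ with $K_{\olambda\lambda}\in\bUqg$, using the factorization $K_{\alpha_i}=(K_\mu v^{-1})(K_{\mu'}v^{-1})^{-1}$ from Lemma \ref{L:lattice-V} --- an observation you already make, so the fix is easy, but as written the logical order is broken.

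That said, your argument for part \eqref{Uqlambda:4} itself is correct and is in the spirit of what the paper intends: the paper omits the proof, indicating it follows from Proposition \ref{P:gen-rels} and its proof, and your section $\sigma$ --- in particular the well-definedness of $\overline{K_\gamma v^{-m}}$ modulo $(v_\lambda-1)$, resting on $\wlattice(\lambda)/\msQ\cong\Z/\olambda\Z$ --- is precisely the mechanism behind the elements $\dot\upxi_\gamma^+$ and the claim in the proof of Proposition \ref{P:gen-rels}. Your observation that the relation $[E_i,F_j]=\delta_{ij}(K_{\alpha_i}-K_{-\alpha_i})/(q_i-q_i^{-1})$ survives because $\alpha_i\in\msQ$ forces no $v$-twist is the right decisive point, and the two composites $\bar p\circ\sigma$ and $\sigma\circ\bar p$ are correctly checked on generators. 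So the verdict is: a sound proof of one third of the statement, with the remaining two parts (which carry real, if routine, content) missing.
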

We omit the proof of this result; as indicated at the beginning of the section, it follows from Proposition \ref{P:gen-rels}, its proof, and the original  definition of $\bUqg$ as a subalgebra of $U_q^\lambda(\mfg)\otimes \Q(q)[v^{\pm 1}]$. We shall not apply it until Section \ref{ssec:Cons-R}.

\subsection{The generating matrices \texorpdfstring{$\scL^+$}{L\^{}+} and \texorpdfstring{$\scL^-$}{L\^{}-}}
\label{ssec:bUqg-gen}

The goal of this subsection is to introduce generating matrices for $\bUqg$ which satisfy the well-known $\textit{RLL}$ quadratic matrix relations and play the role of $L$-operators with respect to the underlying representation $V$ (see \cite{KS-book}*{\S8.5}, for instance).  

Let's begin by recalling some helpful notation which frequently arises in the $R$-matrix formalism for quantum groups. Suppose we are given an arbitrary unital, associative $\Q(q)$-algebra $\mathscr{U}$. For each pair of positive integers $n$ and $k$ with $k\leq n$, let $\imath_{\mathscr{U},n}^{(k)}:\mathscr{U}\to \mathscr{U}^{\otimes n}$ denote the homomorphism
\begin{equation*}
\imath_{\mathscr{U},n}^{(k)}(x)=1_\mathscr{U}^{\otimes (k-1)}\otimes x \otimes 1_\mathscr{U}^{\otimes (n-k)} \quad \forall \quad x\in \mathscr{U}.
\end{equation*}
Given an additional positive integer $m$ and $T\in \End(V)\otimes \mathscr{U}$, we set
\begin{equation*}
T_{i[j]}:=(\imath^{(i)}_{\End(V),n}\otimes \imath^{(j)}_{\mathscr{U},m})(F)\in \End(V)^{\otimes n} \otimes \mathscr{U}^{\otimes m},
\end{equation*}
where $1\leq i\leq n$ and $1\leq j\leq m$ are fixed. We will write $T_{[j]}$ for $T_{1[j]}$ and $T_{i}$ for $T_{i[1]}$ in the special cases where $n=1$ and $m=1$, respectively.

We now turn towards defining $\scL^+$ and $\scL^-$. We introduce the matrix $\scK$ by 
\begin{equation*}
\scK:=\sum_{\mu\in \wlattice_\lambda} \mathrm{Id}_{V_{\mu}}\otimes K_\mu \in \End(V)\otimes U_q^\lambda(\mfg).
\end{equation*}
Then $\scK v^{-1} \in \End(V)\otimes \bUqg$ and $\scK$ satisfies the relation 
\begin{equation}\label{Uppsi-K}
(\pi \otimes \mathrm{Id})( \Uppsi(x))=\scK^{-1} (\pi \otimes \mathrm{Id})(x)\scK \quad \forall \; x\in U_q(\mfg)^{\otimes 2},
\end{equation}
where $\Uppsi$ is as in Section \ref{ssec:Uqg-R}. 
Next, we introduce $\scL^\pm \in \End(V)\otimes \bUqg$ by setting
\begin{equation*}
\scL^+:=v\scK^{-1} \cdot (\pi\otimes \omega)(\mathscr{R}^+) \quad \text{ and }\quad 
\scL^-:=(\pi\otimes \omega)((\mathscr{R}^+_{21})^{-1})\cdot \scK v^{-1}, 
\end{equation*}
where $\omega$ is the Chevalley involution on $U_q(\mfg)\subset \bUqg$ defined in \eqref{omega}, and $\mathscr{R}^+$ is as in Section \ref{ssec:Uqg-R}.

\begin{proposition}\label{P:bUqg-L}
The matrices $\scL^+$ and $\scL^-$ have the following properties:
\begin{enumerate}[font=\upshape]
\item\label{L:1} Their coefficients generate $\bUqg$ as a $\Q(q)$-algebra. 
\item\label{L:2} They satisfy the Hopf algebraic relations 
\begin{gather*}
\Delta(\scL^\pm)=\scL^\pm_{[1]}\scL^\pm_{[2]},\quad S(\scL^\pm)=(\scL^\pm)^{-1}\quad \text{ and }\quad
\veps(\scL^\pm)=I.
\end{gather*}
\item\label{L:3} They satisfying the relations
\begin{equation*}
\mrR \scL^\pm_1 \scL^\pm_2 =\scL^\pm_2\scL^\pm_1\mrR \quad \text{ and }\quad
\mrR \scL^+_1 \scL^-_2 =\scL^-_2\scL^+_1\mrR
\end{equation*}
in the algebra $\End(V)^{\otimes 2}\otimes \bUqg$.
\end{enumerate}
\end{proposition}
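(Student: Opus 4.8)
The plan is to handle the three claims in turn, obtaining \ref{L:1} from a Gauss-type triangular decomposition of $\scL^\pm$, \ref{L:2} from the coproduct identities \eqref{R^+:Hopf} for $\mathscr{R}^+$, and \ref{L:3} from \ref{L:2} together with the intertwining property of $\mrR$. First I would record the triangular structure. Since $\omega(U_q(\mfn^-)_{-\nu})=U_q(\mfn^+)_\nu$ and $\mathscr{R}^+_0=1$, the matrix $(\pi\otimes\omega)(\mathscr{R}^+)$ equals $I\otimes 1$ plus a sum of homogeneous terms lying in $\End(V)_\nu\otimes U_q(\mfn^+)_\nu$ with $\nu\in\dot\msQ_+$, so that $\scL^+$ is the product of the invertible diagonal matrix $v\scK^{-1}=\sum_\mu\mathrm{Id}_{V_\mu}\otimes K_\mu^{-1}v$ with a unipotent upper triangular matrix; symmetrically, $\scL^-$ is a unipotent lower triangular matrix times the diagonal matrix $\scK v^{-1}=\sum_\mu\mathrm{Id}_{V_\mu}\otimes K_\mu v^{-1}$. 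For \ref{L:1}, multiplying $\scL^+$ on the left by $v^{-1}\scK$ (whose entries $K_\mu v^{-1}$ are coefficients of $\scL^-$) isolates $(\pi\otimes\omega)(\mathscr{R}^+)$, whose weight $\alpha_i$ component is a nonzero scalar multiple of $\pi(E_i)\otimes E_i$ by the formula $\mathscr{R}^+_{\alpha_i}=(q_i-q_i^{-1})E_i\otimes F_i$; since $\pi(E_i)\neq 0$ by Lemma \ref{L:lattice-V}, some matrix coefficient recovers $E_i$, and likewise $\scL^-$ recovers each $F_i$. As the diagonal parts recover the $K_\mu^{\pm 1}v^{\mp 1}$, and these together with the $E_i,F_i$ are the generators from Definition \ref{D:bUqg}, the claim follows.

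For \ref{L:2} the essential point is the comultiplicativity $(\mathrm{Id}\otimes\Delta)(\scL^\pm)=\scL^\pm_{[1]}\scL^\pm_{[2]}$; the antipode and counit identities are then formal consequences. Indeed, $\veps(\scL^\pm)=I$ follows at once from $\veps\circ\omega=\veps$, the identity $(\pi\otimes\veps)(\mathscr{R}^+)=I$ (only the degree-zero term survives the counit), and $\veps(K_\mu^{\pm 1}v^{\mp 1})=1$; and once comultiplicativity and $\veps(\scL^\pm)=I$ are in hand, writing $\scL^\pm=\sum_{a,b}e_{ab}\otimes\ell_{ab}$ and applying the antipode axioms entrywise yields $S(\scL^\pm)\scL^\pm=I=\scL^\pm S(\scL^\pm)$, whence $S(\scL^\pm)=(\scL^\pm)^{-1}$. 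To prove comultiplicativity I would factor $\scL^+=(v\scK^{-1})\cdot(\pi\otimes\omega)(\mathscr{R}^+)$, observe that the diagonal factor is itself comultiplicative, and reduce to an identity for $(\pi\otimes\omega)(\mathscr{R}^+)$. Because $\omega$ is an isomorphism onto the co-opposite Hopf algebra, one has $\Delta\circ\omega=(\omega\otimes\omega)\circ\Delta^{\mathrm{op}}$, and the required identity is then obtained by evaluating the coproduct identities \eqref{R^+:Hopf}; the twisting automorphism $\Uppsi$ occurring there is reproduced exactly by conjugating the unipotent factor by the diagonal factor, which is the content of \eqref{Uppsi-K}.

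For \ref{L:3} I would run the standard derivation of the $RLL$ relations from a universal $R$-matrix, adapted to the present normalization. The key is a fusion identity in the $V$-factor: using the first coproduct identity in \eqref{R^+:Hopf} together with \eqref{D-Psi} and \eqref{Uppsi-K} to absorb $\Uppsi$ into conjugations by $\Rd$ and $\scK$, one shows that $\scL^+_{13}\scL^+_{23}$ is obtained from $\scL^+$ by fusing two copies of $V$, i.e. by replacing the $\End(V)$-slot with $\pi^{\otimes 2}\circ\Delta$, while $\scL^+_{23}\scL^+_{13}$ corresponds to $\pi^{\otimes 2}\circ\Delta^{\mathrm{op}}$. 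The intertwining relation $\mrR\,\pi^{\otimes 2}(\Delta(x))=\pi^{\otimes 2}(\Delta^{\mathrm{op}}(x))\,\mrR$ recorded after \eqref{R:def} then lets me commute $\mrR$ across the fused operators, giving $\mrR\scL^+_1\scL^+_2=\scL^+_2\scL^+_1\mrR$. The $\scL^-$ relation is identical with $(\mathscr{R}^+_{21})^{-1}$ and the second identity of \eqref{R^+:Hopf} in place of $\mathscr{R}^+$, and the mixed relation $\mrR\scL^+_1\scL^-_2=\scL^-_2\scL^+_1\mrR$ follows by combining the two computations.

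I expect the main obstacle to lie in \ref{L:3}, and to a lesser extent in the comultiplicativity step of \ref{L:2}: the delicate bookkeeping needed to confirm that the twisting automorphism $\Uppsi$ in \eqref{R^+:Hopf}, the diagonal normalizers $v\scK^{-1}$ and $\scK v^{-1}$, the diagonal operator $\Rd$ built into $\mrR$, and the Chevalley involution $\omega$ all interact so as to reassemble $\mrR$ and $\scL^\pm$ with exactly the correct normalizations. Concretely, one must verify that conjugation by the diagonal matrices reproduces the $\Uppsi$-twists via \eqref{Uppsi-K} and \eqref{D-Psi}, and that the central factor $v$ together with the grouplike part $\scK$ cancels correctly; a single misplaced $\scK$ or sign would break the identity.
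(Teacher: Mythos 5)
Your arguments for Parts \eqref{L:1} and \eqref{L:2}, and for the two relations $\mrR\scL^\pm_1\scL^\pm_2=\scL^\pm_2\scL^\pm_1\mrR$ in Part \eqref{L:3}, are correct and essentially the paper's own route: the paper likewise reads the generators $E_i$, $F_i$, $K_\mu^{\pm1}v^{\mp1}$ off the components $\scL^+_{\alpha_i}$, $\scL^-_{-\alpha_i}$ and the weight-zero blocks, deduces $\Delta(\scL^\pm)=\scL^\pm_{[1]}\scL^\pm_{[2]}$ from \eqref{R^+:Hopf} using \eqref{Uppsi-K}, obtains the antipode and counit formulas from the Hopf axioms, and defers the quadratic relations to \cite{KS-book}*{Prop.~8.5.27}. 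Your fusion identity is in fact valid as stated: a direct computation with \eqref{R^+:Hopf} and \eqref{Uppsi-K} gives $\scL^+_1\scL^+_2=v^2(\scK_1\scK_2)^{-1}\cdot\bigl((\pi^{\otimes2}\Delta)\otimes\omega\bigr)(\mathscr{R}^+)$, the flipped product corresponds to $\Delta^{\mathrm{op}}$, and since $\mrR$ commutes with the total-weight-diagonal matrix $\scK_1\scK_2$, the intertwining relation recorded after \eqref{R:def} closes the $(+,+)$ case; the $(-,-)$ case is symmetric.

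The gap is the mixed relation, which you claim "follows by combining the two computations". It does not. Your mechanism requires exhibiting the two orderings as the $\Delta$- and $\Delta^{\mathrm{op}}$-fusions of one and the same universal element, so that $\mrR\,\pi^{\otimes2}(\Delta(x))=\pi^{\otimes2}(\Delta^{\mathrm{op}}(x))\,\mrR$ has something to act on; for $\scL^+_1\scL^+_2$ that element is $\mathscr{R}^+$ and for $\scL^-_1\scL^-_2$ it is $(\mathscr{R}^+_{21})^{-1}$, but $\scL^+_1\scL^-_2$ is not such a fusion --- its first matrix leg carries components of $\mathscr{R}^+$ while its second carries components of $(\mathscr{R}^+_{21})^{-1}$, so it does not lie in the image of $(\pi^{\otimes2}\Delta)\otimes\mathrm{Id}$. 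Nor can the mixed relation be a formal consequence of the two homogeneous ones: replacing $\scL^-$ by the identity matrix preserves the $(-,-)$ relation but destroys the mixed one. What the mixed relation actually encodes is the cross-commutation in $\bUqg$ between the coefficients of $\scL^+$ (which lie in $U_q(\mfn^+)$ times Cartan elements) and those of $\scL^-$ (which lie in $U_q(\mfn^-)$ times Cartan elements), and the ingredient governing this is the universal relation \eqref{R^+}, $\mathscr{R}^+\Delta(x)=\Delta^{\mathrm{op}}_\Uppsi(x)\,\mathscr{R}^+$, applied with the second tensor leg kept in $U_q(\mfg)$ (and then pushed into $\bUqg$ by $\omega$), rather than evaluated through $\pi$. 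This is precisely the ingredient the paper lists for Part \eqref{L:3} and that your argument never invokes: you only ever use the representation-level consequence of \eqref{R^+}, which suffices for the homogeneous relations but not for the mixed one.
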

\begin{proof}
Since $\mathscr{R}_0^+=1$,  $\mathscr{R}_{\alpha_i}^+=(q_i-q_i^{-1})E_i\otimes F_i$ and $\left((\mathscr{R}^+_{21})^{-1}\right)_{-\alpha_i}=-(q_i-q_i^{-1})F_i\otimes E_i$, the definitions of $\scL^+$ and $\scL^-$ yield that  
\begin{equation*}
\scL_{\alpha_i}^+= (q_i^{-1}-q_i) v\scK^{-1}\cdot \pi(E_i)\otimes E_i \quad \text{ and }\quad \scL_{-\alpha_i}^-=(q_i-q_i^{-1})\pi(F_i)\otimes F_i \cdot \scK v^{-1}
\end{equation*}
for all $i \in \mbI$. 
The assertion of Part \eqref{L:1} now follows from the fact that $\bUqg$ is generated by $\{E_i,F_i\}_{i\in \mbI}$ and the coefficients $\{K_\mu^{\pm 1}v^{\mp 1}\}_{\mu\in \wlattice_\lambda}$ of the $\End(V)_0\otimes \bUqg$ component $\scK^\pm v^{\mp 1}$ of $\scL^\pm$. 

The relations for $S$ and $\veps$ stated in \eqref{L:2} follow readily from the Hopf algebra axioms and the relation $\Delta(\scL^\pm)=\scL^\pm_{[1]}\scL^\pm_{[2]}$, which itself is deduced from \eqref{R^+:Hopf} using \eqref{Uppsi-K}. Similarly, the identities of \eqref{L:3} are deduced using \eqref{R^+}, \eqref{R^+:Hopf} and \eqref{Uppsi-K}. We refer the reader to \cite{KS-book}*{Prop.~8.5.27}, for example, for more details.  \qedhere 
\end{proof}
%
\section{The quantum algebra \texorpdfstring{$\mathbf{U}_\mathrm{R}^\lambda(\mfg)$}{U\_R\^{}lambda(g)}}\label{sec:URg}

We now shift our attention to the algebraic structure at the heart of this article: the Hopf algebra $\URg$ output by the $R$-matrix formalism for quantum groups applied to the solution \eqref{R:def} of the quantum Yang--Baxter equation.

\subsection{Definition of \texorpdfstring{$\mbU_\mrR^\lambda(\mfg)$}{U{\_R}g} and first properties}\label{ssec:URg-def}
In what follows, all notation is as in Section \ref{ssec:Uqg-reps}. 
Consider the subalgebras 
\begin{equation*}
\mathscr{E}^\pm:=\bigoplus_{\beta \in \msQ_{\pm}} \End(V)_\beta \subset \End(V).
\end{equation*}
As the trace form $\langle\,,\rangle$ is $\msQ$-graded, it restricts to a non-degenerate bilinear form $\langle \,,\, \rangle_\pm:\mathscr{E}^\pm \otimes \mathscr{E}^\mp\to \Q(q)$. Let $\Omega^\pm \in \mathscr{E}^\pm \otimes \mathscr{E}^\mp$ denote the canonical element associated to  $\langle \,,\, \rangle_\pm$, and consider the external direct sum $\mathscr{E}:=\mathscr{E}^+\oplus \mathscr{E}^-$. Define
\begin{equation}\label{mrL}
\mrL^\pm:=(1\otimes \imath^\mp)(\Omega^\pm)\in \End(V)\otimes T(\mathscr{E}),
\end{equation}
where $T(\mathscr{E})$ is the tensor algebra on the $\Q(q)$-vector space $\mathscr{E}$, and $\imath^\mp:\mathscr{E}^\mp\into T(\mathscr{E}^\mp)\subset T(\mathscr{E})$ is the natural inclusion. In what follows, we let $\Lz^\pm$ denote the zero weight component of $\mrL^\pm$: 
\begin{equation}\label{Lz}
\Lz^\pm:=\mrL_0^\pm \in \End(V)_0 \otimes T(\mathscr{E}). 
\end{equation}
\begin{definition}\label{D:URg}
We define $\mbU_\mrR^\lambda(\mfg)$ to be the quotient of the $\Q(q)$-algebra $T(\mathscr{E})$ by the two sided ideal generated by the relations
\begin{gather}
\Lz^+\Lz^-=I=\Lz^-\Lz^+, \label{Lz+Lz-}\\
\mrR \mrL_1^+ \mrL_2^-=\mrL_2^-\mrL_1^+ \mrR, \label{RLL+-}\\
\mrR \mrL_1^+\mrL_2^+=\mrL_2^+\mrL_1^+ \mrR \quad \text{ and }\quad \mrR \mrL_1^-\mrL_2^-=\mrL_2^-\mrL_1^- \mrR  \label{RLL}
\end{gather}
in $\End(V)\otimes T(\mathscr{E})$ and $\End(V)^{\otimes 2}\otimes T(\mathscr{E})$, respectively.
\end{definition}
\begin{remark}
Let us rephrase the above definition in more familiar terms. 
Suppose that $\{v_a\}_{a\in \mcJ}$ is any fixed basis of $V$, and let $\{E_{ab}\}_{a,b\in \mcJ}\subset \End(V)$ be the associated basis of matrix units in $\End(V)$, defined by $E_{ab}v_c=\delta_{bc}v_a$. 

Then $\mbU_\mrR^\lambda(\mfg)$ is isomorphic to the $\Q(q)$-algebra $\mbU_\mrR^\mcJ(\mfg)$ generated by elements $\{\mathscr{l}_{ab}^{\pm}\}_{a,b\in \mcJ}$ subject to the relations \eqref{Lz+Lz-}--\eqref{RLL}, with $\mrL^\pm$ replaced by the matrix 
$
\mrL^{\pm}_\mcJ:=\sum_{a,b}E_{ab}\otimes \mathscr{l}_{ab}^{\pm},
$
in addition to the triangularity relations 
\begin{equation}\label{triangularity}
\mrL^{\pm}_\mcJ= \sum_{\beta\in \msQ_\pm}\mrL_{\mcJ,\beta}^{\pm}.
\end{equation}
The relation \eqref{triangularity} is automatically satisfied by $\mrL^\pm$ from Definition \ref{D:URg}, and the assignment $\mrL^{\pm}_\mcJ\mapsto \mrL^\pm$ uniquely extends to an isomorphism
$\mbU_\mrR^\mcJ(\mfg)\iso \mbU_\mrR^\lambda(\mfg)$. The advantage of Definition \ref{D:URg} is that it does not depend on any choice of bases for $V$ or $\End(V)$. 
\end{remark}
\begin{remark}
Henceforth, we shall always use the notation  $\mrL^\pm$ and $\Lz^\pm$  to refer to the images of the elements \eqref{mrL} and \eqref{Lz} in $\End(V)\otimes \URg$. In the rare instances where it is necessary to refer to their preimages in $\End(V)\otimes T(\mathscr{E})$ (as will be the case in Proposition \ref{P:grading}), we will write $\tilde{\mrL}^\pm$ and $\tilde{\Lz}^\pm$.
\end{remark}

Let us now turn to describing the Hopf algebra structure on $\URg$. Since the weight zero component $\Lz^\pm$ of $\mrL^\pm$ is invertible,  $\mrL^\pm$ is itself an invertible element in 
\begin{equation*}
\bigoplus_{\beta\in \msQ_\pm} \End(V)_\beta \otimes \URg \subset \End(V)\otimes \URg.
\end{equation*}
 Given this observation, it follows by a standard argument that $\URg$ admits the structure of a Hopf algebra with coproduct $\Delta_\mrR$, antipode $S_\mrR$ and counit $\veps_\mrR$ uniquely determined by
\begin{equation}\label{URg-Hopf}
\Delta_\mrR(\mrL^\pm)=\mrL_{[1]}^\pm \mrL_{[2]}^\pm,\quad S_\mrR(\mrL^\pm)=(\mrL^\pm)^{-1} \quad \text{ and }\quad \veps_\mrR(\mrL^\pm)=I. 
\end{equation}
We refer the reader to Propositions 8.32 and 9.1 of \cite{KS-book}, for instance, in addition to Remark 5.8 of \cite{GRW-RQUE} and Theorems 1 and 9 of the foundational paper \cite{FRT} for further details.

As the next result illustrates, the $\msQ$-grading on $\End(V)$ naturally induces a $\msQ$-grading on $\URg$ compatible with its Hopf algebra structure. 
\begin{proposition}\label{P:grading}

There is a unique $\msQ$-grading on the Hopf algebra $\URg$ with the property that 
\begin{equation*}
\mrL_\beta^\pm \in \End(V)_\beta\otimes \URg_\beta \quad \forall \quad \beta \in \msQ. 
\end{equation*}
\end{proposition}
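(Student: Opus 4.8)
The plan is to lift a grading from the free algebra $T(\mathscr{E})$ and then check that the defining ideal of Definition \ref{D:URg} is homogeneous. First I would equip the generating space $\mathscr{E}=\mathscr{E}^+\oplus\mathscr{E}^-$ with the $\msQ$-grading obtained by \emph{reversing} the weight grading it inherits from $\End(V)$: an element $x\in \End(V)_\gamma\cap\mathscr{E}$ is declared to be homogeneous of degree $-\gamma$. Since $\mathscr{E}$ is a finite-dimensional $\msQ$-graded vector space, this induces a $\msQ$-grading $T(\mathscr{E})=\bigoplus_\delta T(\mathscr{E})_\delta$ on the tensor algebra. The purpose of the sign reversal is visible in \eqref{mrL}: writing $\Omega^+=\sum_k a_k\otimes b_k$ with $\{a_k\}$ a weight basis of $\mathscr{E}^+$ and $\{b_k\}$ the dual basis of $\mathscr{E}^-$, the nondegeneracy of $\langle\,,\,\rangle_+$ together with the grading $\langle\End(V)_\alpha,\End(V)_\beta\rangle\subset\Q(q)_{\alpha+\beta}$ forces $a_k\in\End(V)_{\beta_k}$ and $b_k\in\End(V)_{-\beta_k}$, so that $\imath^-(b_k)$ has $T(\mathscr{E})$-degree $\beta_k$. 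Hence $\tilde{\mrL}^+_\beta\in\End(V)_\beta\otimes T(\mathscr{E})_\beta$, and symmetrically $\tilde{\mrL}^-_\beta\in\End(V)_\beta\otimes T(\mathscr{E})_\beta$; in particular $\tilde{\Lz}^\pm\in\End(V)_0\otimes T(\mathscr{E})_0$.

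The crux is to show the defining ideal $J$ is $\msQ$-homogeneous. I would combine the $\End(V)^{\otimes n}$-weight grading with the $T(\mathscr{E})$-grading into the single \emph{difference} grading $\mathrm{tot}:=(\text{total }\End(V)^{\otimes n}\text{-weight})-(T(\mathscr{E})\text{-degree})$ on $\End(V)^{\otimes n}\otimes T(\mathscr{E})$, which is $\msQ$-valued and additive under multiplication. By the previous paragraph every homogeneous component of $\tilde{\mrL}^\pm$ (placed in any tensor slot) has $\mathrm{tot}=0$; moreover $\mrR$ has $\mathrm{tot}=0$, since it carries no $T(\mathscr{E})$-degree and is of total $\End(V)^{\otimes 2}$-weight zero: by \eqref{R:def} the diagonal factor $\Rd$ preserves weight spaces while $\pi^{\otimes2}(\mathscr{R}^+_\nu)\in\End(V)_\nu\otimes\End(V)_{-\nu}$ by the grading of $\mathscr{R}^+$. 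As $\mathrm{tot}$ is multiplicative, both sides of each of \eqref{Lz+Lz-}, \eqref{RLL+-} and \eqref{RLL} lie in $\mathrm{tot}$-degree $0$, i.e. in $\bigoplus_\delta(\End(V)^{\otimes n})_\delta\otimes T(\mathscr{E})_\delta$. Expanding against the weight basis of matrix units $E_{a_1b_1}\otimes\cdots\otimes E_{a_nb_n}$ of $\End(V)^{\otimes n}$ (each of definite weight $\delta=\sum_j(\mu_{a_j}-\mu_{b_j})$) therefore produces $T(\mathscr{E})$-coefficients that are homogeneous, the coefficient indexed by a weight-$\delta$ matrix unit having degree $\delta$. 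Since these coefficients generate $J$, the ideal $J$ is homogeneous.

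It follows immediately that the grading descends to $\URg=T(\mathscr{E})/J$, yielding $\URg=\bigoplus_\beta\URg_\beta$ with $\mrL^\pm_\beta\in\End(V)_\beta\otimes\URg_\beta$ for all $\beta\in\msQ$. For uniqueness I would use that $\URg$ is generated as a $\Q(q)$-algebra by the coefficients of $\mrL^+$ and $\mrL^-$, which are the images of a basis of $\mathscr{E}$: the stated property fixes the degree of each such coefficient, and an algebra grading is determined by its values on a generating set. To see compatibility with the Hopf structure \eqref{URg-Hopf}, I would read off from $\Delta_\mrR(\mrL^\pm)=\mrL^\pm_{[1]}\mrL^\pm_{[2]}$ that $\Delta_\mrR(\URg_\beta)\subset\bigoplus_{\alpha+\gamma=\beta}\URg_\alpha\otimes\URg_\gamma$, from $\veps_\mrR(\mrL^\pm)=I$ that $\veps_\mrR$ annihilates $\URg_\beta$ for $\beta\neq0$, and from $S_\mrR(\mrL^\pm)=(\mrL^\pm)^{-1}$ that $S_\mrR(\URg_\beta)\subset\URg_\beta$; here the degree-$\beta$ component of $(\mrL^\pm)^{-1}$ again lies in $\End(V)_\beta\otimes\URg_\beta$ because $\mrL^\pm$ is $\mathrm{tot}$-homogeneous of degree $0$ and invertible inside the $\mathrm{tot}$-graded algebra $\bigoplus_{\beta\in\msQ_\pm}\End(V)_\beta\otimes\URg$, whence so is its inverse. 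As these maps are (anti)homomorphisms determined on generators, gradedness propagates to all of $\URg$.

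The main obstacle I anticipate is purely the bookkeeping in the second paragraph: choosing the reversed grading on $T(\mathscr{E})$ so that $\mrL^\pm$ lands on the diagonal $\End(V)_\beta\otimes T(\mathscr{E})_\beta$, and confirming that $\mrR$ is genuinely of total weight zero. Once the difference grading $\mathrm{tot}$ is in place and these two facts are verified, the homogeneity of $J$—and hence the entire statement—is essentially automatic.
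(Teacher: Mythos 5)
Your proposal is correct and follows essentially the same route as the paper: the paper likewise equips $\mathscr{E}^\pm$ with the opposite (reversed) weight grading, observes that $\tilde{\mrL}^\pm$ and the relations are then homogeneous of degree zero in the graded algebra $(\End(V)^t)^{\otimes n}\otimes T(\mathscr{E})$ --- your difference grading $\mathrm{tot}$ is exactly this grading up to a sign convention --- so the defining ideal is homogeneous, and it checks Hopf compatibility by the same degree-zero observation for $\mrL^\pm$, $(\mrL^\pm)^{-1}$, $\mrL^\pm_{[1]}\mrL^\pm_{[2]}$ and $I$. Your only additions are the explicit uniqueness argument and the explicit verification that $\mrR$ has total weight zero, both of which the paper leaves implicit.
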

\begin{proof}
Let us equip the vector spaces $\mathscr{E}^+$ and $\mathscr{E}^-$ from the beginning of Section \ref{ssec:URg-def} with their opposite $\msQ$-gradings, in which $(\mathscr{E}^\pm)_\beta=\End(V)_{-\beta}$ and $(\mathscr{E}^\pm)_{-\beta}=0$ for all $\beta\in \msQ_\pm$. Then the tensor algebra $T(\mathscr{E})$  on the $\msQ$-graded space $\mathscr{E}=\mathscr{E}^+\oplus \mathscr{E}^-$ is graded as an algebra and, since the trace form $\langle\,,\, \rangle$ is $\msQ$-graded, the $\End(V)_\beta\otimes T(\mathscr{E})$ component $\tilde{\mrL}^\pm_\beta$ of $\tilde{\mrL}^\pm:=(1\otimes \imath^\mp)(\Omega^\pm)$ satisfies 
\begin{equation*}
\tilde{\mrL}^\pm_\beta\in \End(V)_\beta\otimes T(\mathscr{E})_\beta \quad \forall \quad \beta\in \msQ.
\end{equation*}
In particular, if we let $\End(V)^{t}$ denote $\End(V)$ equipped with its opposite $\msQ$-grading, in which $\End(V)^t_\beta:=t(\End(V)_\beta)=\End(V)_{-\beta}$ for all $\beta\in \msQ$, then $\tilde{\mrL}^\pm$ is a degree zero element of the $\msQ$-graded algebra $\End(V)^t\otimes T(\mathscr{E})$.
Next, let $(\eps_1,\eps_2)$ take value $(\pm,\pm)$ or $(+,-)$. Since, the elements
\begin{equation*}
\tilde{\Lz}^\pm\tilde{\Lz}^\mp-I \quad \text{ and }\quad \mrR\tilde{\mrL}_1^{\eps_1}\tilde{\mrL}_2^{\eps_2}-\tilde{\mrL}_2^{\eps_2}\tilde {\mrL}_1^{\eps_1}\mrR
\end{equation*}
are homogeneous of degree zero in the $\msQ$-graded algebras $\End(V)^t\otimes T(\mathscr{E})$ and $ (\End(V)^t)^{\otimes 2}\otimes T(\mathscr{E})$, the ideal in $T(\mathscr{E})$ generated by the relations \eqref{Lz+Lz-}--\eqref{RLL} is graded. Therefore, we may conclude that $\URg$ is a $\msQ$-graded algebra with $\mrL_\beta^\pm$  laying in $\End(V)_\beta\otimes \URg_\beta$ for each $\beta \in \msQ_\pm$.  
Moreover, since the matrices 
\begin{equation*}
\mrL^\pm, \quad (\mrL^\pm)^{-1}, \quad \mrL_{[1]}^\pm \mrL_{[2]}^\pm\quad \text{ and }\quad I 
\end{equation*}
are homogeneous elements of degree zero in the $\msQ$-graded algebras $\End(V)^t\otimes \URg$,  $\End(V)^t\otimes \URg$, $\End(V)^t\otimes \URg^{\otimes 2}$ and $\End(V)^t$, respectively, it follows from \eqref{URg-Hopf} that the structure maps $\Delta_\mrR$, $S_\mrR$ and $\veps_\mrR$ are $\msQ$-graded. \qedhere
\end{proof}
To conclude this subsection, we establish some basic properties satisfied by the weight zero components $\Lz^+$ and $\Lz^-$ of $\mrL^+$ and $\mrL^-$. 
\begin{lemma}\label{L:Lz-basic}
The matrix $\Lz:=\Lz^+$ satisfies the relations
\begin{equation}
\label{Lz:basic}
\Lz_1\Lz_2=\Lz_2\Lz_1,\quad \mrR\Lz_1\Lz_2=\Lz_2\Lz_1\mrR\quad \text{ and }\quad \Lz_1\mrL_2^\pm\Lz_1^{-1}=\Rd^{-1}\mrL_2^\pm \Rd.
\end{equation}
In particular, the coefficients of $\Lz^{\pm 1}$ generate a commutative subalgebra of $\URg$.
\end{lemma}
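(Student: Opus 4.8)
The plan is to extract all three relations in \eqref{Lz:basic} directly from the defining matrix equations of $\URg$ by isolating their weight-zero (or weight-$\beta$) homogeneous components. The key observation is that $\mrR = \Rd\cdot \pi^{\otimes 2}(\mathscr{R}^+)$ where $\Rd$ is diagonal and $\pi^{\otimes 2}(\mathscr{R}^+) = \sum_{\nu\in\msQ_+}\pi^{\otimes 2}(\mathscr{R}_\nu^+)$ has $\pi^{\otimes 2}(\mathscr{R}_0^+) = I$, so modulo the diagonal factor $\mrR$ is unipotent with leading term $\Rd$. First I would analyze the relation $\mrR\mrL_1^+\mrL_2^+ = \mrL_2^+\mrL_1^+\mrR$ from \eqref{RLL}, using the $\msQ$-grading from Proposition \ref{P:grading}: writing each side as a sum over $(\beta_1,\beta_2)\in\msQ_+\times\msQ_+$ of components landing in $\End(V)_{\beta_1}\otimes\End(V)_{\beta_2}\otimes\URg_{\beta_1+\beta_2}$, I would extract the $(0,0)$-component. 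Since $\mrL_0^\pm = \Lz^\pm$ and $\mathscr{R}_\nu^+$ contributes only for $\nu\in\msQ_+$ (raising the first tensor weight), the only terms with total first-and-second weight zero come from $\pi^{\otimes 2}(\mathscr{R}_0^+) = I$, leaving $\Rd\,\Lz_1\Lz_2 = \Lz_2\Lz_1\,\Rd$. As $\Rd$ preserves all weight spaces and $\Lz_1,\Lz_2$ act within weight zero on their respective factors, $\Rd$ commutes with $\Lz_1\Lz_2$, yielding $\Lz_1\Lz_2 = \Lz_2\Lz_1$.

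For the second relation $\mrR\Lz_1\Lz_2 = \Lz_2\Lz_1\mrR$, I would substitute the just-proved commutativity $\Lz_1\Lz_2 = \Lz_2\Lz_1$ so that the claim becomes $\mrR\Lz_1\Lz_2 = \Lz_1\Lz_2\mrR$, i.e. that $\Lz_1\Lz_2$ commutes with $\mrR$; alternatively I would derive it directly as a consequence of the full $(0,0)$-extraction above combined with the commutativity of $\Rd$ with $\Lz_1\Lz_2$. Concretely, since $\mrR = \Rd\cdot\pi^{\otimes 2}(\mathscr{R}^+)$ and the $\mathscr{R}^+_\nu$ terms only shift weights in a way that, when sandwiched between weight-zero operators $\Lz_1\Lz_2$, must be compensated, the grading argument forces $\mrR\Lz_1\Lz_2 = \Lz_2\Lz_1\mrR$ to hold as the homogeneous degree-zero shadow of \eqref{RLL}.

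The third relation $\Lz_1\mrL_2^\pm\Lz_1^{-1} = \Rd^{-1}\mrL_2^\pm\Rd$ is the most substantive and I expect it to be the main obstacle, since it couples $\Lz$ in the first factor with the full matrix $\mrL^\pm$ in the second. I would obtain it by extracting the component of \eqref{RLL} (and \eqref{RLL+-}) that is weight zero in the \emph{first} tensor factor but arbitrary in the second: in $\mrR\mrL_1^\pm\mrL_2^\pm = \mrL_2^\pm\mrL_1^\pm\mrR$, the terms with first-factor weight zero pick out $\Lz_1 = \mrL_{1,0}^\pm$ on both sides, and using $\mrR = \Rd\cdot\pi^{\otimes2}(\mathscr{R}^+)$ together with the fact that $\pi^{\otimes2}(\mathscr{R}^+)$ has first-factor weight-zero part equal to $I$, the surviving equation reads $\Rd\,\Lz_1\mrL_2^\pm = \mrL_2^\pm\Lz_1\Rd$. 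Rearranging and using that $\Lz_1$ is invertible gives $\Lz_1\mrL_2^\pm\Lz_1^{-1} = \Rd^{-1}\mrL_2^\pm\Rd$ after commuting $\Lz_1$ past $\Rd$ (both act diagonally on weight spaces of the first factor, so they commute). The delicate point, and where care is needed, is justifying that only the $\nu=0$ piece of $\mathscr{R}^+$ survives the weight-zero projection in the first factor—this uses that $\pi^{\otimes2}(\mathscr{R}_\nu^+)$ lies in $\End(V)_\nu\otimes\End(V)_{-\nu}$ for $\nu\in\msQ_+$, so only $\nu=0$ has first-factor weight zero. Finally, the concluding sentence about commutativity of the subalgebra generated by $\Lz^{\pm1}$ is immediate from the first relation $\Lz_1\Lz_2 = \Lz_2\Lz_1$ upon interpreting it entrywise in a weight basis.
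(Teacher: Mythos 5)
Your overall strategy---extracting homogeneous components of the defining relations \eqref{RLL} and \eqref{RLL+-} with respect to the $\End(V)$-weights and the $\msQ$-grading of $\URg$---is exactly the paper's, and your derivations of the first and third relations are correct. The paper merely orders things differently: it first obtains the second relation by projecting \eqref{RLL} onto $\End(V)^{\otimes 2}\otimes\URg_0$, then the third by projecting onto $\End(V)_0\otimes\End(V)\otimes\URg$ just as you do, and finally deduces $\Lz_1\Lz_2=\Lz_2\Lz_1$ by taking the $\URg_0$-component of the third relation (using that $\Lz_2$ commutes with $\Rd$); your route to the first relation via the $(0,0)$ weight component is an equally valid variant. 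You are in fact slightly more careful than the paper in noting that the minus case of the third relation requires \eqref{RLL+-} rather than \eqref{RLL} alone.

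The weak point is the second relation. The alternative you propose---deriving $\mrR\Lz_1\Lz_2=\Lz_2\Lz_1\mrR$ from ``the full $(0,0)$-extraction above combined with the commutativity of $\Rd$ with $\Lz_1\Lz_2$''---cannot work: the $(0,0)$ weight extraction kills every term $\Rd\,\pi^{\otimes 2}(\mathscr{R}^+_\nu)$ of $\mrR$ with $\nu\neq 0$, so it only yields the identity with the diagonal part $\Rd$ in place of $\mrR$, and together with $[\Rd,\Lz_1\Lz_2]=0$ this gives back commutativity of the $\Lz$'s and says nothing about the off-diagonal part of $\mrR$. Your closing sentence (``the homogeneous degree-zero shadow of \eqref{RLL}'') names the correct statement, but the justification offered (``weights \dots must be compensated'') is not an argument. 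The fix is one line, and it is the paper's proof: project \eqref{RLL} onto $\End(V)^{\otimes 2}\otimes\URg_0$, i.e.\ use only the $\msQ$-grading of $\URg$ and leave the $\End(V)$-weights unconstrained. Since $\mrL^+_{\beta}\in\End(V)_\beta\otimes\URg_\beta$ by Proposition \ref{P:grading}, and $\beta_1+\beta_2=0$ with $\beta_1,\beta_2\in\msQ_+$ forces $\beta_1=\beta_2=0$, while $\mrR$ is a constant matrix carrying no $\URg$-degree, this projection reads precisely $\mrR\Lz_1\Lz_2=\Lz_2\Lz_1\mrR$, with the full $\mrR$ surviving on both sides.
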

\begin{proof}
The second relation follows by projecting $\mrR \mrL_1^+\mrL_2^+=\mrL_2^+\mrL_1\mrR$ onto $\End(V)^{\otimes 2} \otimes \URg_0$, while the third relation follows by projecting that same identity onto $\End(V)_0\otimes \End(V)\otimes \URg$. 
As $\Lz_2$ commutes with $\Rd$, the identity $\Lz_1\Lz_2=\Lz_2\Lz_1$ follows by taking the $\URg_0$ component of the third relation of \eqref{Lz:basic}. \qedhere
\end{proof}

\subsection{Automorphisms}\label{ssec:URg-aut}

We now turn towards describing a natural family of algebra automorphisms of $\URg$. Let $\mathrm{G}_\mrR(V)$ denote the group
\begin{equation*}
\mathrm{G}_\mrR(V)=\{D\in \GLgr:\mathrm{Ad}(D\otimes D)(\mrR)=\mrR\},
\end{equation*}
where we recall from Section \ref{ssec:Uqg-reps} that $\GLgr=\prod_{\mu}\mathrm{GL}(V_\mu)$ is the group of invertible, degree zero linear automorphisms of the vector space $V$. In addition, we recall that $A\in \GLgr$ is the symmetric matrix defined in Lemma \ref{L:A-def}.
\begin{proposition}\label{P:R-aut}
Let $D\in \mathrm{G}_\mrR(V)$ and $\zeta\in \Q(q)^\times$. Then there exists $\Q(q)$-algebra automorphisms  $\upvartheta_D, \upchi_\zeta$ and $\chev$ of $\URg$	uniquely determined by  
\begin{equation*}
\upvartheta_D(\mrL^\pm)=D\mrL^\pm D^{-1}, \quad \upchi_\zeta(\mrL^\pm)=\mrL^\pm \zeta^{\pm 1}
\quad \text{ and }\quad \chev(\mrL^\pm)=A^{-1}(\mrL^\mp)^t A.
\end{equation*}
 Moreover, $\upvartheta_D$ is a Hopf algebra automorphism satisfying $\upvartheta_D^{-1}=\upvartheta_{D^{-1}}$, while $\chev$ is a coalgebra anti-automorphism intertwining $S_\mrR$ and $S^{-1}_\mrR$ and satisfying $\chev^{-1}=\chev$.
\end{proposition}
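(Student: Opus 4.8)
The plan is to verify each of the three maps by checking it preserves the defining relations of $\URg$, then establish the stated Hopf-theoretic compatibilities. Throughout I will use that $\URg$ is presented by Definition~\ref{D:URg}, so a map is well-defined as soon as the images of $\mrL^\pm$ satisfy \eqref{Lz+Lz-}--\eqref{RLL}. For $\upvartheta_D$, writing $\upvartheta_D(\mrL^\pm)=D\mrL^\pm D^{-1}$ (where $D$ acts on the $\End(V)$ factor), the zero-weight part transforms as $D\Lz^\pm D^{-1}$, so $\upvartheta_D(\Lz^+)\upvartheta_D(\Lz^-)=D\Lz^+\Lz^- D^{-1}=I$, giving \eqref{Lz+Lz-}. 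For the $RLL$-relations, conjugating \eqref{RLL+-} and \eqref{RLL} by $D_1D_2=D\otimes D$ and using the defining property $\mathrm{Ad}(D\otimes D)(\mrR)=\mrR$ of $\mathrm{G}_\mrR(V)$ moves every $\mrR$ past the $D$'s unchanged; this is exactly why $D$ is required to lie in $\mathrm{G}_\mrR(V)$. The inverse is clearly $\upvartheta_{D^{-1}}$.

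For $\upchi_\zeta$, scaling $\mrL^\pm\mapsto \zeta^{\pm 1}\mrL^\pm$ multiplies $\mrL_1^+\mrL_2^-$ and $\mrL_2^-\mrL_1^+$ by $\zeta\cdot\zeta^{-1}=1$ in \eqref{RLL+-}, multiplies both sides of each relation in \eqref{RLL} by $\zeta^{\pm 2}$, and fixes $\Lz^+\Lz^-$ (again $\zeta\zeta^{-1}=1$), so all relations are preserved; the inverse is $\upchi_{\zeta^{-1}}$. For $\chev$, set $\chev(\mrL^\pm)=A^{-1}(\mrL^\mp)^t A$, where $(-)^t$ denotes the transpose $t$ applied to the $\End(V)$ factor. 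Because $t$ is an algebra anti-automorphism of $\End(V)$, applying $t$ to the $\End(V)^{\otimes 2}$ factor reverses products there, and one must track how $t_1,t_2$ interacts with $\mrR$. The crucial input is Lemma~\ref{L:R^t}, which gives $\mrR^{t_1,t_2}=\mathrm{Ad}(A\otimes A)(\mrR_{21})$. Transposing \eqref{RLL+-} in both $\End(V)$ factors and conjugating by $A_1^{-1}A_2^{-1}$ should convert the relation for $(\mrL^+,\mrL^-)$ into the relation verifying $\chev$ preserves $RLL$, after swapping tensor legs $1\leftrightarrow 2$; the leg-swap is what turns $\mrR$ into $\mrR_{21}$ and is absorbed precisely by Lemma~\ref{L:R^t}. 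That $\chev$ is an \emph{anti}-automorphism forces one to reverse the order of the two $\mrL$ factors, which is consistent with the $1\leftrightarrow 2$ swap.

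The main obstacle will be the bookkeeping for $\chev$: one must carefully apply $t$ to the correct tensor factor, reverse the multiplication order induced by the anti-automorphism, and then match the resulting equation to the transposed/leg-swapped form of \eqref{RLL+-} using Lemma~\ref{L:R^t}, all while keeping $A_1,A_2$ and $\Rd$ in their proper places. The involutivity $\chev^2=\id$ follows from $A=A^t$ (Lemma~\ref{L:A-def}) together with $t^2=\id$: one computes $\chev^2(\mrL^\pm)=A^{-1}\bigl(A^{-1}(\mrL^\pm)^t A\bigr)^t A=A^{-1}A^t\mrL^\pm (A^{-1})^t A=\mrL^\pm$, using symmetry of $A$. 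Finally, for the Hopf-structure claims I would pass to the coproduct formulas \eqref{URg-Hopf}. That $\upvartheta_D$ is a Hopf automorphism is immediate since conjugation by $D$ on the $\End(V)$ leg commutes with the comultiplication $\Delta_\mrR(\mrL^\pm)=\mrL^\pm_{[1]}\mrL^\pm_{[2]}$ and fixes the counit value $I$. For $\chev$, I would check it is a coalgebra anti-homomorphism by computing $\Delta_\mrR(\chev(\mrL^\pm))$ and comparing with $(\chev\otimes\chev)\circ\Delta_\mrR^{\mathrm{op}}(\mrL^\pm)$; since transpose and conjugation by $A$ act only on the $\End(V)$ factor they commute with the comultiplication on the algebra legs, and the order-reversal on the two tensor copies of $\URg$ is what makes it \emph{anti}-comultiplicative. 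The relation $\chev\circ S_\mrR=S_\mrR^{-1}\circ\chev$ then follows from $S_\mrR(\mrL^\pm)=(\mrL^\pm)^{-1}$ together with the fact that $\chev$ sends $\mrL^\pm$ to an expression in $\mrL^\mp$ and transpose/inverse commute appropriately; concretely, $\chev(S_\mrR(\mrL^\pm))=\chev((\mrL^\pm)^{-1})=A^{-1}((\mrL^\mp)^{-1})^tA=(A^{-1}(\mrL^\mp)^tA)^{-1}=S_\mrR^{-1}(\chev(\mrL^\pm))$.
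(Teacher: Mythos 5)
Most of your proposal tracks the paper's own argument: the direct relation checks for $\upvartheta_D$ and $\upchi_\zeta$ (which the paper delegates to its reference), the $RLL$ bookkeeping for $\chev$ via Lemma~\ref{L:R^t} together with conjugation by $A_1A_2$ and the leg swap, the involutivity computation from $A=A^t$, and the coalgebra anti-homomorphism check are all the right mechanisms. The genuine gap is your final display, i.e.\ the claim $\chev\circ S_\mrR=S_\mrR^{-1}\circ\chev$. The transpose $t$ acts only on the $\End(V)$ leg of $\End(V)\otimes\URg$, so for matrices with noncommuting entries one has neither $(XY)^t=Y^tX^t$ nor $(M^{-1})^t=(M^t)^{-1}$; your middle equality $A^{-1}((\mrL^\mp)^{-1})^tA=(A^{-1}(\mrL^\mp)^tA)^{-1}$ rests on the latter identity, which is false here: the entries of $((\mrL^\mp)^{-1})^t$ are $S_\mrR$ applied to the entries of $(\mrL^\mp)^t$, whereas one can show that the entries of $((\mrL^\mp)^t)^{-1}$ are $S_\mrR^{-1}$ applied to them, and these differ because $S_\mrR^2\neq\id$. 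Worse, your last equality $(A^{-1}(\mrL^\mp)^tA)^{-1}=S_\mrR^{-1}(\chev(\mrL^\pm))$ --- which is the entire content of the intertwining claim --- is asserted, not proved: \eqref{URg-Hopf} only gives $(\id\otimes S_\mrR)(\mrL^\mp)=(\mrL^\mp)^{-1}$, and it is a genuinely separate fact that applying $S_\mrR^{-1}$ entrywise to the \emph{transposed} matrix produces an inverse. If the pattern ``the antipode-type map sends a generating matrix to its inverse'' were valid for $S_\mrR^{-1}$ without further argument, it would be equally valid for $S_\mrR$, yielding $S_\mrR(\chev(\mrL^\pm))=(\chev(\mrL^\pm))^{-1}=S_\mrR^{-1}(\chev(\mrL^\pm))$ on a generating set and hence $S_\mrR^2=\id$, which is false (after Theorem~\ref{T:main}, $S_\mrR^2$ rescales the preimage of $E_i$ by $q_i^{2}$). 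The paper avoids this entirely: having shown that $\chev$ is an algebra automorphism satisfying $\Delta_\mrR^{\mathrm{op}}\circ\chev=(\chev\otimes\chev)\circ\Delta_\mrR$ and $\veps_\mrR\circ\chev=\veps_\mrR$, it concludes that $\chev$ is a bialgebra isomorphism $\URg\iso\URg^{\mathrm{cop}}$, whose antipode is $S_\mrR^{-1}$, so the intertwining relation follows from uniqueness of the antipode. (Alternatively you could prove $(\id\otimes S_\mrR^{-1})\bigl((\mrL^\mp)^t\bigr)=\bigl((\mrL^\mp)^t\bigr)^{-1}$ by hand, by applying the anti-homomorphism $S_\mrR^{-1}$ to the entrywise antipode identities encoded in $S_\mrR(\mrL^\mp)=(\mrL^\mp)^{-1}$; the transposition is exactly what lets the reversed products reassemble into matrix products. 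But some such argument must appear.)

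Two further points. You never verify that $\chev$ preserves \eqref{Lz+Lz-}; this is not automatic, for the same noncommutativity reason, and the paper's proof invokes Lemma~\ref{L:Lz-basic} (the coefficients of $\Lz^{\pm}$ commute) precisely in order to justify $(\Lz^\pm)^t(\Lz^\mp)^t=(\Lz^\mp\Lz^\pm)^t=I$. Finally, your remark that ``$\chev$ is an \emph{anti}-automorphism'' of the algebra, used to motivate reversing the $\mrL$-factors, misreads the statement: $\chev$ is an algebra \emph{automorphism} of $\URg$ and only a \emph{coalgebra} anti-automorphism; the order reversal in the $RLL$ computation comes from the transpose acting on the $\End(V)$ legs (together with the swap of those legs), not from any reversal of products in $\URg$ --- indeed, the entries in $\URg$ keep their original order throughout that computation.
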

\begin{proof}
We will prove that the assignment $\chev(\mrL^\pm)=A^{-1}(\mrL^\mp)^t A$ extends to an automorphism of $\URg$ with the claimed properties; we refer the reader to the proof of Proposition 5.4 in \cite{GRW-RQUE} for the assertions involving $\upvartheta_D$ and $\upchi_\zeta$. 

First note that, since $t(\End(V)_\beta)=\End(V)_{-\beta}$ (see \eqref{t-reverse}) and $A\in \GLgr$, we have 
\begin{equation*}
\chev(\mrL^\pm_\beta)=A^{-1}(\mrL^\mp_{-\beta})^t A \quad \forall\quad \beta\in \msQ.
\end{equation*}
In particular, $\chev(\Lz^\pm)=A^{-1}(\Lz^\pm)^t A$. We thus have 
\begin{equation*}
\chev(\Lz^\pm)\chev(\Lz^\mp)=A^{-1}(\Lz^\pm)^t(\Lz^\mp)^tA=I,
\end{equation*}
where in the last equality we have used that, 
 by Lemma \ref{L:Lz-basic}, the coefficients of $\Lz^+$ and $\Lz^-$ generate a commutative subalgebra of $\URg$, and therefore $(\Lz^\pm)^t(\Lz^\mp)^t=(\Lz^\mp \Lz^\pm)^t=I$. Thus, the assignment $\chev$ preserves the relation \eqref{Lz+Lz-}. 
 
To show that the relations \eqref{RLL+-} and \eqref{RLL} are preserved, suppose more generally that $\msS,\msT\in \End(V)\otimes \URg$ satisfy $\mrR \msS_1 \msT_2=\msT_2 \msS_1 \mrR$. Let us show $\chev(S):=A^{-1}\msT^t A$ and $\chev(T):=A^{-1}\msS^t A$ do as well. Using that $\mrR^{t_1,t_2}_{21}=A_1A_2 \mrR A_1^{-1}A_2^{-1}$ (see Lemma \ref{L:R^t}), we obtain
\begin{align*}
\mrR \chev(\msS)_1 \chev(\msT)_2
&=
\mrR A_1^{-1} \msT_1^t A_1 A_2^{-1} \msS_2^t A_2 
\\
&=\mrR A_1^{-1}A_2^{-1}  \msT_1^t \msS_2^t A_1 A_2\\
&
=A_1^{-1}A_2^{-1}\mrR_{21}^{t_1,t_2} \msT_1^t \msS_2^t A_1 A_2\\
&
= A_1^{-1}A_2^{-1} (\msT_2 \msS_1 \mrR)^{t_1,t_2}_{21}A_1 A_2\\
&
= \chev(\msT)_2 \chev(\msS)_1  A_2^{-1}A_1^{-1}\mrR_{21}^{t_1,t_2}A_1 A_2
=
\chev(\msT)_2 \chev(\msS)_1 \mrR,
\end{align*}
as desired. Hence, we may conclude that the assignment $\chev(\mrL^\pm)=A^{-1}(\mrL^\mp)^tA$ uniquely extends to an algebra endomorphism of $\URg$. Moreover, since $A$ is symmetric, we have 
\begin{equation*}
\chev^2(\mrL^\pm)=\chev(A^{-1}(\mrL^\mp)^tA)= A^{-1} A^t \mrL^\pm  (A^{-1})^t A=\mrL^\pm,
\end{equation*}
which shows that $\chev$ is an involution. 

We are left to establish that $\chev$ is a coalgebra anti-automorphism satisfying $\mathrm{S}_\mrR^{-1}\circ \chev =\chev \circ \mathrm{S}_\mrR$. This is equivalent to the assertion that $\chev$ is a Hopf algebra isomorphism $\URg\iso \URg^{\mathrm{cop}}$, where $\URg^{\mathrm{cop}}$ is the co-opposite Hopf algebra to $\URg$. By the uniqueness of the antipode, it is sufficient to verify that 
\begin{equation*}
\Delta^{\mathrm{op}}_\mrR \circ \chev = (\chev \otimes \chev) \circ \Delta_\mrR \quad \text{ and }\quad \veps_\mrR\circ \chev=\veps_\mrR.
\end{equation*}
The latter identity clearly holds; let's establish the former. We have 
\begin{equation*}
\Delta^{\mathrm{op}}_\mrR (\chev(\mrL^\pm))= \Delta^{\mathrm{op}}_\mrR (A^{-1}(\mrL^\mp)^t A)
=
A^{-1} (\mrL^\mp_{[2]}\mrL^\mp_{[1]})^t A=A^{-1} (\mrL^\mp_{[1]})^tA \cdot A^{-1}(\mrL^\mp_{[2]})^t A,
\end{equation*}
which is precisely $(\chev \otimes \chev)(\Delta_\mrR(\mrL^\pm))$, as desired. \qedhere
\end{proof}
\begin{remark}
We shall henceforth refer to $\chev$ as the \textit{Chevalley involution} on $\URg$. It has the property that 
\begin{equation*}
\chev(\URg_\beta)\subset \URg_{-\beta} \quad \forall \quad \beta \in \msQ
\end{equation*}
and, as a consequence of Theorem \ref{T:Gen} and the results of Section \ref{sec:URg=Uq}, it indeed corresponds to an involution on $\bUqg$ extending the standard Chevalley involution $\omega$ on $U_q(\mfg)$.
\end{remark}
We now consider a particular subclass of the automorphisms $\upvartheta_D$ related to the $\msQ$-grading introduced in Proposition \ref{P:grading}.
Observe that there is an inclusion of groups $\msQ\into \mathrm{G}_\mrR(V)$ given by
\begin{equation*}
\beta \mapsto \pi(K_\beta)=\sum_{\mu}q^{(\beta,\mu)}\mathrm{Id}_{V_\mu} \quad \forall\; \beta\in \msQ.
\end{equation*}
Thus, we get an action of $\msQ$ on $\URg$ by Hopf algebra automorphisms, given by $\beta\mapsto \upvartheta_\beta:=\upvartheta_{\pi(K_\beta)}$. Moreover, one has 
\begin{equation*}
\upvartheta_\alpha(\mrL_\beta^\pm)=q^{(\alpha,\beta)}\mrL_\beta^\pm  \quad \forall \quad \alpha,\beta\in \msQ.
\end{equation*}
It follows that the $\msQ$-grading on $\URg$ introduced in the previous subsection is precisely the simultaneous eigenspace decomposition of the family $\{\upvartheta_\beta\}_{\beta\in \msQ}$. That is, for each $\beta\in \msQ$, one has
\begin{equation*}
\URg_\beta=\{x\in \URg: \upvartheta_\alpha(x)=q^{(\alpha,\beta)}x \quad \forall\; \alpha\in \msQ\}.
\end{equation*}
We note that this is entirely analogous to the situation which unfolds for the $\hbar$-adic analogue of $\URg$ studied in \cite{GRW-RQUE}; see Corollary 5.9 therein.

\subsection{The semisimplicity of $\Lz^\pm$}\label{ssec:Lz-semi}
The goal of this subsection is to prove that the matrices $\Lz^+$ and $\Lz^-$ are diagonal with a single eigenvalue associated to each weight space $V_\mu$. To make this precise, note that since $\End(V)_0=\bigoplus_\mu \End(V_\mu)$ and $\Lz^-=(\Lz^+)^{-1}$, the matrices $\Lz^\pm$ admit decompositions 
\begin{equation*}
\Lz^\pm=\sum_{\mu} \Lz_\mu ^{\pm 1} \quad \text{ with }\quad \Lz_\mu\in \End(V_\mu)\otimes \URg,
\end{equation*}
where the summation is taken over all weights $\mu$ of $V$. 
\begin{theorem}\label{T:Lz}
For each weight $\mu$ of $V$, there exists a unique invertible element $\mathscr{l}_\mu$ of $\URg$ such that 
\begin{equation*}
\Lz_\mu=\mathrm{Id}_{V_\mu}\otimes \mathscr{l}_\mu.
\end{equation*}
Moreover, these elements are grouplike and satisfy
\begin{equation*}
\mathscr{l}_\mu x_\beta \mathscr{l}_\mu^{-1}=q^{-(\mu,\beta)}x_\beta \quad \forall \quad x_\beta\in \URg_\beta.
\end{equation*}
\end{theorem}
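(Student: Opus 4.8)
The plan is to reduce the whole statement to the single claim that each block $\Lz_\mu$ is a \emph{scalar} matrix, i.e. $\Lz_\mu=\mathrm{Id}_{V_\mu}\otimes\mathscr{l}_\mu$ for some invertible $\mathscr{l}_\mu\in\URg$; the remaining assertions then follow quickly. Granting scalarity, uniqueness of $\mathscr{l}_\mu$ is immediate (it is any diagonal entry of $\Lz_\mu$), and invertibility comes from $\Lz^-=(\Lz^+)^{-1}$. For the grouplike property I would take the weight-zero component of $\Delta_\mrR(\mrL^+)=\mrL^+_{[1]}\mrL^+_{[2]}$ from \eqref{URg-Hopf}: since $\mrL^+$ is supported in degrees $\msQ_+$, the only contribution to $\End(V)_0$ on the right is $\Lz^+_{[1]}\Lz^+_{[2]}$, whence $\Delta_\mrR(\Lz^+)=\Lz^+_{[1]}\Lz^+_{[2]}$ and $\veps_\mrR(\Lz^+)=I$; reading off the $\mu$-block gives $\Delta_\mrR(\mathscr{l}_\mu)=\mathscr{l}_\mu\otimes\mathscr{l}_\mu$ and $\veps_\mrR(\mathscr{l}_\mu)=1$. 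For the commutation relation I would use the third identity of Lemma~\ref{L:Lz-basic} together with the computation $\Rd^{-1}(\mathrm{Id}\otimes Y)\Rd=\pi(K_\nu)^{-1}\otimes Y$ for $Y\in\End(V)_\nu$; taking the $\End(V)_\nu$-component of $\Lz_1\mrL_2^\pm\Lz_1^{-1}=\Rd^{-1}\mrL_2^\pm\Rd$ shows that conjugation by $\Lz_\mu$ scales every weight-$\beta$ coefficient of $\mrL^\pm$ by $q^{-(\beta,\mu)}$. As these coefficients generate $\URg$ and the scaling is multiplicative in $\beta$, this yields $\mathscr{l}_\mu x_\beta\mathscr{l}_\mu^{-1}=q^{-(\mu,\beta)}x_\beta$ for all $x_\beta\in\URg_\beta$.

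The heart of the argument is therefore the scalarity of $\Lz_\mu$. The key observation is that $\Lz_1\Lz_2$ commutes with $\mrR$: combining the first two relations of Lemma~\ref{L:Lz-basic} gives $\mrR\Lz_1\Lz_2=\Lz_2\Lz_1\mrR=\Lz_1\Lz_2\mrR$. Writing $\mrR=\Rd\,\pi^{\otimes2}(\mathscr{R}^+)$ and using that $\Lz_1\Lz_2$ is weight-preserving (so it commutes with the diagonal operator $\Rd$ and preserves the $\End(V)^{\otimes2}$-grading), I would conclude that $\Lz_1\Lz_2$ commutes with each homogeneous component $\pi^{\otimes2}(\mathscr{R}^+_\nu)$. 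For $\nu=\alpha_i$ this reads
\[
[\,\Lz_1\Lz_2,\ \pi(E_i)\otimes\pi(F_i)\,]=0\qquad(i\in\mbI),
\]
equivalently $(\Lz\pi(E_i)\Lz^{-1})\otimes(\Lz\pi(F_i)\Lz^{-1})=\pi(E_i)\otimes\pi(F_i)$ inside $\End(V)^{\otimes2}\otimes\mcH$, where $\mcH\subset\URg$ is the commutative subalgebra generated by the entries of $\Lz^{\pm1}$.

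From this decomposable-tensor identity I would extract that $\Lz$ conjugates each $\pi(E_i)$ and $\pi(F_i)$ into a scalar multiple of itself, $\Lz\pi(F_i)\Lz^{-1}=h_i\,\pi(F_i)$ and $\Lz\pi(E_i)\Lz^{-1}=h_i^{-1}\pi(E_i)$ for a single $h_i\in\mcH^\times$ (invertibility coming from the analogous identity for $\Lz^{-1}$). Since $\Lz$ also commutes with each $\pi(K_\mu)$, it then intertwines $\pi$ with the rescaled action $\rho$ determined by $\rho(E_i)=h_i^{-1}\pi(E_i)$, $\rho(F_i)=h_i\pi(F_i)$, $\rho(K_\mu)=\pi(K_\mu)$. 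This $\rho$ is again a highest-weight action, isomorphic to $\pi$ via the weight-graded operator $T=\sum_\mu t_\mu\,\mathrm{Id}_{V_\mu}$ with $t_\mu=\prod_i h_i^{c_i}$ for $\lambda-\mu=\sum_i c_i\alpha_i$ (well defined since $\mcH$ is commutative and $\lambda-\mu$ fixes the $c_i$). Hence $T^{-1}\Lz$ is an endomorphism of $V$ as a $U_q(\mfg)$-module over the commutative base, and \emph{absolute} irreducibility of $V=V(\lambda)$, giving $\End_{U_q(\mfg)}(V\otimes\mcH)=\mcH$, forces $T^{-1}\Lz$ to be a scalar. Thus $\Lz=cT$ is weight-scalar, so $\Lz_\mu=\mathrm{Id}_{V_\mu}\otimes\mathscr{l}_\mu$ with $\mathscr{l}_\mu=c\,t_\mu$ equal to a diagonal entry of $\Lz_\mu$, and in particular $\mathscr{l}_\mu\in\mcH\subset\URg$.

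The main obstacle I anticipate is the passage from the vanishing commutator to the conjugation identities $\Lz\pi(F_i)\Lz^{-1}=h_i\pi(F_i)$: this is a rank-one uniqueness statement for the tensor $\pi(E_i)\otimes\pi(F_i)$ over the possibly non-reduced commutative ring $\mcH$, which must be handled with care (for instance by pairing against $\Q(q)$-functionals dual to $\pi(E_i)$ and $\pi(F_i)$, or by base-changing to residue fields of $\mcH$), in tandem with the Schur-type identification $\End_{U_q(\mfg)}(V\otimes\mcH)=\mcH$, which is exactly where irreducibility of $V$ enters decisively.
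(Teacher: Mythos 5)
Your proposal is correct, and while your treatment of the grouplike property and of the commutation relation coincides with the paper's reduction, your proof of the crucial scalarity claim takes a genuinely different route. The paper iterates the relation $\mrR\Lz_1\Lz_2=\Lz_2\Lz_1\mrR$ across $V^{\otimes n}\otimes V$ with $n=\mathrm{ht}(\lambda-\mu)$, projects onto components indexed by sequences of simple roots (its identity \eqref{eq:special}), and evaluates against functionals and the highest weight vector to show that every spanning vector $\pi(F_{i_1}\cdots F_{i_n})v_\lambda$ of $V_\mu$ is an eigenvector of $\Lz_\mu$ with one and the same explicitly computed eigenvalue \eqref{l_mu}; irreducibility enters only through the fact that such vectors span $V_\mu$. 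You instead stay with the two-fold relation, extract from $[\Lz_1\Lz_2,\pi(E_i)\otimes\pi(F_i)]=0$ the rescalings $\Lz\pi(E_i)\Lz^{-1}=h_i^{-1}\pi(E_i)$ and $\Lz\pi(F_i)\Lz^{-1}=h_i\pi(F_i)$, and then conclude by Schur's lemma over the commutative coefficient ring $\mcH$: conjugation by $\Lz$ and by your diagonal matrix $T$ agree on $\pi(E_i),\pi(F_i),\pi(K_\mu)$, hence on all of $\pi(U_q(\mfg))$, so $T^{-1}\Lz$ centralizes $\pi(U_q(\mfg))\otimes\mcH=\End(V)\otimes\mcH$ and is therefore a unit of $\mcH$; here the density $\pi(U_q(\mfg))=\End(V)$ is what absolute irreducibility provides, and it holds because $\End_{U_q(\mfg)}(V)=\Q(q)$ by the standard highest-weight argument. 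Both routes need $\pi(E_i)\neq 0\neq\pi(F_i)$, which is where Lemma \ref{L:lattice-V} is used. The step you flag as delicate is in fact unproblematic and requires nothing of $\mcH$ beyond commutativity: with $X=\Lz\pi(E_i)\Lz^{-1}$, $Y=\Lz\pi(F_i)\Lz^{-1}$, choose $g,f\in\End(V)^\ast$ with $g(\pi(E_i))=1=f(\pi(F_i))$ and pair the identity $X\otimes_{\mcH}Y=\pi(E_i)\otimes\pi(F_i)$ against $f$ in the second slot, against $g$ in the first, and against both; this gives $\eta X=\pi(E_i)$, $\xi Y=\pi(F_i)$ and $\xi\eta=1$, so $\xi,\eta\in\mcH^{\times}$ and $h_i:=\eta$ does the job. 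As for what each approach buys: the paper's multilinear computation yields the explicit eigenvalue formula that immediately produces Corollary \ref{C:upxi}, and its $\mrR_{V^{\otimes n},V}$-machinery is recycled later (Lemma \ref{L:LVn} and the $q$-Serre relations); your argument is shorter and more conceptual, needs only the two-fold relation, and in fact also delivers Corollary \ref{C:upxi} for free, since in your normalization $\mathscr{l}_{\mu+\alpha_i}^{-1}\mathscr{l}_\mu=t_{\mu+\alpha_i}^{-1}t_\mu=h_i$ is manifestly independent of $\mu$.
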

\begin{proof}
 
By projecting the third relation of \eqref{Lz:basic} from Lemma \ref{L:Lz-basic} onto $\End(V_\mu)\otimes \End(V)_\beta\otimes \URg$ for any $\beta\in \msQ$ and weight $\mu$ of $V$, we obtain 
\begin{equation*}
\Lz_{\mu,1}(\mrL_\beta^\pm)_2\Lz_{\mu,1}^{-1}=\Rd^{-1}|_{V_\mu\otimes V}(\mrL_\beta^\pm)_2 \Rd|_{V_\mu\otimes V}=q^{-(\mu,\beta)}(\mrL_\beta^\pm)_2,
\end{equation*}
where in the second equality we have used that $\Rd|_{V_\mu\otimes V_\gamma}=	q^{(\mu,\gamma)-(\lambda,\lambda)}\mathrm{Id}_{V_\mu\otimes V_\gamma}$. The above identity implies that, if $\Lz_\mu=\mathrm{Id}_{V_\mu}\otimes \mathscr{l}_\mu$, then one has $\mathscr{l}_\mu x_\beta \mathscr{l}_\mu^{-1}=q^{-(\mu,\beta)}x_\beta$ for all $x_\beta \in \URg_\beta$. Similarly, if the first part of the theorem holds then, since $\Delta_\mrR(\Lz)=\Lz_{[1]}\Lz_{[2]}$, each $\mathscr{l}_\mu$ will automatically be grouplike.

 Hence, we are left to establish the first assertion of the theorem, which requires a more careful treatment. To begin, fix a positive integer $n$ and,  
for each $n$-tuple $(\beta_j)_{j=1}^n \in (\msQ_+)^n$, set 
\begin{equation*}
\mrR_{\beta_1,\ldots,\beta_n}:=(\mrR_{\beta_1})_{1,n+1}\cdots (\mrR_{\beta_n})_{n,n+1} \in \left(\otimes_{j=1}^n \End(V)_{\beta_j}\right)\otimes \End(V)_{-\sum_j \beta_j}.
\end{equation*}
 Given an $n$-tuple of weights $(\mu_j)_{j=1}^n$ of $V$ and an auxiliary weight $\gamma$ of $V$, we set 
\begin{gather*}
\mrR_{\beta_1,\ldots,\beta_n}^{\mu_1,\ldots,\mu_n}(\gamma):=\mrR_{\beta_1,\ldots,\beta_n}|_{V_{\mu_1}\otimes \cdots \otimes V_{\mu_n}\otimes V_\gamma},\\
\Lz_{\mu_1,\ldots,\mu_n}:
= \Lz_{\mu_1,1}\cdots \Lz_{\mu_n,n}.
\end{gather*}

\noindent \textit{Claim}. For each $(\beta_j)_{j=1}^n$, $(\mu_j)_{j=1}^n$ and $\gamma$, as above, we have 
\begin{equation}\label{eq:special}
\begin{aligned}
\Lz_{\gamma-\sum_{j=1}^n \beta_j, n+1} \cdot & \mrR_{\beta_1,\ldots,\beta_n}^{\mu_1,\ldots,\mu_n}(\gamma)\\
&=
\Lz_{\mu_1+\beta_1,\ldots,\mu_n+\beta_n}^{-1} \cdot 
\mrR_{\beta_1,\ldots,\beta_n}^{\mu_1,\ldots,\mu_n}(\gamma)\cdot  \Lz_{\mu_1,\ldots,\mu_n}\cdot  \Lz_{\gamma, n+1}
\end{aligned}
\end{equation}
\begin{proof}[Proof of claim] \let\qed\relax
By applying the relation $\mrR\Lz_1\Lz_2=\Lz_2\Lz_1\mrR$ of Lemma \ref{L:Lz-basic} repeatedly (where $\Lz=\Lz^+$), we deduce that for any $n>0$ one has
\begin{equation*}
\mrR_{V^{\otimes n},V}\,  \Lz_1\Lz_2\cdots \Lz_n\cdot  \Lz_{n+1}=\Lz_{n+1}\cdot \Lz_1\Lz_2\cdots \Lz_n \,\mrR_{V^{\otimes n},V},
\end{equation*}
where $\mrR_{V^{\otimes n},V}:=\mrR_{1,n+1}\mrR_{2,n+1}\cdots \mrR_{n,n+1}$.  Left-multiplying this relation by  the inverse of $\Lz_{V^{\otimes n}}:=\Lz_1\Lz_2\cdots \Lz_n$ while using that the coefficients of $\Lz$ commute, we obtain
\begin{equation*}
\Lz_{V^{\otimes n}}^{-1}\mrR_{V^{\otimes n},V}\,  \Lz_{V^{\otimes n}}\cdot  \Lz_{n+1}=\Lz_{n+1}\cdot  \,\mrR_{V^{\otimes n},V}.
\end{equation*}
The formula \eqref{eq:special} is obtained from this relation by projecting onto the weight $(\beta_1,\ldots,\beta_n,-\sum_j \beta_j)$ component of $\End(V)^{\otimes (n+1)}$ and then restricting the resulting equality of operators to 
$V_{\mu_1}\otimes \cdots \otimes V_{\mu_n}\otimes V_\gamma$. \qedhere
\end{proof}

With the claim at our disposal, we now have the ingredients necessary to prove the first part of the theorem. First note that since $V_\lambda$ is one-dimensional, the desired result holds for $\Lz_\lambda$: it is of the form $\mathrm{Id}_{V_\lambda}\otimes \mathscr{l}_\lambda$ for a unique, degree zero, invertible element $\mathscr{l}_\lambda\in \URg$.

Let us now take $\mu$ to be any weight of $V$, and write $\lambda-\mu=\sum_{j=1}^n \alpha_{i_j}$ for some $n$ and simple roots $\alpha_{i_1},\ldots,\alpha_{i_n}$. Taking $\beta_j=\alpha_{i_j}$ and $\gamma=\lambda$ in formula \eqref{eq:special}, we obtain 
\begin{equation}\label{eq:special2}
\begin{aligned}
\Lz_{\mu, n+1} \cdot & \mrR_{\alpha_{i_1},\ldots,\alpha_{i_n}}^{\mu_1,\ldots,\mu_n}(\lambda)\\
&=
\Lz_{\mu_1+\alpha_{i_1},\ldots,\mu_n+\alpha_{i_n}}^{-1} \cdot 
\mrR_{\alpha_{i_1},\ldots,\alpha_{i_n}}^{\mu_1,\ldots,\mu_n}(\lambda)\cdot  \Lz_{\mu_1,\ldots,\mu_n}\cdot  \mathscr{l}_\lambda.
\end{aligned}
\end{equation}
Moreover, since $\Rp_{\alpha_i}=(q_i-q_i^{-1})\pi(E_i)\otimes \pi(F_i)$ and $\mrR_{\alpha_i}=\Rd \Rp_{\alpha_i}$, the operator $\mrR_{\alpha_{i_1},\ldots,\alpha_{i_n}}^{\mu_1,\ldots,\mu_n}(\lambda)$ is a nonzero scalar multiple of 
\begin{equation*}
\pi(E_{i_1})|_{V_{\mu_1}}\otimes \cdots \otimes \pi(E_{i_n})|_{V_{\mu_n}}\otimes \pi(F_{i_1}\cdots F_{i_n} )|_{V_{\gamma}}.
\end{equation*}
For each $1\leq j\leq n$, let us now choose $\mu_j$ with the property that $\pi(E_{i_j})|_{V_{\mu_j}}$ is nonzero. Such a $j$ exists since $\pi(E_{i_j})$ is not the zero operator on $V$. In addition, we choose $f_j\in \mathrm{Hom}(V_{\mu},V_{\mu+\alpha_{i_j}})^\ast$ such that $f_j(\pi(E_{i_j})|_{V_{\mu_j}})=1$. Applying $f_1\otimes \cdots \otimes f_n$ to the first $n$-tensor factors of \eqref{eq:special2} while evaluating the $(n+1)$-th factor on the highest weight vector $v_\lambda$  yields 
\begin{equation}\label{eq:eigen}
\Lz_{\mu} \cdot  \pi(F_{i_1}\cdots F_{i_n} )v_\lambda 
= \pi(F_{i_1}\cdots F_{i_n} )v_\lambda \otimes \mathscr{l}_{\mu_1,\ldots,\mu_n}^{\alpha_{i_1},\ldots,\alpha_{i_n}}  \mathscr{l}_\lambda
\end{equation}
where $\mathscr{l}_{\mu_1,\ldots,\mu_n}^{\alpha_{i_1},\ldots,\alpha_{i_n}}\in \URg$ is determined by 
\begin{equation}\label{eq:eigen2}
\begin{aligned}
&\mathscr{l}_{\mu_1,\ldots,\mu_n}^{\alpha_{i_1},\ldots,\alpha_{i_n}}\\
&\quad =(f_1\otimes \cdots \otimes f_n\otimes \mathrm{Id}_{\URg})\left(\Lz_{\underline{\mu}+\underline{\alpha}}^{-1} \cdot 
\pi(E_{i_1})|_{V_{\mu_1}}\otimes \cdots \otimes \pi(E_{i_n})|_{V_{\mu_n}}\cdot  \Lz_{\underline{\mu}}\right),
\end{aligned}
\end{equation}
with $\Lz_{\underline{\mu}+\underline{\alpha}}=\Lz_{\mu_1+\alpha_{i_1},\ldots,\mu_n+\alpha_{i_n}}^{-1}$ and $\Lz_{\underline{\mu}}=\Lz_{\mu_1,\ldots,\mu_n}$. 
In particular, \eqref{eq:eigen} implies that each vector $\pi(F_{i_1}\cdots F_{i_n} )v_\lambda$ is an eigenvector for $\Lz_\mu$ with eigenvalue $\mathscr{l}_{\mu_1,\ldots,\mu_n}^{\alpha_{i_1},\ldots,\alpha_{i_n}}  \mathscr{l}_\lambda$. Moreover, the above formula for $\mathscr{l}_{\mu_1,\ldots,\mu_n}^{\alpha_{i_1},\ldots,\alpha_{i_n}}$ implies that 
\begin{equation*}
\mathscr{l}_{\mu_{\sigma(1)},\ldots,\mu_{\sigma(n)}}^{\alpha_{i_{\sigma(1)}},\ldots,\alpha_{i_{\sigma(1)}}}  =\mathscr{l}_{\mu_1,\ldots,\mu_n}^{\alpha_{i_1},\ldots,\alpha_{i_n}}  
\end{equation*}
for every permutation $\sigma\in \mathfrak{S}_n$. 
Since $V_{\mu}$ is spanned by all vectors of the form $\pi(F_{i_1}\cdots F_{i_n} )v_\lambda$ with $\sum_{j=1}^n \alpha_{i_j}=\lambda-\mu$, we see that every vector in $V_\mu$ is an eigenvector for $\Lz_\mu$ with the same eigenvalue, which is  
\begin{equation}\label{l_mu}
\mathscr{l}_\mu:=\mathscr{l}_{\mu_1,\ldots,\mu_n}^{\alpha_{i_1},\ldots,\alpha_{i_n}}  \mathscr{l}_\lambda
\end{equation}
for any sequence of simple roots $\alpha_{i_1},\ldots,\alpha_{i_n}$ which sum to $\lambda-\mu$ and weights $\mu_j$ with $\pi(E_{i_j})|_{V_{\mu_j}}\neq 0$. This completes the proof of the theorem. \qedhere
\end{proof}
For each $i\in \mbI$, let $\wlattice_{\lambda,i}\subset \wlattice_\lambda$ denote the (non-empty) set of all weights $\mu$ of $V$ for which $\mu+\alpha_i$ is also a weight:
\begin{equation*}
\wlattice_{\lambda,i}=\{\mu\in \wlattice_\lambda: \mu+\alpha_i\in \wlattice_\lambda\}.
\end{equation*}
Let us now fix $\mu\in \wlattice_\lambda$ to be any weight of $V$. Returning to  \eqref{eq:eigen2} and \eqref{l_mu} with the above theorem at our disposal, we obtain
\begin{equation*}
\mathscr{l}_\mu:=\mathscr{l}_{\mu_1,\ldots,\mu_n}^{\alpha_{i_1},\ldots,\alpha_{i_n}}  \mathscr{l}_\lambda
=
\mathscr{l}_\lambda\prod_{j=1}^n \mathscr{l}_{\mu_j+\alpha_{i_j}}^{-1}\mathscr{l}_{\mu_j},
\end{equation*}
where $(\alpha_{i_j})_{j=1}^n$ are simple roots such that $\lambda-\mu=\sum_{j=1}^n \alpha_{i_j}$ and each $\mu_j$ is an arbitrary element of $\wlattice_{\lambda,i_j}$. In particular, if $j\in \mbI$ is fixed and $\mu,\gamma$ are any two weights of $V$ in $\wlattice_{\lambda,j}$, then
\begin{equation*}
\mathscr{l}_{\mu+\alpha_j}^{-1}\mathscr{l}_\mu=\mathscr{l}_{\gamma+\alpha_j}^{-1}\mathscr{l}_\gamma.
\end{equation*}
The next corollary summarizes these observations. 
\begin{corollary}\label{C:upxi}
For each $i\in \mbI$, there is a unique element $\upxi_i\in \URg$ with the property that
\begin{equation*}
\upxi_i=\mathscr{l}_{\mu+\alpha_i}^{-1}\mathscr{l}_\mu \quad \forall \quad \mu \in \wlattice_{\lambda,i}.
\end{equation*}
Moreover, if $\mu$ is any weight of $V$ and $\lambda-\mu=\beta$ with $\beta=\sum_{j\in \mbI}n_j \alpha_j$, then 
\begin{equation*}
\mathscr{l}_\mu=\mathscr{l}_\lambda 	\upxi_\beta, \quad \text{ where }\quad \upxi_\beta:=\prod_{j\in \mbI}\upxi_j^{n_j}.
\end{equation*}
\end{corollary}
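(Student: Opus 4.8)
The plan is to deduce the corollary directly from the two identities established in the paragraphs immediately preceding it, so the argument is essentially a matter of repackaging. The first ingredient is the relation $\mathscr{l}_{\mu+\alpha_i}^{-1}\mathscr{l}_\mu=\mathscr{l}_{\gamma+\alpha_i}^{-1}\mathscr{l}_\gamma$ valid for all $\mu,\gamma\in\wlattice_{\lambda,i}$, and the second is the factorization
\[
\mathscr{l}_\mu=\mathscr{l}_\lambda\prod_{j=1}^n \mathscr{l}_{\mu_j+\alpha_{i_j}}^{-1}\mathscr{l}_{\mu_j},
\]
valid for any decomposition $\lambda-\mu=\sum_{j=1}^n\alpha_{i_j}$ into simple roots and any choice of weights $\mu_j\in\wlattice_{\lambda,i_j}$.

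First I would construct $\upxi_i$. Since $\wlattice_{\lambda,i}$ is non-empty and, by the first relation above, the element $\mathscr{l}_{\mu+\alpha_i}^{-1}\mathscr{l}_\mu$ is the same for every $\mu\in\wlattice_{\lambda,i}$, I simply set $\upxi_i$ equal to this common value. Existence of an element with the asserted defining property is then immediate, and uniqueness follows at once, since any such element must coincide with $\mathscr{l}_{\mu+\alpha_i}^{-1}\mathscr{l}_\mu$ upon evaluating at a single $\mu\in\wlattice_{\lambda,i}$. For the second assertion, I fix a weight $\mu$ with $\lambda-\mu=\beta=\sum_{j\in\mbI}n_j\alpha_j$, choose simple roots $\alpha_{i_1},\ldots,\alpha_{i_n}$ summing to $\beta$, and choose $\mu_j\in\wlattice_{\lambda,i_j}$. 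By the definition of $\upxi_{i_j}$, each factor $\mathscr{l}_{\mu_j+\alpha_{i_j}}^{-1}\mathscr{l}_{\mu_j}$ in the factorization equals $\upxi_{i_j}$, so that $\mathscr{l}_\mu=\mathscr{l}_\lambda\prod_{j=1}^n\upxi_{i_j}$. It then remains to rewrite the ordered product $\prod_{j=1}^n\upxi_{i_j}$ as $\prod_{j\in\mbI}\upxi_j^{n_j}$; since the simple roots are linearly independent, the root $\alpha_j$ occurs exactly $n_j$ times among $\alpha_{i_1},\ldots,\alpha_{i_n}$, so this identification is purely combinatorial once the factors are allowed to be permuted freely.

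The only point requiring genuine care—hence the main, though minor, obstacle—is justifying this reordering, which needs the $\upxi_i$ to pairwise commute. This I would obtain from the conjugation relation of Theorem \ref{T:Lz}: each $\mathscr{l}_\nu$ lies in $\URg_0$, so applying that relation with $x_\beta=\mathscr{l}_\gamma$ and $\beta=0$ gives $\mathscr{l}_\mu\mathscr{l}_\gamma\mathscr{l}_\mu^{-1}=\mathscr{l}_\gamma$, i.e. the elements $\mathscr{l}_\nu^{\pm 1}$ commute (equivalently, this is the content of Lemma \ref{L:Lz-basic}, as the $\mathscr{l}_\nu^{\pm 1}$ are matrix coefficients of $\Lz^{\pm 1}$). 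As each $\upxi_i$ is a product of such elements, the $\upxi_i$ commute, the ordered product depends only on the multiplicities $n_j$, and therefore $\prod_{j=1}^n\upxi_{i_j}=\prod_{j\in\mbI}\upxi_j^{n_j}=\upxi_\beta$. This yields $\mathscr{l}_\mu=\mathscr{l}_\lambda\upxi_\beta$ and completes the proof; all other steps are a direct rewriting of identities already in hand.
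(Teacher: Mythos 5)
Your proof is correct and follows exactly the paper's route: the paper offers no separate argument for this corollary, stating only that it ``summarizes'' the two identities derived just before it from Theorem \ref{T:Lz}, which is precisely what you do. Your explicit justification of the reordering step via the commutativity of the coefficients of $\Lz^{\pm 1}$ (Lemma \ref{L:Lz-basic}) is a point the paper leaves implicit, and it is handled correctly.
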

To conclude this subsection, we note that, as an immediate consequence of the second assertion of Theorem \ref{T:Lz} and the above corollary, we have 
\begin{equation}\label{Ad-upxi}
\upxi_i x_\beta\upxi_i^{-1}= q^{(\alpha_i,\beta)}x_\beta \quad \forall \quad  x_\beta\in \URg_\beta \; \text{ and }\; \beta\in\msQ. 
\end{equation}
\subsection{The matrices \texorpdfstring{$\mathbb{L}^\pm$}{bbL\^\pm}}\label{ssec:URg-nil}
Let us now introduce the unipotent matrices $\mbbL^\pm$ by setting
\begin{equation*}
\mbbL^+:=\Lz^{-1}\mrL^+ \quad \text{ and }\quad \mbbL^-:=\mrL^- \Lz,
\end{equation*}
where we have set $\Lz=\Lz^+$, as in the previous section. The goal of this section is to prove Theorem \ref{T:Gen}, which establishes that the components $\mbbL^\pm_\beta$ of $\mbbL^\pm$ lay in $\pi(U_q(\mfn^\pm)_\beta)\otimes \URg$ and have coefficients belonging to the subalgebras generated by the coefficients of $\mbbL^+_{\alpha_i}$ and  $\mbbL^-_{- \alpha_i}$ for all $i\in \mbI$. To prove this theorem we will need two lemmas, the first of which spells out the basic commutation relations satisfied by $\mbbL^\pm$.
\begin{lemma}\label{L:mbbL}
The matrices $\mbbL^\pm$ satisfy the relations 
\begin{align*}
\Rp\! \left(\Rd \mbbL_1^\pm \Rd^{-1}\right) \mbbL_2^\pm&=\mbbL_2^\pm \left(\Rd^{-1} \mbbL_1^\pm \Rd\right)\Rp\\
 \Rp_{\Lz_1}\mbbL_1^+ \mbbL_2^-&=\mbbL_2^- \mbbL_1^+ \Rp_{\Lz_2} 
\end{align*}
where $\Rp_{\Lz_i}=\Lz_i^{-1} \Rp \Lz_i$ for $i\in \{1,2\}$. 
\end{lemma}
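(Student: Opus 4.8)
The plan is to deduce all three displayed relations directly from the defining relations \eqref{RLL+-}--\eqref{RLL} of $\URg$ and the commutation relations for $\Lz=\Lz^+$ recorded in Lemma \ref{L:Lz-basic}. Writing $\mrR=\Rd\Rp$ with $\Rd$ diagonal, the idea is to substitute the definitions $\mrL^+=\Lz\mbbL^+$ and $\mrL^-=\mbbL^-\Lz^{-1}$ into the $RLL$-relations and then transport every diagonal factor $\Lz_1,\Lz_2$ to a single side, where the relation $\mrR\Lz_1\Lz_2=\Lz_2\Lz_1\mrR$ of \eqref{Lz:basic} lets them cancel, leaving a relation among $\Rd$, $\Rp$ and the $\mbbL^\pm$ alone.

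First I would assemble the elementary commutations that make this transport possible. From $\Lz_1\mrL_2^\pm\Lz_1^{-1}=\Rd^{-1}\mrL_2^\pm\Rd$ in \eqref{Lz:basic}, its image under the flip $1\leftrightarrow 2$ (legitimate since $\Rd$ is symmetric under the flip, cf.\ Lemma \ref{L:R^t}), and $[\Lz_1,\Lz_2]=[\Lz_i,\Rd]=0$, a one-line computation yields $\Lz_1\mbbL_2^\pm\Lz_1^{-1}=\Rd^{-1}\mbbL_2^\pm\Rd$ and $\Lz_2\mbbL_1^\pm\Lz_2^{-1}=\Rd^{-1}\mbbL_1^\pm\Rd$; equivalently, $\Rd\Lz_1$ commutes with $\mbbL_2^\pm$ and $\Rd\Lz_2$ commutes with $\mbbL_1^\pm$. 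I would also record that $\Rp$ commutes with $\Lz_1\Lz_2$, obtained by cancelling the invertible factor $\Rd$ in $\mrR\Lz_1\Lz_2=\Lz_1\Lz_2\mrR$.

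With these in hand the computation is mechanical. For the relation of sign $+$, substituting $\mrL_i^+=\Lz_i\mbbL_i^+$ into $\mrR\mrL_1^+\mrL_2^+=\mrL_2^+\mrL_1^+\mrR$ and using the commutations above to move $\Lz_1\Lz_2$ to the extreme left of each side, one cancels $\Lz_1\Lz_2$ and then a single leading $\Rd$ to obtain the stated identity. The sign $-$ case runs the same way after substituting $\mrL_i^-=\mbbL_i^-\Lz_i^{-1}$ and pushing the factors $\Lz_i^{-1}$ to the far right; since the diagonal part now sits on the opposite side of $\mbbL^-$, the surviving conjugations by $\Rd$ attach to the complementary tensor slot, which accounts for the left/right asymmetry between the two computations. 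For the mixed relation I would substitute $\mrL_1^+=\Lz_1\mbbL_1^+$ and $\mrL_2^-=\mbbL_2^-\Lz_2^{-1}$ into \eqref{RLL+-}; here the leftmost $\Lz_1$ is absorbed into $\Rp$ via $\Rp\Lz_1=\Lz_1\Rp_{\Lz_1}$ and, symmetrically, the rightmost $\Lz_2$ produces $\Rp_{\Lz_2}$, while the remaining diagonal factors cancel, so that the two twisted matrices $\Rp_{\Lz_1}$ and $\Rp_{\Lz_2}$ appear exactly where claimed.

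The one genuine point requiring care, and the main obstacle, is the bookkeeping of the conjugations by $\Rd$: each time a factor $\Lz_i^{\pm1}$ is moved past a matrix living in the opposite tensor slot it must be accompanied by the correct conjugation $\Rd^{\mp1}(\cdot)\Rd^{\pm1}$, and one must verify that after all the $\Lz$'s are cancelled these conjugations persist in precisely the asserted positions. As a consistency check, each derived identity can be run in reverse: substituting the definitions of $\mbbL^\pm$ and undoing the manipulations recovers the corresponding relation among \eqref{Lz+Lz-}--\eqref{RLL}, so no information is created or destroyed. A cleaner route to the $-$ relation, which I would use to organize the argument and to explain the factor-swap, is to apply the Chevalley involution $\chev$ of Proposition \ref{P:R-aut}: one checks that $(\mathrm{id}\otimes\chev)(\mbbL^+)=A^{-1}(\mbbL^-)^{t}A$, and then transposing the $+$ relation via $\mrR^{t_1,t_2}=\mathrm{Ad}(A\otimes A)(\mrR_{21})$ (Lemma \ref{L:R^t}) converts it into the relation for $\mbbL^-$.
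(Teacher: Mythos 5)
Your main route---substituting $\mrL^{+}=\Lz\,\mbbL^{+}$ and $\mrL^{-}=\mbbL^{-}\Lz^{-1}$ into \eqref{RLL+-}--\eqref{RLL} and cancelling the diagonal factors via Lemma \ref{L:Lz-basic}---is exactly the paper's: the mixed relation is proved this way verbatim, and the $+$ case of the first relation is the $n=1$ instance of Lemma \ref{L:LVn}, whose proof is precisely the computation you describe. For those two relations your outline is correct and complete.

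The genuine gap is the $-$ case of the first relation. Carrying out your substitution there does \emph{not} produce the printed identity: as you yourself observe, the surviving $\Rd$-conjugations attach to the second tensor slot, i.e.\ one obtains
\begin{equation*}
\Rp\,\mbbL_1^{-}\left(\Rd\,\mbbL_2^{-}\,\Rd^{-1}\right)=\left(\Rd^{-1}\mbbL_2^{-}\,\Rd\right)\mbbL_1^{-}\,\Rp ,
\end{equation*}
whereas the lemma, read literally, conjugates the \emph{first} slot for both signs. Dismissing this as ``accounting for the left/right asymmetry'' is where the argument breaks, because the two identities are not equivalent: the printed $-$ identity is in fact false in $\URg$. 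Indeed, project it onto $\End(V)_{-\alpha_i}\otimes\End(V)_{0}\otimes\URg$; since $\Rp_\nu\in\End(V)_{\nu}\otimes\End(V)_{-\nu}$ and all weights of $\mbbL^{-}$ lie in $\msQ_-$, only the $\nu=0$ terms survive, and the printed identity collapses to $(\mbbL^{-}_{-\alpha_i})_1\,\pi(K_i^{-1})_2=(\mbbL^{-}_{-\alpha_i})_1\,\pi(K_i)_2$, which forces $\mbbL^{-}_{-\alpha_i}=0$; this is absurd, as its image under the homomorphism furnished by Proposition \ref{P:bUqg-L} is $(q_i-q_i^{-1})\,\pi(F_i)\otimes F_i\neq 0$. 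So your computation proves the corrected, slot-swapped variant displayed above---which is the identity the paper actually needs, and (via Lemma \ref{L:LVn}) the only one it ever establishes---but it cannot prove the statement as printed, and a faithful write-up must flag the amendment rather than fold it into the $\pm$ notation. Your fallback route via the Chevalley involution suffers from the same phenomenon in sharper form: the formula $(\id\otimes\chev)(\mbbL^{+})=A^{-1}(\mbbL^{-})^{t}A$ is false. Because the coefficients of $\Lz$ commute with those of $\mrL^{\mp}$ only up to powers of $q$ (Theorem \ref{T:Lz}), transposing $\mrL^{-}\Lz$ generates Cartan corrections, and the correct identity is $(\id\otimes\chev)(\mbbL^{+}_{\beta})=A^{-1}\pi(K_\beta)\,(\mbbL^{-}_{-\beta})^{t}A$; these $\pi(K_\beta)$ factors are exactly what produces the slot swap above, so the involution argument cannot convert the $+$ relation into the printed $-$ one either.
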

\begin{proof}  Both of these identities follow from the defining relations of $\URg$ and Lemma \ref{L:Lz-basic}. For a proof of the first relation, we refer the reader to Lemma \ref{L:LVn} below, which establishes a more general family of identities. To prove the second relation, we apply \eqref{RLL+-} to obtain
\begin{align*}
\Rp_{\Lz_1}\mbbL_1^+ \mbbL_2^-&=\Lz_1^{-1}\Rd^{-1} \mrR \mrL_1^+ \mrL_2^- \Lz_2
=\Lz_1^{-1}\Rd^{-1} \mrL_2^- \mrL_1^+\mrR\Lz_2 = \Lz_1^{-1}\Rd^{-1} \mrL_2^- \mrL_1^+\Rd\Lz_2\Rp_{\Lz_2}. 
\end{align*}
The desired result now follows from Lemma \ref{L:Lz-basic} which gives $\Rd^{-1} \mrL_2^- \Rd=\Lz_1 \mrL_2^- \Lz_1^{-1}$ and $\Rd^{-1} \mrL_1^+ \Rd=\Lz_2 \mrL_1^+ \Lz_2^{-1}$, and hence 
\begin{equation*}
\Lz_1^{-1}\Rd^{-1} \mrL_2^- \mrL_1^+\Rd\Lz_2\Rp_{\Lz_2} =\mrL_2^-\Lz_1^{-1}\Lz_2\mrL_1^+ \Rp_{\Lz_2}
=\mbbL_2^- \mbbL_1^+ \Rp_{\Lz_2}. \qedhere
\end{equation*}
\end{proof}
To state the second lemma, recall from Section \ref{ssec:g} that, for each $\beta \in \dot\msQ_+$, the set $\dot\msQ_+^\beta$ consists of all $\alpha\in \dot\msQ_+$ for which $\beta-\alpha\in \dot\msQ_+$. 
The below result establishes some key commutation relations satisfied by the components of $\mbbL^+$. 
\begin{lemma}\label{L:Rec}
For each $\beta \in \dot\msQ_+$, the matrix $\mathbb{L}_\beta^+ \in \End(V)_\beta\otimes \URg$ satisfies the following two identities:
\begin{gather}
[\Rp_{\alpha_i},(\mbbL_\beta^+)_2]
=(\mbbL_{\beta-\alpha_i}^+)_2 (\mbbL_{\alpha_i}^+)_1\pi(K_i^{-1})_2 - (\mbbL_{\alpha_i}^+)_1 \pi(K_i)_2 (\mbbL_{\beta-\alpha_i}^+)_2,\label{Rec:1}\\
(\mbbL_\beta^+)_1 \pi(K_\beta^{-1}-K_\beta)_2
=
[\Rp_\beta,(\mbbL_\beta^+)_2]+\sum_{\alpha\in \dot\msQ_+^\beta}\mathbb{L}_{\beta,\alpha}^+,
\label{Rec:2}
\end{gather}
where $\mathbb{L}_{\beta,\alpha}^+$ is defined by
\begin{equation*}
\mathbb{L}_{\beta,\alpha}^+=\Rp_\alpha (\mathbb{L}_{\beta-\alpha}^+)_1 \pi(K_{\beta-\alpha})_2 (\mathbb{L}_\alpha^+)_2
-
(\mathbb{L}_\alpha^+)_2(\mathbb{L}_{\beta-\alpha}^+)_1 \pi(K_{\beta-\alpha}^{-1})_2\Rp_\alpha.
\end{equation*}
%
\end{lemma}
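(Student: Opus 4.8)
The plan is to obtain both identities by projecting the first relation of Lemma~\ref{L:mbbL} (in the $+$ case), namely $\Rp(\Rd\mbbL_1^+\Rd^{-1})\mbbL_2^+=\mbbL_2^+(\Rd^{-1}\mbbL_1^+\Rd)\Rp$, onto the homogeneous components of the bigrading on $\End(V)^{\otimes 2}\otimes\URg$ in which each of the two $\End(V)$-tensor factors is graded by $\msQ$. The first step is to eliminate the conjugations by $\Rd$. Since $\Rd=\sum_{\mu,\gamma\in\wlattice_\lambda}q^{(\mu,\gamma)-(\lambda,\lambda)}\mathrm{Id}_{V_\mu\otimes V_\gamma}$ is diagonal and $\pi(K_\alpha)$ acts on $V_\gamma$ by $q^{(\alpha,\gamma)}$, a direct computation on each weight space gives
\begin{equation*}
\Rd(\mbbL_\alpha^+)_1\Rd^{-1}=(\mbbL_\alpha^+)_1\,\pi(K_\alpha)_2 \quad\text{and}\quad \Rd^{-1}(\mbbL_\alpha^+)_1\Rd=(\mbbL_\alpha^+)_1\,\pi(K_\alpha^{-1})_2
\end{equation*}
for every $\alpha\in\msQ_+$. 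Substituting $\mbbL_1^+=\sum_\alpha(\mbbL_\alpha^+)_1$, $\Rp=\sum_\nu\Rp_\nu$ and $\mbbL_2^+=\sum_\delta(\mbbL_\delta^+)_2$, the relation becomes a sum over $\nu,\alpha,\delta\in\msQ_+$ of terms of explicit tridegree, to which I then apply the appropriate idempotent projectors.

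To prove \eqref{Rec:1}, I would project onto the subspace where the first $\End(V)$-factor has degree $\alpha_i$ and the second has degree $\beta-\alpha_i$. The first-factor degree equals $\nu+\alpha$ on each side, and since $\alpha_i$ is simple the only decompositions $\nu+\alpha=\alpha_i$ in $\msQ_+$ are $(\nu,\alpha)\in\{(0,\alpha_i),(\alpha_i,0)\}$. Using $\Rp_0=I$ and $\mbbL_0^+=I$ (the latter because $\mbbL^+=\Lz^{-1}\mrL^+$ and $\mrL_0^+=\Lz$), exactly four terms survive; extracting within them the part of second-factor degree $\beta-\alpha_i$ yields
\begin{equation*}
(\mbbL_{\alpha_i}^+)_1\pi(K_i)_2(\mbbL_{\beta-\alpha_i}^+)_2+\Rp_{\alpha_i}(\mbbL_\beta^+)_2=(\mbbL_{\beta-\alpha_i}^+)_2(\mbbL_{\alpha_i}^+)_1\pi(K_i^{-1})_2+(\mbbL_\beta^+)_2\Rp_{\alpha_i},
\end{equation*}
and moving the two $\Rp_{\alpha_i}$-terms to assemble the commutator $[\Rp_{\alpha_i},(\mbbL_\beta^+)_2]$ gives precisely \eqref{Rec:1}.

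To prove \eqref{Rec:2}, I would instead project onto first-factor degree $\beta$ and second-factor degree $0$. The first-factor condition forces $\alpha=\beta-\nu$, while the second-factor condition (tracking degree $-\nu$ for $\Rp_\nu$, degree $\delta$ for $(\mbbL_\delta^+)_2$, and degree $0$ for $\pi(K_\alpha)_2$) forces $\delta=\nu$. Separating off the extreme indices $\nu=0$ and $\nu=\beta$, for which $\Rp_0=\mbbL_0^+=I$, produces the boundary terms $(\mbbL_\beta^+)_1\pi(K_\beta)_2$, $(\mbbL_\beta^+)_1\pi(K_\beta^{-1})_2$, $\Rp_\beta(\mbbL_\beta^+)_2$ and $(\mbbL_\beta^+)_2\Rp_\beta$, whose combination is $(\mbbL_\beta^+)_1\pi(K_\beta-K_\beta^{-1})_2+[\Rp_\beta,(\mbbL_\beta^+)_2]$. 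The remaining indices $\nu=\alpha\in\dot\msQ_+^\beta$ contribute, as (left term)$-$(right term), exactly the summands $\mathbb{L}_{\beta,\alpha}^+$. Collecting everything and multiplying through by $-1$ yields \eqref{Rec:2}.

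The computations are essentially bookkeeping once the bigraded projection is set up; the structural input is entirely contained in Lemma~\ref{L:mbbL} and the diagonal form of $\Rd$. The one point requiring care is the ordering of the two noncommuting factor-$2$ operators $\pi(K_\alpha)_2$ and $(\mbbL_\delta^+)_2$, together with the convention $\mbbL_\gamma^+=0$ for $\gamma\notin\msQ_+$, which guarantees that \eqref{Rec:1} holds verbatim even when $\beta-\alpha_i\notin\msQ_+$ (the corresponding term simply vanishes). I expect this ordering-and-degree bookkeeping to be the only real obstacle.
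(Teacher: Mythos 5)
Your proposal is correct and takes essentially the same route as the paper's proof: both project the first relation of Lemma \ref{L:mbbL} onto the $\End(V)_{\alpha_i}\otimes\End(V)_{\beta-\alpha_i}$ component (for \eqref{Rec:1}) and the $\End(V)_\beta\otimes\End(V)_0$ component (for \eqref{Rec:2}), and then remove the $\Rd$-conjugations using the identity $\Rd(\mbbL_\nu^+)_1\Rd^{-1}=(\mbbL_\nu^+)_1\pi(K_\nu)_2$. The only difference is presentational: you enumerate explicitly which triples of degrees survive each projection, a bookkeeping step the paper leaves implicit.
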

\begin{proof}
Taking the $\End(V)_{\alpha_i}\otimes \End(V)_{\beta-\alpha_i}$-component of the first relation of Lemma \ref{L:mbbL}, we obtain
\begin{equation*}
\Rp_{\alpha_i}(\mbbL_{\beta}^+)_2+\Rd(\mathbb{L}_{\alpha_i}^+)_1\Rd^{-1} (\mbbL_{\beta-\alpha_i}^+)_2
=
(\mbbL_{\beta-\alpha_i}^+)_2 \Rd^{-1}(\mbbL_{\alpha_i}^+)_1 \Rd+(\mbbL_{\beta}^+)_2 \Rp_{\alpha_i}.
\end{equation*}
To complete the proof of \eqref{Rec:1}, it remains to note that for any $\nu\in \msQ_+$, one has 
\begin{equation}\label{eq:D-K}
\Rd(\mathbb{L}_{\nu}^+)_1\Rd^{-1}=(\mathbb{L}_{\nu}^+)_1\pi(K_\nu)_2.
\end{equation}

Let's now turn to the relation \eqref{Rec:2}. Taking instead the $\End(V)_\beta\otimes \End(V)_0$-component of the first relation of Lemma \ref{L:mbbL} yields
\begin{align*}
\Rd (\mathbb{L}_{\beta}^+)_1\Rd^{-1} + &\Rp_\beta(\mbbL_\beta^+)_2 +\sum_{\alpha\in \dot\msQ_+^\beta}\Rp_\alpha\Rd(\mathbb{L}_{\beta-\alpha}^+)_1\Rd^{-1} (\mbbL_\alpha^+)_2\\
&=
\Rd^{-1}(\mbbL_{\beta}^+)_1 \Rd
+
(\mbbL_\beta^+)_2  \Rp_\beta
+
\sum_{\alpha\in \dot\msQ_+^\beta} (\mbbL_\alpha^+)_2 \Rd^{-1}(\mbbL_{\beta-\alpha}^+)_1 \Rd \Rp_\alpha.
\end{align*}
The relation \eqref{Rec:2} follows readily from this identity by rearranging and making use of \eqref{eq:D-K}. \qedhere
\end{proof}

We are now prepared to state and prove the main result of this subsection. Recall that $\chev$ is the Chevalley involution of $\URg$ defined in Proposition \ref{P:R-aut}.
\begin{theorem}\label{T:Gen}For each  $i\in \mbI$ there is an element $\Xr_i\in \URg_{\alpha_i}$ such that 
\begin{equation*}
\mathbb{L}_{\alpha_i}^+=\pi(E_i)\otimes \Xr_i \quad \text{ and }\quad \mathbb{L}_{-\alpha_i}^-=\pi(F_i)\otimes \Yr_i, 
\end{equation*}
where $\Yr_i=\chev(\Xr_i)\in \URg_{-\alpha_i}$. 
Moreover,  one has 
\begin{equation*}
\mathbb{L}_\beta^\pm\in \pi(U_q(\mfn^\pm)_\beta)\otimes \mbU_\mrR^\lambda(\mfn^\pm)_\beta \quad \forall \quad \beta\in \msQ_\pm,
\end{equation*}
where $\mbU_\mrR^\lambda(\mfn^+)$ and $\mbU_\mrR^\lambda(\mfn^-)$  denote the subalgebras of $\URg$ generated by $\{\Xr_i\}_{i\in \mbI}$ and $\{\Yr_i\}_{i\in \mbI}$, respectively.
\end{theorem}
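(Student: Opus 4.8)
The plan is to descend through the Gauss components of $\mbbL^\pm$: first settle the simple-root case directly, then induct on the height of $\beta$ using the recursions of Lemma \ref{L:Rec}, and finally transport the positive statements to the negative half via the Chevalley involution $\chev$ of Proposition \ref{P:R-aut}.

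First I would dispose of the base case $\beta=\alpha_i$ using \eqref{Rec:2}. Since $\dot\msQ_+^{\alpha_i}=\emptyset$, that identity collapses to $(\mbbL_{\alpha_i}^+)_1\,\pi(K_i^{-1}-K_i)_2=[\Rp_{\alpha_i},(\mbbL_{\alpha_i}^+)_2]$, and because $\Rp_{\alpha_i}=(q_i-q_i^{-1})\pi(E_i)\otimes\pi(F_i)$, the right-hand side visibly has first tensor slot a scalar multiple of $\pi(E_i)$. Applying to the first slot any functional $\phi\in\End(V)^\ast$ with $\phi(\pi(E_i))=0$ annihilates the right-hand side, while the left-hand side becomes $\pi(K_i^{-1}-K_i)\otimes(\phi\otimes\mathrm{Id})(\mbbL_{\alpha_i}^+)$. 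As $\pi(K_i^{-1}-K_i)\neq 0$ (here $\lambda\neq 0$ forces $\pi(E_i)\neq 0$, hence a nontrivial $\mfsl_2$-string, by the argument in Lemma \ref{L:lattice-V}), this gives $(\phi\otimes\mathrm{Id})(\mbbL_{\alpha_i}^+)=0$. Letting $\phi$ range over the annihilator of $\pi(E_i)$ shows $\mbbL_{\alpha_i}^+=\pi(E_i)\otimes\Xr_i$ for a unique $\Xr_i$, which lies in $\URg_{\alpha_i}$ by Proposition \ref{P:grading}.

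Next I would run the induction on the height of $\beta\in\dot\msQ_+$ via \eqref{Rec:1}. The key linear-algebra input is that for $\beta\in\dot\msQ_+$ the map $\End(V)_\beta\to\bigoplus_{i\in\mbI}\End(V)_{\beta-\alpha_i}$, $X\mapsto([\pi(F_i),X])_{i}$, is injective: if $X$ commutes with every $\pi(F_i)$ then it commutes with $\pi(U_q(\mfn^-))$, while $Xv_\lambda\in V_{\lambda+\beta}=0$ since $\lambda+\beta$ is not a weight of $V$; as $V=\pi(U_q(\mfn^-))v_\lambda$ this gives $X=0$. Feeding the inductive hypotheses $\mbbL_{\alpha_i}^+=\pi(E_i)\otimes\Xr_i$ and $\mbbL_{\beta-\alpha_i}^+\in\pi(U_q(\mfn^+)_{\beta-\alpha_i})\otimes\mbU_\mrR^\lambda(\mfn^+)_{\beta-\alpha_i}$ into \eqref{Rec:1} and cancelling the common first factor $\pi(E_i)$, I obtain an explicit formula for $[\pi(F_i),\mbbL_\beta^+]$ whose coefficients are products of the $\Xr_j$ and hence lie in $\mbU_\mrR^\lambda(\mfn^+)_\beta$, and whose first factors lie in $\pi\big(U_q(\mfn^+)_{\beta-\alpha_i}\,U_q(\mfh)\big)$. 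Applying a fixed $\Q(q)$-linear left inverse of the injective map above (which acts only on the $\End(V)$ slot) recovers $\mbbL_\beta^+$ with coefficients still in $\mbU_\mrR^\lambda(\mfn^+)_\beta$, proving the containment of the coefficients. The statement for $\mbbL_\beta^-$, together with $\Yr_i=\chev(\Xr_i)$, then follows by applying $\chev$ to the whole discussion, using $\chev(\mrL^\pm)=A^{-1}(\mrL^\mp)^tA$, the identity $\chev(\mathscr{l}_\mu)=\mathscr{l}_\mu^{-1}$ extracted from Theorem \ref{T:Lz}, and $A^{-1}\pi(x)^tA=\pi(\uptau(x))$ from Lemma \ref{L:A-def} to convert the diagonal $\pi(K_i)$ coming from $\uptau(E_i)=F_iK_i$ into the weight-zero factors $\mathscr{l}_\mu$.

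The delicate point, which I expect to be the main obstacle, is upgrading the conclusion from "first factors in $\End(V)_\beta$" to "first factors in $\pi(U_q(\mfn^+)_\beta)$" during the reconstruction. The commutators $[\pi(F_i),-]$ land in $\pi(U_q(\mfn^+)_{\beta-\alpha_i})$ only after multiplication by the Cartan factors $\pi(K_i^{\pm1})$, so a naive left inverse need not keep us inside $\pi(U_q(\mfn^+)_\beta)$. Making this precise requires exploiting that $\pi$ intertwines the adjoint action of $U_q(\mfg)$ on itself with the module action on $\End(V)$, so that $[\pi(F_i),\pi(U_q(\mfn^+)_\beta)]=\pi([F_i,U_q(\mfn^+)_\beta])$; identifying the right-hand side of \eqref{Rec:1} with the image of $\pi(U_q(\mfn^+)_\beta)$ under the simultaneous lowering maps, and only then invoking the injectivity established above, is the crux of the argument.
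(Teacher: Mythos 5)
Your base case and your coefficient argument are both sound. For $\beta=\alpha_i$, relation \eqref{Rec:2} does collapse (as $\dot\msQ_+^{\alpha_i}=\emptyset$) to an identity whose right-hand side has first slot spanned by $\pi(E_i)$, and contracting the first slot against the annihilator of $\pi(E_i)$, together with $\pi(K_i^{-1}-K_i)\neq 0$, forces $\mbbL_{\alpha_i}^+=\pi(E_i)\otimes \Xr_i$; this is a legitimate direct alternative to the paper, which instead obtains the simple-root case as a special instance of the general first-factor statement. Likewise, recovering $\mbbL_\beta^+$ from the tuple $\bigl([\pi(F_i)\otimes 1,\mbbL_\beta^+]\bigr)_{i\in\mbI}$ by a fixed linear left inverse of the injective lowering map does prove that the \emph{coefficients} of $\mbbL_\beta^+$ lie in $\mbU_\mrR^\lambda(\mfn^+)_\beta$, since a linear map applied to the $\End(V)$ slot keeps the $\URg$-coefficients inside the span one starts with. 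The transport of the positive statements to $\mbbL^-$ via $\chev$ is the same as in the paper.

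The genuine gap is exactly the point you flag in your final paragraph, and your proposed fix does not close it. A left inverse of $X\mapsto([\pi(F_i),X])_i$ is a generic linear map: it controls coefficients but gives no information on the first tensor factor beyond $\End(V)_\beta$. To run your argument for the containment $\mbbL_\beta^+\in\pi(U_q(\mfn^+)_\beta)\otimes\URg_\beta$ you would need the tuple $\bigl([\pi(F_i)\otimes 1,\mbbL_\beta^+]\bigr)_i$ to lie in the image of $\pi(U_q(\mfn^+)_\beta)\otimes\URg$ under the simultaneous lowering maps. But what \eqref{Rec:1} exhibits are first factors of the Cartan-contaminated form $z\,\pi(K_i^{-1})$ and $\pi(K_i)\,z$ with $z\in\pi(U_q(\mfn^+)_{\beta-\alpha_i})$, and there is no a priori reason such a tuple equals $\bigl([\pi(F_i),Y]\bigr)_i$ for some $Y\in\pi(U_q(\mfn^+)_\beta)$: producing such a $Y$ is precisely what is to be proved, so the sketch is circular (equivalently, it would require injectivity of the map induced by the lowering maps on the quotients $\End(V)_\beta/\pi(U_q(\mfn^+)_\beta)$, which you neither formulate nor prove, and which does not follow from $[\pi(F_i),\pi(U_q(\mfn^+)_\beta)]=\pi([F_i,U_q(\mfn^+)_\beta])$). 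The gap then propagates: without the first-factor statement, the $\chev$-transport cannot invoke $\uptau(U_q(\mfn^+)_\beta)K_\beta^{-1}=U_q(\mfn^-)_{-\beta}$, so the assertion for $\mbbL^-_{-\beta}$ is also unproved. The paper's proof avoids this obstacle by inducting with \eqref{Rec:2} rather than \eqref{Rec:1}: in \eqref{Rec:2} every Cartan factor ($\pi(K_\beta^{\pm 1})_2$ and $\pi(K_{\beta-\alpha}^{\pm 1})_2$) sits in the \emph{second} $\End(V)$ slot, so one contracts that slot against $\varphi(x)=f(xv)$ with $v\in V_\mu$, $f(v)=1$, where $\mu$ is chosen so that $(\beta,\mu)\neq 0$ (possible since $\beta$ cannot annihilate $\wlattice(\lambda)\supset\msQ$). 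The left side becomes the invertible scalar $q^{-(\beta,\mu)}-q^{(\beta,\mu)}$ times $\mbbL_\beta^+$, while the first slot of the right side is manifestly a sum of products of first factors of the $\Rp_\nu$ and of lower-height components of $\mbbL^+$, hence lies in $\pi(U_q(\mfn^+)_\beta)$ by induction; relation \eqref{Rec:1} enters only afterwards, in a second induction, to force the coefficients into $\mbU_\mrR^\lambda(\mfn^+)_\beta$.
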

\begin{proof} 
We will first show by induction on the height $\mathrm{ht}(\beta)$ of $\beta\in \msQ_+$ that one has 
\begin{equation}\label{Gen-step1}
\mbbL_\beta^+\in \pi(U_q(\mfn^+)_\beta)\otimes \URg_\beta \quad \forall\; \beta\in \msQ_+.
\end{equation}
 If $\beta=0$, then this is trivial as $\mathbb{L}_0=I$. Suppose now that $\beta\neq 0$ has height $k>0$. Then, we can find a weight $\mu$ of $V$ such that $(\beta,\mu)\neq 0$; indeed, otherwise $\beta$ annihilates the lattice $\wlattice(\lambda)\supset \msQ$, which is impossible. Let $v\in V_\mu$ be nonzero and let $f\in V^\ast$ be such that $f(v)=1$. Then, applying the linear functional $\varphi \in\End(V)^\ast$ defined by $\varphi(x)=f(xv)$ to the second tensor factor of \eqref{Rec:2} yields
\begin{equation}\label{Rec:3}
\mathbb{L}_\beta^+(q^{-(\beta,\mu)}-q^{(\beta,\mu)})= \varphi_2\left( [\Rp_\beta,(\mathbb{L}_\beta^+)_2]\right)+\sum_{\alpha\in \dot\msQ_+^\beta} \varphi_2(\mathbb{L}_{\beta,\alpha}^+),
\end{equation}
where $\varphi_2=\mathrm{Id}_V\otimes \varphi\otimes \mathrm{Id}_{\URg}$. 
Since $\alpha\in \dot\msQ_+^\beta$ implies that $\mathrm{ht}(\beta-\alpha)<\mathrm{ht}(\beta)$ and $\Rp_\nu\in \pi(U_q(\mfn^+))_\nu\otimes \pi(U_q(\mfn^-))_{-\nu}$ for each $\nu \in \msQ_+$, we can conclude from the above formula, the definition of $\mbbL_{\beta,\alpha}^+$ (see Lemma \ref{L:Rec}), and the inductive hypothesis that $\mathbb{L}_\beta^+$ belongs to $\pi(U_q(\mfn^+)_\beta)\otimes \URg_\beta$. This completes the proof of \eqref{Gen-step1}.

Since $\pi(U_q(\mfn^+)_{\alpha_i})=\Q(q)\cdot \pi(E_i)$ for each $i\in \mbI$, it follows immediately from \eqref{Gen-step1} that $\mathbb{L}_{\alpha_i}^+$ is of the form $\mathbb{L}_{\alpha_i}^+=\pi(E_i)\otimes \Xr_i$ for some $\Xr_i\in \URg_{\alpha_i}$. 

Next, we show that for arbitrary $\beta\in \dot\msQ_+$, one in fact has 
\begin{equation}\label{Gen-step2}
\mathbb{L}_\beta^+\in \pi(U_q(\mfn^+)_\beta)\otimes \mbU_\mrR^\lambda(\mfn^+)_\beta,
\end{equation}
where $\mbU_\mrR^\lambda(\mfn^+)$ is as in the statement of the theorem. For this, we can again induct on the height of $\beta$, with the base case being trivial. Assume now the assertion holds for any $\alpha\in \msQ_+$ of height less than $k$, and let $\beta \in\msQ_+$ be such that $\mathrm{ht}(\beta)=k$. Using \eqref{Rec:3} and the inductive hypothesis, we see that the coefficients of $\mathbb{L}_\beta^+$ will belong to $\mathrm{U}_\mrR^\lambda(\mfn^+)_\beta$ provided 
\begin{equation*}
[\Rp_\beta,(\mathbb{L}_\beta^+)_2]\in \End(V)^{\otimes 2}\otimes \mathbf{U}_\mrR^\lambda(\mfn^+)_\beta.
\end{equation*}
Since $\Rp_\beta\in \End(V)\otimes \pi(U_q(\mfn^-))_\beta$, this will be true provided 
\begin{equation*}
[\pi(F_{i_1}\cdots F_{i_{k}}),\mathbb{L}_\beta^+]\in \End(V)\otimes \mathbf{U}_\mrR^\lambda(\mfn^+)_\beta 
\end{equation*}
for all $(i_j)_{j=1}^k\in \mbI^k$ such that $\beta=\sum_{j=1}^k \alpha_{i_j}$. Since $\Rp_{\alpha_i}=(q_i-q_i^{-1})\pi(E_i)\otimes \pi(F_i)$, this is now a consequence of \eqref{Rec:1} and the inductive hypothesis.

To complete the proof of the theorem, we must show that $\mathbb{L}_{-\alpha_i}^-=\pi(F_i)\otimes \chev(\Xr_i)$ and that the counterpart of \eqref{Gen-step2} holds with $\mbbL^+$ replaced by $\mbbL^-$, $\msQ_+$ by $\msQ_-$, and $\mfn^+$ by $\mfn^-$. This can be accomplished using properties of the Chevalley involution $\chev$, as we now explain. First, observe that
\begin{equation*}
\chev(\mbbL^-)=\chev(\mrL^- \Lz)=A^{-1}(\mrL^+)^t A \Lz^{-1},
\end{equation*}
where we have used that $\chev(\Lz)=A^{-1}(\Lz^{-1})^t A=\Lz^{-1}$, which follows from the fact that, by Theorem \ref{T:Lz}, $\Lz$ is diagonal and commutes with $A$. Next, using properties of the transpose $t$ and Theorem \ref{T:Lz}, we deduce that
\begin{equation*}
(\mrL^+)^t_{-\beta} = (\Lz \mbbL^+_\beta)^t=(\mbbL_\beta)^t \pi(K_\beta^{-1})\Lz \quad \forall\quad \beta\in \msQ_+.
\end{equation*}
Combining the above formulae, we obtain 
\begin{equation}\label{chev:L}
\chev(\mbbL^-_{-\beta})=A^{-1}(\mbbL_\beta^+)^t  A  \cdot \pi(K_\beta^{-1}) \quad \forall \quad \beta\in \msQ_+.
\end{equation}
In the special case where $\beta=\alpha_i$, we may use that $\mbbL_{\alpha_i}^+=\pi(E_i)\otimes \Xr_i$ and that $A^{-1}\pi(x)^tA=\pi(\uptau(x))$ (see Lemma \ref{L:A-def}) to obtain 
\begin{equation*}
\chev(\mbbL^-_{-\alpha_i})=\pi(\uptau(E_i) K_i^{-1})\otimes \Xr_i=\pi(F_i)\otimes \Xr_i.
\end{equation*}
Since $\chev$ is involutive, this yields $\mbbL^-_{-\alpha_i}=\pi(F_i)\otimes \Yr_i$ with $\Yr_i=\chev(\Xr_i)$, as desired. 

More generally, if $\beta\in \msQ_+$ is arbitrary then from \eqref{Gen-step2} and \eqref{chev:L}  we obtain
\begin{equation*}
\chev(\mbbL^-_{-\beta})\in \pi(\uptau(U_q(\mfn^+)_\beta)K_\beta^{-1})\otimes \mbU_\mrR^\lambda(\mfn^+)_\beta =\pi(U_q(\mfn^-)_{-\beta})\otimes \mbU_\mrR^\lambda(\mfn^+)_\beta,
\end{equation*}
where we have used that $\uptau(U_q(\mfn^+)_\beta)K_\beta^{-1}=U_q(\mfn^-)_{-\beta}$; see \eqref{uptau}. Since $\chev$ is an involution and $\chev(\mbU_\mrR^\lambda(\mfn^+)_\beta)=\mbU_\mrR^\lambda(\mfn^-)_{-\beta}$, we may conclude that 
\begin{equation*}
\mbbL^-_{-\beta}\in \pi(U_q(\mfn^-)_{-\beta})\otimes \mbU_\mrR^\lambda(\mfn^-)_{-\beta} \quad \forall\quad \beta\in \msQ_+. \qedhere
\end{equation*}
\end{proof}
\begin{remark}\label{R:Gen}
As an application of Corollary \ref{C:upxi} and Theorems \ref{T:Lz} and \ref{T:Gen}, we may conclude that $\URg$ is generated as a $\Q(q)$-algebra by $\mathscr{l}_\lambda$, $\mathscr{l}_\lambda^{-1}$, and the family of elements $\{\upxi_i^{\pm 1}, \Xr_i,\Yr_i\}_{i\in \mbI}$. We will prove in Section \ref{sec:URg=Uq} that these elements satisfy the defining relations of $\bUqg$. 
\end{remark}
%
%

\section{Identifying  \texorpdfstring{$\mathbf{U}_\mathrm{R}^\lambda(\mfg)$}{U\_R\^{}lambda(g)} and \texorpdfstring{$\mathbf{U}_q^\lambda(\mfg)$}{U\_q\^{}lambda(g)}}\label{sec:URg=Uq}

In this section, we state and prove the main theorem of this article, which identifies the $\msQ$-graded Hopf algebras $\URg$ and $\bUqg$; see Theorem \ref{T:main}. Afterwards, we use this theorem to recover natural characterizations of the quantum algebras $U_q(\mfg)$, $U_q^\lambda(\mfg)$ and $U_q(\mfg)\otimes \Q(q)[v_\lambda^{\pm 1}]$ within the framework of the $R$-matrix formalism; see Corollaries \ref{C:Recover} and \ref{C:nlambda}.
\subsection{The isomorphism \texorpdfstring{$\URg\cong \bUqg$}{U\_Rg->U\_qg}}\label{ssec:URg=Uq}
Recall from Section \ref{ssec:bUqg-gen} that $\scL^+$ and $\scL^-$ are the generating matrices for $\bUqg$ defined by 
\begin{equation*}
\scL^+=v\scK^{-1} \cdot (\pi\otimes \omega)(\mathscr{R}^+) \quad \text{ and }\quad 
\scL^-=(\pi\otimes \omega)((\mathscr{R}^+_{21})^{-1})\cdot \scK v^{-1}.
\end{equation*}
 In addition, we recall from Proposition \ref{P:gen-rels} that $\upxi_\lambda^\pm$ denotes the element $K_\lambda^{\pm 1}v^{\mp 1}$ of $\bUqg$. The following theorem provides the main result of this section.
\begin{theorem}\label{T:main}
The assignment $\mrL^\pm\mapsto\scL^\pm$ uniquely extends to an isomorphism of $\msQ$-graded Hopf algebras 
\begin{equation*}
\Upupsilon: \URg\iso \bUqg.
\end{equation*}
The inverse $\Upupsilon^{-1}$ of $\Upupsilon$ is given by the following formulae for $i\in \mbI:$
\begin{gather*}
\Upupsilon^{-1}(K_i^{\pm 1})=\upxi_i^{\pm 1}, \quad \Upupsilon^{-1}(\upxi_\lambda^{\pm})=\mathscr{l}_\lambda^{\mp 1},\\
\Upupsilon^{-1}(E_i)=\frac{\Xr_i}{q_i^{-1}-q_i} \qquad \text{ and }\qquad \Upupsilon^{-1}(F_i)=\frac{\Yr_i}{q_i-q_i^{-1}}.
\end{gather*}
\end{theorem}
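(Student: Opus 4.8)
The plan is to exhibit $\Upupsilon$ and a candidate inverse as algebra maps built from the two presentations, and then verify they are mutually inverse on generators. The forward direction is essentially Proposition \ref{P:bUqg-L}: since $\scL^+,\scL^-\in \End(V)\otimes \bUqg$ satisfy the relations \eqref{Lz+Lz-}--\eqref{RLL} defining $\URg$ and their coefficients generate $\bUqg$, the assignment $\mrL^\pm\mapsto \scL^\pm$ extends to a surjective $\Q(q)$-algebra homomorphism $\Upupsilon:\URg\to \bUqg$. The Hopf relations recorded in Proposition \ref{P:bUqg-L} show that $\Upupsilon$ intertwines the coproducts, antipodes and counits \eqref{URg-Hopf}, and the fact that $\scL^\pm_\beta$ lies in $\End(V)_\beta\otimes \bUqg_\beta$ makes $\Upupsilon$ a morphism of $\msQ$-graded Hopf algebras.

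To construct the inverse I would invoke the presentation $\bUqg\cong \dot\mbU_q^\lambda(\mfg)$ of Proposition \ref{P:gen-rels}: it suffices to assign to the generators $\upxi_i^\pm,\upxi_\lambda^\pm,x_i^\pm$ elements of $\URg$ obeying \eqref{dot:1}--\eqref{dot:4}. The natural choices, supplied by Corollary \ref{C:upxi} and Theorem \ref{T:Gen}, are $\upxi_i^\pm\mapsto \upxi_i^{\pm1}$, $\upxi_\lambda^\pm\mapsto \mathscr{l}_\lambda^{\mp1}$, $x_i^+\mapsto \Xr_i/(q_i^{-1}-q_i)$ and $x_i^-\mapsto \Yr_i/(q_i-q_i^{-1})$. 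Relations \eqref{dot:1} are immediate from Theorem \ref{T:Lz} and Corollary \ref{C:upxi}, which exhibit the $\upxi_i$ and $\mathscr{l}_\lambda$ as pairwise commuting invertible grouplike elements. Relations \eqref{dot:2} follow from \eqref{Ad-upxi} together with the degrees $\Xr_i\in \URg_{\alpha_i}$, $\Yr_i\in\URg_{-\alpha_i}$, and, for the $\upxi_\lambda$ part, from the conjugation formula $\mathscr{l}_\lambda x_\beta\mathscr{l}_\lambda^{-1}=q^{-(\lambda,\beta)}x_\beta$ of Theorem \ref{T:Lz}.

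The two genuinely computational relations rest on Lemma \ref{L:mbbL}. For the commutator relation \eqref{dot:3}, I would extract the $\End(V)_{\alpha_i}\otimes \End(V)_{-\alpha_j}$ component of the mixed identity $\Rp_{\Lz_1}\mbbL_1^+\mbbL_2^-=\mbbL_2^-\mbbL_1^+\Rp_{\Lz_2}$, substitute $\mbbL_{\alpha_i}^+=\pi(E_i)\otimes \Xr_i$, $\mbbL_{-\alpha_j}^-=\pi(F_j)\otimes \Yr_j$ and $\Rp_{\alpha_k}=(q_k-q_k^{-1})\pi(E_k)\otimes \pi(F_k)$, and pair the $\End(V)^{\otimes 2}$ slot against a suitable weight functional; the $\Lz$-conjugates produced this way collapse via Corollary \ref{C:upxi} to the $\upxi_i^{\pm1}$, leaving exactly \eqref{dot:3} after the chosen normalization of $\Xr_i,\Yr_i$. \textbf{The main obstacle is the $q$-Serre relation \eqref{dot:4}.} The quadratic identities of Lemma \ref{L:mbbL} only constrain $\mbbL^\pm$ on $V\otimes V$ and do not see the Serre relations directly; the plan is to upgrade the defining matrix equations of $\URg$ to their counterparts over the higher tensor powers $V^{\otimes n}$ and to read off \eqref{dot:4} from the homogeneous component in degree $(1-a_{ij})\alpha_i+\alpha_j$, where the relevant $R$-matrix operator already encodes the quantum Serre element. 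This is carried out in Corollary \ref{C:Serre}, whose proof is the technical heart of the argument.

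Granting \eqref{dot:1}--\eqref{dot:4}, Proposition \ref{P:gen-rels} produces an algebra homomorphism $\Upupsilon^{-1}:\bUqg\to \URg$ given by the displayed formulae, and it remains to check it inverts $\Upupsilon$ on generators. Using $\mathscr{R}_0^+=1$, $\mathscr{R}_{\alpha_i}^+=(q_i-q_i^{-1})E_i\otimes F_i$ and $\omega(F_i)=-E_i$, a direct reading of the weight-zero and simple-root components of $\Upupsilon(\mrL^\pm)=\scL^\pm$ gives $\Upupsilon(\mathscr{l}_\mu)=K_\mu^{-1}v$, whence $\Upupsilon(\mathscr{l}_\lambda)=\upxi_\lambda^-$ and $\Upupsilon(\upxi_i)=K_i$; cancelling the diagonal factor in $\mbbL^+=\Lz^{-1}\mrL^+$ and $\mbbL^-=\mrL^-\Lz$ then yields $\Upupsilon(\Xr_i)=(q_i^{-1}-q_i)E_i$ and $\Upupsilon(\Yr_i)=(q_i-q_i^{-1})F_i$. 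These identities show $\Upupsilon\circ\Upupsilon^{-1}=\id$ on the generators $E_i,F_i,K_i^{\pm1},\upxi_\lambda^\pm$ of $\bUqg$; and since $\URg$ is generated by $\mathscr{l}_\lambda^{\pm1}$ and $\{\upxi_i^{\pm1},\Xr_i,\Yr_i\}_{i\in\mbI}$ by Remark \ref{R:Gen}, they likewise give $\Upupsilon^{-1}\circ\Upupsilon=\id$. Therefore $\Upupsilon$ is an isomorphism of $\msQ$-graded Hopf algebras with the stated inverse.
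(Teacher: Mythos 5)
Your proposal is correct and follows essentially the same route as the paper's own proof: the forward map is obtained from Proposition \ref{P:bUqg-L} and the grading argument of Proposition \ref{P:grading}, the candidate inverse is built on the presentation of Proposition \ref{P:gen-rels} with relations \eqref{dot:1}--\eqref{dot:3} verified via Theorem \ref{T:Lz}, Corollary \ref{C:upxi}, \eqref{Ad-upxi}, Theorem \ref{T:Gen} and Lemma \ref{L:mbbL}, the $q$-Serre relations deferred to the tensor-power argument culminating in Corollary \ref{C:Serre}, and the two maps checked to be mutually inverse on generators using Remark \ref{R:Gen}. The only difference is cosmetic: the paper explicitly uses the Chevalley involution $\chev$ (via $\Yr_i=\chev(\Xr_i)$ from Theorem \ref{T:Gen}) to deduce the $F$-side Serre relations \eqref{main-suff:6} from the $E$-side ones \eqref{main-suff:5}, a one-line reduction your sketch leaves implicit.
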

\begin{proof}
It follows directly from Proposition \ref{P:bUqg-L} that the assignment $\mrL^\pm\mapsto\scL^\pm$ extends to a Hopf algebra homomorphism $\Upupsilon: \URg\to \bUqg$. Moreover, since $\scL^\pm$ are degree zero elements of the $\msQ$-graded algebra $\End(V)^t\otimes \bUqg$ (see the proof of Proposition \ref{P:grading}), $\Upupsilon$ respects the underlying $\msQ$-gradings. 

Hence, we are left to verify that $\Upupsilon$ is invertible with inverse as specified in the statement of the theorem. We will do this by explicitly showing the stated formulas for $\Upupsilon^{-1}$ determine a $\Q(q)$-algebra homomorphism $\bUqg\to \URg$, and then explain why it is necessarily $\Upupsilon^{-1}$.

\noindent\textit{Claim}. The assignment
\begin{gather*}
K_i^{\pm 1}\mapsto \upxi_i^{\pm 1}, \quad \upxi_\lambda^{\pm}\mapsto\mathscr{l}_\lambda^{\mp 1},\quad E_i\mapsto \frac{\Xr_i}{q_i^{-1}-q_i} \quad \text{ and }\quad F_i\mapsto \frac{\Yr_i}{q_i-q_i^{-1}}
\quad \forall \; i\in \mbI
\end{gather*}
extends to an algebra homomorphism $\Upupsilon^\sharp:\bUqg\to \URg$. 

\begin{proof}[Proof of claim]
By Proposition \ref{P:gen-rels}, this amounts to showing that the elements $\mathscr{l}_\lambda^{\mp 1}$, $\upxi_i^{\pm 1}$, $(q_i^{-1}-q_i)^{-1}\Xr_i$ and $(q_i-q_i^{-1})^{-1}F_i$ satisfy the relations \eqref{dot:1}--\eqref{dot:4} for $\upxi_\lambda^\pm$, $\upxi_i^\pm$, $x_i^+$ and $x_i^-$, respectively.

The first identity of \eqref{dot:1} is trivially satisfied, while the second holds since $\{\upxi_i^{\pm 1}\}_{i\in \mbI}$ and $\mathscr{l}_\lambda^{\pm 1}$ belong to the commutative subalgebra of $\URg$ generated by the coefficients of $\Lz$ and $\Lz^{-1}$; see Lemma \ref{L:Lz-basic} and Corollary \ref{C:upxi}.  
The commutation relations of \eqref{dot:2} are a consequence of Theorem \ref{T:Lz}, the identity \eqref{Ad-upxi}, and that, by Theorem \ref{T:Gen}, the elements $\Xr_j$ and $\Yr_j$ have degree $\alpha_j$ and $-\alpha_j$, respectively. 

The relations \eqref{dot:3} and \eqref{dot:4} for $\{\Xr_i,\Yr_i\}_{i\in \mbI}$ are more subtle. They are equivalent to the relations
\begin{gather}
[\Xr_i,\Yr_j]=\delta_{ij}(q_i^{-1}-q_i)\left(\upxi_i-\upxi_i^{-1}\right),
\label{main-suff:4}\\
\sum_{b=0}^{1-a_{ij}}(-1)^b \sbinom{1-a_{ij}}{b}_{q_i} \Xr_i^b \Xr_j \Xr_i^{1-a_{ij}-b}=0,
\label{main-suff:5}\\
\sum_{b=0}^{1-a_{ij}}(-1)^b \sbinom{1-a_{ij}}{b}_{q_i} \Yr_i^b \Yr_j \Yr_i^{1-a_{ij}-b}=0,
\label{main-suff:6}
\end{gather}
where $q_i=q^{d_i}$ and $i\neq j$ in the last two relations. 
To prove \eqref{main-suff:4} recall that, by Lemma \ref{L:mbbL}, we have 
$
\Rp_{\Lz_1}\mbbL_1^+ \mbbL_2^-=\mbbL_2^- \mbbL_1^+ \Rp_{\Lz_2} 
$
where $\Rp_{\Lz_a}=\Lz_a^{-1} \Rp \Lz_a$ for $a\in \{1,2\}$. Taking the $\End(V)_{\alpha_i}\otimes \End(V)_{-\alpha_j}\otimes \URg$ component of this relation and rearranging, we obtain 
\begin{equation}\label{[L+,L-]}
\left[(\mbbL^+_{\alpha_i})_1,(\mbbL^-_{-\alpha_j})_2\right]
=
\delta_{ij}\left((\Rp_{\Lz_2})_{\alpha_i}- (\Rp_{\Lz_1})_{\alpha_i}\right).
\end{equation}
To see that this is equivalent to \eqref{main-suff:4}, note that, for each $\beta\in \msQ_+$ and $x\in \End(V)_{\pm\beta}$, one has 
\begin{equation*}
\Lz^{-1} x \Lz = \sum_\mu (\mathrm{Id}_{V_{\mu\pm\beta}}\circ  x \circ \mathrm{Id}_{V_{\mu}}) \otimes \mathscr{l}_{\mu\pm\beta}^{-1} \mathscr{l}_\mu =\sum_\mu (\mathrm{Id}_{V_{\mu\pm\beta}}\circ  x \circ \mathrm{Id}_{V_{\mu}}) \otimes \upxi_\beta^{\pm 1} =x\otimes \upxi_\beta^{\pm 1}
\end{equation*}
where $\upxi_\beta$ is as in Corollary \ref{C:upxi} (in particular, $\upxi_{\alpha_i}=\upxi_i$). 
Since $\Rp_{\alpha_i}=(q_i-q_i^{-1})\pi(E_i)\otimes \pi(F_i)$, this implies that \eqref{[L+,L-]} is equivalent to 
\begin{equation*}
\pi(E_i)\otimes \pi(F_i)\otimes [\Xr_i,\Yr_j]=\pi(E_i)\otimes \pi(F_i)\otimes \delta_{ij}(q_i^{-1}-q_i)\left(\upxi_i-\upxi_i^{-1} \right),
\end{equation*}
where we have used that, by Theorem \ref{T:Gen}, we have $\mbbL^+_{\alpha_i}=\pi(E_i)\otimes \Xr_i$ and $\mbbL^-_{-\alpha_j}=\pi(F_j)\otimes \Yr_j$. The relation \eqref{main-suff:4} now follows immediately. 

We are left to verify that the $q$-Serre relations \eqref{main-suff:5} and \eqref{main-suff:6} are satisfied. By Theorem \ref{T:Gen}, $\Yr_k=\chev(\Xr_k)$ for all $k\in \mbI$, so it is sufficient to establish that \eqref{main-suff:5} holds for all $i,j\in \mbI$ with $i\neq j$. This is proven
 in detail in Section \ref{ssec:Serre} below. \let\qed\relax
\end{proof}
Now let us explain why $\Upupsilon^\sharp$ is the inverse of $\Upupsilon$. 
By Remark \ref{R:Gen}, the elements $\mathscr{l}_\lambda^{\pm 1}$, $\upxi_i^{\pm 1}$, $\Xr_i$ and $\Yr_i$ ($i\in \mbI$) generate $\URg$. Hence, it is sufficient to show that $\Upsilon\circ \Upupsilon^\sharp=\mathrm{Id}_{\bUqg}$. This is a consequence of the following claim. 

\noindent\textit{Claim}. For each $i\in \mbI$, one has 
\begin{equation}\label{Ups-KM}
\begin{gathered}
\Upupsilon(\mathscr{l}_\lambda)=\upxi_\lambda^-=K_\lambda^{-1}v,\\
\Upupsilon(\upxi_i)=K_i,
\quad
\Upupsilon(\Xr_i)=(q_i^{-1}-q_i)E_i,\quad \Upupsilon(\Yr_i)=(q_i-q_i^{-1})F_i,
\end{gathered}
\end{equation}
\begin{proof}[Proof of claim]
Since $\Upupsilon(\Lz)=\scL^+_0=v\scK^{-1}$ and $\Lz=\sum_\mu \mathrm{Id}_{V_\mu}\otimes \mathscr{l}_\mu$, we have $\Upupsilon(\mathscr{l}_\mu)=vK_\mu^{-1}$ for each weight $\mu$ of $V$. This immediately gives $\Upupsilon(\mathscr{l}_\lambda)=\upxi_\lambda^-$, while also implying 
\begin{equation*}
\Upupsilon(\upxi_i)=\Upupsilon(\mathscr{l}_{\mu+\alpha_i}^{-1}\mathscr{l}_\mu)
=
K_{\mu+\alpha_i}v^{-1} v K_\mu^{-1}=K_i \quad \forall\; i\in \mbI.
\end{equation*}
Next, by the relations at the beginning of the proof of Proposition \ref{P:bUqg-L}, we have 
\begin{gather*}
\Upupsilon(\mbbL^+_{\alpha_i})=\Upupsilon(\mathscr{L}^{-1})\scL_{\alpha_i}^+=(q_i^{-1}-q_i)\,\pi(E_i)\otimes E_i,\\
\Upupsilon(\mbbL^-_{\alpha_i})=\scL_{\alpha_i}^- \Upupsilon(\mathscr{L})=(q_i-q_i^{-1})\,\pi(F_i)\otimes F_i,
\end{gather*}
from which it follows immediately that $\Upupsilon(\Xr_i)=(q_i^{-1}-q_i)E_i$ and $\Upupsilon(\Yr_i)=(q_i-q_i^{-1})F_i$ for each $i\in \mbI$. This completes the proof of the claim.\qedhere
%
%
\end{proof}
\let\qed\relax
\end{proof}
%
%
\subsection{The $q$-Serre relations}\label{ssec:Serre}

In this section, we will prove that the elements $\{\Xr_i\}_{i\in \mbI}$ satisfy the $q$-Serre relations \eqref{main-suff:5}; see Corollary \ref{C:Serre}. This  is essentially a consequence of the first relation of Lemma \ref{L:Rec}. However, to make this precise we will need to enlarge our underlying representation $V$ by introducing the $U_q(\mfg)$-representation 
\begin{equation*}
\mathbb{V}:=\bigoplus_{n\in \N}V^{\otimes n}.
\end{equation*}
Let $\pi_\mbbV:U_q(\mfg)\to \End(\mbbV)$ denote the associated $\Q(q)$-algebra homomorphism. An important property, which we will exploit in \eqref{veps+} below, is that $\mbbV$ is a faithful $U_q(\mfg)$-module (equivalently, $\pi_\mbbV$ is injective). This property can be deduced from its well-known classical analogue (for the enveloping algebra $U(\mfg)$) via a standard specialization argument. Two different proofs of the assertion for $U(\mfg)$ may be found in \cite{Pass-14}*{Thm.~4.10} and \cite{GRWvrep}*{Thm.~A.1}.

To simplify notation, we shall henceforth write $\mrL$ for $\mrL^+$ and $\mbbL$ for $\mbbL^+$. In addition, we set 
\begin{gather*}
\mrL_{V^{\otimes n}}:=\mrL_1\mrL_2\cdots \mrL_n \in \End(V)^{\otimes n}\otimes \URg,\\
\mrR_{V^{\otimes n},V}:=\mrR_{1,n+1}\mrR_{2,n+1}\cdots \mrR_{n,n+1}\in \End(V^{\otimes n})\otimes \End(V),\\
\mrR_{V,V^{\otimes n}}:=\mrR_{1,n+1}\mrR_{1,n}\cdots \mrR_{1,2}\in \End(V)\otimes \End(V^{\otimes n}).
\end{gather*}
Note that $\mrR_{V^{\otimes n},V}$ has already appeared in the proof of Theorem \ref{T:Lz}. 
The above definitions imply that
\begin{equation}\label{RLL:n}
\begin{aligned}
\mrR_{V^{\otimes n},V} \mrL_{V^{\otimes n}}\mrL_V= \mrL_V\mrL_{V^{\otimes n}}\mrR_{V^{\otimes n},V},\\
\mrR_{V,V^{\otimes n}}\mrL_V \mrL_{V^{\otimes n}}=\mrL_V \mrL_{V^{\otimes n}}\mrR_{V,V^{\otimes n}},
\end{aligned}
\end{equation}
in $\End(V^{\otimes n})\otimes \End(V)\otimes \URg$ and $\End(V)\otimes \End(V^{\otimes n})\otimes\URg$, respectively. These two spaces are of course isomorphic, but for the sake of notation it is convenient to view them as distinct; in particular $\mrL_V=\mrL_{n+1}$ in the first equality and $\mrL_V=\mrL_1$ in the second. 

Let $\Lz_{V^{\otimes n}}$, $\Rd_{V^{\otimes n},V}$ and $\Rd_{V,V^{\otimes n}}$ be the weight zero blocks of $\mrL_{V^{\otimes n}}$, $\mrR_{V^{\otimes n},V}$ and $\mrR_{V,V^{\otimes n}}$, respectively, and set 
\begin{gather*}
\mbbL_{V^{\otimes n}}:=\Lz_{V^{\otimes n}}^{-1} \mrL_{V^{\otimes n}},\\
\Rp_{V^{\otimes n},V}:=\Rd_{V^{\otimes n},V}^{-1}\mrR_{V^{\otimes n},V} \quad \text{ and }\quad \Rp_{V, V^{\otimes n}}:=\Rd_{V, V^{\otimes n}}^{-1}\mrR_{V, V^{\otimes n}}.
\end{gather*}
\begin{lemma}\label{L:LVn}
For each $n>0$, the matrix $\mbbL_{V^{\otimes n}}$ satisfies the relations 
\begin{align*}
\Rp_{V, V^{\otimes n}} \left(\Rd_{V, V^{\otimes n}} \mbbL_V\Rd_{V, V^{\otimes n}}^{-1}  \right) \mbbL_{V^{\otimes n}}
&=
\mbbL_{V^{\otimes n}}\left(\Rd_{V, V^{\otimes n}}^{-1} \mbbL_V\Rd_{V, V^{\otimes n}}\right)\Rp_{V, V^{\otimes n}}, \\
\Rp_{V^{\otimes n},V} \left(\Rd_{V^{\otimes n},V} \mbbL_{V^{\otimes n}}\Rd_{V^{\otimes n},V}^{-1}  \right) \mbbL_V
&=
\mbbL_V\left(\Rd_{V^{\otimes n},V}^{-1} \mbbL_{V^{\otimes n}}\Rd_{V^{\otimes n},V} \right)\Rp_{V^{\otimes n},V}.
\end{align*}
\end{lemma}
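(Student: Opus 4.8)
The plan is to treat $V^{\otimes n}$ as a single tensor factor and to observe that both identities are nothing but the fused analogues of the first relation of Lemma \ref{L:mbbL}, which is the case $n=1$. Concretely, each is obtained by inserting the factorizations $\mrL_{V^{\otimes n}}=\Lz_{V^{\otimes n}}\mbbL_{V^{\otimes n}}$, $\mrL_V=\Lz_V\mbbL_V$ and $\mrR_{V^{\otimes n},V}=\Rd_{V^{\otimes n},V}\Rp_{V^{\otimes n},V}$ into the appropriate relation of \eqref{RLL:n} and cancelling the diagonal parts. I would prove the second identity in detail, starting from the first relation of \eqref{RLL:n}; the first identity then follows by the identical computation with the roles of $V$ and $V^{\otimes n}$ interchanged, starting from the second relation of \eqref{RLL:n} instead.

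First I would record the fused analogues of Lemma \ref{L:Lz-basic}. Placing $V^{\otimes n}$ in the slots $1,\dots,n$ and $V$ in slot $n+1$, one has $\Lz_{V^{\otimes n}}=\Lz_1\cdots\Lz_n$ and $\Rd_{V^{\otimes n},V}=\Rd_{1,n+1}\cdots\Rd_{n,n+1}$, all of whose factors are commuting diagonal blocks. By Theorem \ref{T:Lz}, $\Lz_{V^{\otimes n}}$ acts on $V_{\mu_1}\otimes\cdots\otimes V_{\mu_n}$ as $\mathscr{l}_{\mu_1}\cdots\mathscr{l}_{\mu_n}$, so the coefficients of $\Lz_{V^{\otimes n}}$ and $\Lz_V$ pairwise commute and commute with every $\Rd$. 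Applying the three relations of Lemma \ref{L:Lz-basic} to each pair of slots $(j,n+1)$ and multiplying then yields
\begin{gather*}
\mrR_{V^{\otimes n},V}\,\Lz_{V^{\otimes n}}\Lz_V=\Lz_V\Lz_{V^{\otimes n}}\,\mrR_{V^{\otimes n},V},\\
\Lz_V^{-1}\mbbL_{V^{\otimes n}}\Lz_V=\Rd_{V^{\otimes n},V}\mbbL_{V^{\otimes n}}\Rd_{V^{\otimes n},V}^{-1},\quad \Lz_{V^{\otimes n}}^{-1}\mbbL_V\Lz_{V^{\otimes n}}=\Rd_{V^{\otimes n},V}\mbbL_V\Rd_{V^{\otimes n},V}^{-1},
\end{gather*}
the last two using that $\Lz$ and $\Rd$ are diagonal and commute. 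These are exactly the inputs used in the $n=1$ argument, now with slot $1$ replaced by the composite factor $V^{\otimes n}$.

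With these in hand the computation is purely formal. Writing $\Rd=\Rd_{V^{\otimes n},V}$ and $\Rp=\Rp_{V^{\otimes n},V}$ for brevity, I substitute the factorizations into $\mrR_{V^{\otimes n},V}\mrL_{V^{\otimes n}}\mrL_V=\mrL_V\mrL_{V^{\otimes n}}\mrR_{V^{\otimes n},V}$ and move each diagonal factor to the far left. On the left-hand side, $\mbbL_{V^{\otimes n}}\Lz_V=\Lz_V(\Rd\mbbL_{V^{\otimes n}}\Rd^{-1})$ followed by $\mrR_{V^{\otimes n},V}\Lz_{V^{\otimes n}}\Lz_V=\Lz_V\Lz_{V^{\otimes n}}\mrR_{V^{\otimes n},V}$ produces $\Lz_V\Lz_{V^{\otimes n}}\Rd\,\Rp(\Rd\mbbL_{V^{\otimes n}}\Rd^{-1})\mbbL_V$; on the right-hand side, $\mbbL_V\Lz_{V^{\otimes n}}=\Lz_{V^{\otimes n}}(\Rd\mbbL_V\Rd^{-1})$ produces $\Lz_V\Lz_{V^{\otimes n}}\Rd\,\mbbL_V(\Rd^{-1}\mbbL_{V^{\otimes n}}\Rd)\Rp$. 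Cancelling the common invertible factor $\Lz_V\Lz_{V^{\otimes n}}\Rd$ on the left yields precisely the asserted identity.

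The only point demanding care — and hence the main obstacle — is justifying that the fused objects really have the claimed product form: that $\Lz_{V^{\otimes n}}$ is the weight-zero block $\Lz_1\cdots\Lz_n$ and that $\Rd_{V^{\otimes n},V}=\Rd_{1,n+1}\cdots\Rd_{n,n+1}$, so that the single-slot conjugation rule $\Lz_a\mrL_b^\pm\Lz_a^{-1}=\Rd^{-1}\mrL_b^\pm\Rd$ telescopes correctly through the products. This holds because $\mrL=\sum_{\beta\in\msQ_+}\mrL_\beta$ has only nonnegative weight components, so the weight-zero block of a product $\mrL_1\cdots\mrL_n$ (respectively of $\mrR_{1,n+1}\cdots\mrR_{n,n+1}$) is forced to be the product of the individual weight-zero blocks, which are diagonal and mutually commuting by Theorem \ref{T:Lz}; the auxiliary commutations of each $\Rd_{j,n+1}$ and $\mrR_{j,n+1}$ with the $\Lz_k$ and $\mbbL_k$ in disjoint slots are immediate since these $R$-matrix blocks carry no $\URg$-component.
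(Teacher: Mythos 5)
Your proof is correct and follows essentially the same route as the paper's own argument: substitute the factorizations $\mrL_{V^{\otimes n}}=\Lz_{V^{\otimes n}}\mbbL_{V^{\otimes n}}$, $\mrL_V=\Lz_V\mbbL_V$, $\mrR_{V^{\otimes n},V}=\Rd_{V^{\otimes n},V}\Rp_{V^{\otimes n},V}$ into \eqref{RLL:n}, use the fused conjugation identities coming from Lemma \ref{L:Lz-basic} to push the invertible diagonal factor $\Lz_V\Lz_{V^{\otimes n}}\Rd_{V^{\otimes n},V}$ to the far left of both sides, and cancel, with the companion relation obtained by exchanging the roles of $V$ and $V^{\otimes n}$. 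The only difference is cosmetic and in your favor: you state the fused analogues of Lemma \ref{L:Lz-basic} and justify the product decompositions $\Lz_{V^{\otimes n}}=\Lz_1\cdots\Lz_n$ and $\Rd_{V^{\otimes n},V}=\Rd_{1,n+1}\cdots\Rd_{n,n+1}$ (via positivity of the weight components), points the paper uses implicitly.
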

\begin{proof}
These are both generalizations of the first identity of Lemma \ref{L:mbbL}.  The first relation in \eqref{RLL:n} is equivalent to 
\begin{equation*}
\Rd_{V^{\otimes n},V}\Rp_{V^{\otimes n},V} \Lz_{V^{\otimes n}}\mbbL_{V^{\otimes n}}\Lz_V\mbbL_V= \Lz_V\mbbL_V \Lz_{V^{\otimes n}}\mbbL_{V^{\otimes n}}\Rd_{V^{\otimes n},V}\Rp_{V^{\otimes n},V}.
\end{equation*}
The right-hand side is 
\begin{align*}
\Lz_V\Lz_{V^{\otimes n}} &(\Lz_{V^{\otimes n}}^{-1}\mbbL_V \Lz_{V^{\otimes n}}) \mbbL_{V^{\otimes n}}\Rd_{V^{\otimes n},V}\Rp_{V^{\otimes n},V}\\
&=
\Lz_V\Lz_{V^{\otimes n}} (\Rd_{V^{\otimes n},V}\mbbL_V \Rd_{V^{\otimes n},V}^{-1}) \mbbL_{V^{\otimes n}}\Rd_{V^{\otimes n},V}\Rp_{V^{\otimes n},V}\\
&
=\Lz_V\Lz_{V^{\otimes n}} \Rd_{V^{\otimes n},V} \cdot \mbbL_V\left(\Rd_{V^{\otimes n},V}^{-1} \mbbL_{V^{\otimes n}}\Rd_{V^{\otimes n},V} \right)\Rp_{V^{\otimes n},V},
\end{align*}
where we have used that $\Rd_{V^{\otimes n},V}\mbbL_V \Rd_{V^{\otimes n},V}^{-1}=\Lz_{V^{\otimes n}}^{-1}\mbbL_V \Lz_{V^{\otimes n}}$. Similarly, the left-hand side is 
\begin{align*}
\Rd_{V^{\otimes n},V}&\Rp_{V^{\otimes n},V} \Lz_{V^{\otimes n}}\Lz_V\left(\Lz_V^{-1}\mbbL_{V^{\otimes n}}\Lz_V\right)\mbbL_V\\
&
=\Rd_{V^{\otimes n},V}\Rp_{V^{\otimes n},V} \Lz_{V^{\otimes n}}\Lz_V\left(\Rd_{V^{\otimes n},V}\mbbL_{V^{\otimes n}}\Rd_{V^{\otimes n},V}^{-1}\right)\mbbL_V
\\
&
=\Rd_{V^{\otimes n},V}\Lz_V \Lz_{V^{\otimes n}}\cdot \Rp_{V^{\otimes n},V}\left(\Rd_{V^{\otimes n},V}\mbbL_{V^{\otimes n}}\Rd_{V^{\otimes n},V}^{-1}\right)\mbbL_V.
\end{align*}
Comparing these two computations yields the first relation of the lemma. The second relation is proven similarly. \qedhere
\end{proof}

 Let us now define $\mathscr{R}^{\beta}_{\mathbb{V}}\in \End(V\otimes \mathbb{V})$, $_{\mathbb{V}}\mathscr{R}_{\beta}\in  \End(\mathbb{V}\otimes V)$ and $\mathbb{L}_\beta^\mbbV\in \Hom(\mbbV, \mbbV\otimes \URg$, for any $\beta\in \msQ_+$, by 
\begin{gather*}
 \mathbb{L}_\beta^\mbbV:=\sum_{n\in \N} (\mathbb{L}_{V^{\otimes n}})_\beta \circ \mathbf{1}_{V^{\otimes n}},\\
 \mathscr{R}^{\beta}_{\mathbb{V}}:=\sum_{n\in \N} (\Rp_{V,V^{\otimes n}})_\beta \circ \mathbf{1}_{V,V^{\otimes n}}
 \quad \text{ and }\quad 
  _{\mathbb{V}}\mathscr{R}_{\beta}:=\sum_{n\in \N} (\Rp_{V^{\otimes n},V})_\beta \circ \mathbf{1}_{V^{\otimes n},V},
\end{gather*}
where $\mathbf{1}_{V^{\otimes n}}:\mbbV\to V^{\otimes n}$ is the natural projection, $\mathbf{1}_{V,V^{\otimes n}}=\mathrm{Id}_V\otimes \mathbf{1}_{V^{\otimes n}}$ and $\mathbf{1}_{V^{\otimes n},V}=\mathbf{1}_{V^{\otimes n}}\otimes \mathrm{Id}_V$. The elements $\mathscr{R}^{\beta}_{\mathbb{V}}$ and $_{\mathbb{V}}\mathscr{R}_{\beta}$ are defined precisely so that the statement of the following lemma holds. 
\begin{lemma}\label{L:R-mbbV}
For each $\beta\in \msQ_+$, the elements $\mathscr{R}^{\beta}_{\mathbb{V}}$ and $  _{\mathbb{V}}\mathscr{R}_{\beta}$ satisfy
\begin{equation*}
\mathscr{R}^{\beta}_{\mathbb{V}}=(\pi\otimes \pi_{\mbbV})(\mathscr{R}^+_{\beta}) \quad \text{ and }\quad 
  _{\mathbb{V}}\mathscr{R}_{\beta}=(\pi_\mbbV\otimes \pi)(\mathscr{R}^+_{\beta}),
\end{equation*}
where $\mathscr{R}_\beta^+\in U_q(\mfn^+)_\beta\otimes U_q(\mfn^-)_{-\beta}$ is as in Section \ref{ssec:Uqg-R}.
\end{lemma}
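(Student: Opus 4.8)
The plan is to reduce the statement, for each fixed $n$, to the two operator identities
\[
\Rp_{V^{\otimes n},V}=(\pi_{V^{\otimes n}}\otimes \pi)(\mathscr{R}^+)
\qquad\text{and}\qquad
\Rp_{V,V^{\otimes n}}=(\pi\otimes \pi_{V^{\otimes n}})(\mathscr{R}^+),
\]
where $\pi_{V^{\otimes n}}$ denotes the $U_q(\mfg)$-representation carried by $V^{\otimes n}$, so that $\pi_\mbbV=\bigoplus_{n}\pi_{V^{\otimes n}}$. Granting these, one projects each side onto its degree $\beta$ component and sums over $n\in\N$ against the idempotents $\mathbf{1}_{V^{\otimes n},V}$ and $\mathbf{1}_{V,V^{\otimes n}}$; since $\mathbf{1}_{V^{\otimes n},V}=\mathbf{1}_{V^{\otimes n}}\otimes\mathrm{Id}_V$ projects onto the $n$-th summand of $\mbbV$, the resulting block sums are exactly $(\pi_\mbbV\otimes\pi)(\mathscr{R}^+_\beta)$ and $(\pi\otimes\pi_\mbbV)(\mathscr{R}^+_\beta)$, which is the assertion of the lemma.

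I would establish the first displayed identity by induction on $n$, the second being entirely symmetric (using the second relation of \eqref{R^+:Hopf} in place of the first). The base case $n=1$ is immediate from the definitions, since $\mrR_{V,V}=\mrR=\Rd\,\pi^{\otimes 2}(\mathscr{R}^+)$ forces $\Rp_{V,V}=\Rd^{-1}\mrR=\pi^{\otimes2}(\mathscr{R}^+)$. For the inductive step I split $V^{\otimes n}=V^{\otimes(n-1)}\otimes V$ and apply $\Delta$ to the first tensor leg of $\mathscr{R}^+$. The first identity of \eqref{R^+:Hopf} reads $(\Delta\otimes\mathrm{Id})(\mathscr{R}^+)=(\mathrm{Id}\otimes\Uppsi)(\mathscr{R}^+_{13})\,\mathscr{R}^+_{23}$; evaluating in $\pi_{V^{\otimes(n-1)}}\otimes\pi\otimes\pi$ and using \eqref{D-Psi} to turn the $\Uppsi$-twist on the last two legs into conjugation by $\Rd_{n,n+1}$, I obtain
\[
(\pi_{V^{\otimes n}}\otimes\pi)(\mathscr{R}^+)=\Big(\Rd_{n,n+1}^{-1}\,(\pi_{V^{\otimes(n-1)}}\otimes\pi)(\mathscr{R}^+)\,\Rd_{n,n+1}\Big)\cdot\big[(\pi_V\otimes\pi)(\mathscr{R}^+)\ \text{on legs}\ n,n+1\big].
\]
On the $\mrR$-side I factor $\mrR_{V^{\otimes n},V}=\big(\mrR_{1,n+1}\cdots\mrR_{n-1,n+1}\big)\mrR_{n,n+1}$, insert the inductive hypothesis $\mrR_{V^{\otimes(n-1)},V}=\Rd_{V^{\otimes(n-1)},V}\,(\pi_{V^{\otimes(n-1)}}\otimes\pi)(\mathscr{R}^+)$ together with $\mrR_{n,n+1}=\Rd_{n,n+1}\,(\pi_V\otimes\pi)(\mathscr{R}^+)$, and commute the diagonal $\Rd_{n,n+1}$ to the left (all $\Rd$'s being mutually commuting, weight-preserving operators). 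This shows $\mrR_{V^{\otimes n},V}=\mathcal{D}_n\,(\pi_{V^{\otimes n}}\otimes\pi)(\mathscr{R}^+)$ with $\mathcal{D}_n=\Rd_{V^{\otimes(n-1)},V}\Rd_{n,n+1}$ diagonal.

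To conclude, I would use that $\mathscr{R}^+_0=1$, so that the evaluation $(\pi_{V^{\otimes n}}\otimes\pi)(\mathscr{R}^+)$ has identity weight-zero component while $\mathcal{D}_n$ is weight-preserving; comparing weight-zero blocks then forces $\mathcal{D}_n$ to equal the weight-zero block $\Rd_{V^{\otimes n},V}$ of $\mrR_{V^{\otimes n},V}$, whence $\Rp_{V^{\otimes n},V}=\Rd_{V^{\otimes n},V}^{-1}\mrR_{V^{\otimes n},V}=(\pi_{V^{\otimes n}}\otimes\pi)(\mathscr{R}^+)$. I expect the main obstacle to be the careful bookkeeping in the inductive step: one must verify that the automorphism $\Uppsi$ appearing in \eqref{R^+:Hopf} becomes, under $\pi$, precisely conjugation by $\Rd_{n,n+1}$ through \eqref{D-Psi}, and that this conjugation matches exactly the reshuffling of diagonal factors generated on the $\mrR$-side; once the $n=2$ instance is checked by hand the general pattern is transparent. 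The degenerate term $n=0$ (the trivial module $V^{\otimes 0}$) is dispatched separately, both sides reducing to the identity in degree $0$ and to $0$ in positive degree.
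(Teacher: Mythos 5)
Your proposal is correct and takes essentially the same route as the paper: the paper's entire proof is the single sentence that the lemma "follows from repeated application of the coproduct identities \eqref{R^+:Hopf} and the relation \eqref{D-Psi}," and your induction on $n$ — splitting off one tensor leg, converting the $\Uppsi$-twist into conjugation by $\Rd_{n,n+1}$ via \eqref{D-Psi}, and matching the resulting diagonal factors — is exactly that repeated application with the bookkeeping made explicit. Your final step identifying $\mathcal{D}_n$ with $\Rd_{V^{\otimes n},V}$ by comparing weight-zero blocks is sound, since $\mathscr{R}^+_0=1$ forces the weight-zero component of $(\pi_{V^{\otimes n}}\otimes\pi)(\mathscr{R}^+)$ to be the identity.
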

\begin{proof} This follows from repeated application of the coproduct identities \eqref{R^+:Hopf} and the relation \eqref{D-Psi}. \qedhere
\end{proof}
Next, we establish a variant of Lemma \ref{L:Rec} and Theorem \ref{T:Gen} which holds for the matrices $\mbbL_\beta^\mbbV$. Recall that $\mbU_\mrR^\lambda(\mfn^+)$ denotes the subalgebra of $\URg$ generated by $\{\Xr_i\}_{i\in \mbI}$. 
\begin{proposition}\label{P:Big-Rec}
For each $\beta\in \msQ_+$, we have 
 \begin{equation*}
 \mbbL_\beta^\mbbV\in \pi_\mbbV(U_q(\mfn^+)_\beta)\otimes \mbU_\mrR^\lambda(\mfn^+)_\beta
 \end{equation*}
 with $\mbbL_{\alpha_i}^\mbbV=\pi_\mbbV(E_i)\otimes \Xr_i$ for each $i\in \mbI$. 
Moreover, these elements satisfy 
\begin{equation}\label{Rec:1'}
\begin{aligned}
 &[\mathscr{R}^{\alpha_i}_{\mathbb{V}},\mathbb{L}_\beta^\mbbV]\\
&=\mathbb{L}_{\beta-\alpha_i}^\mbbV (\mathbb{L}_{\alpha_i})_1\pi_\mbbV(K_i^{-1}) - q^{(\alpha_i,\beta-\alpha_i)}(\mathbb{L}_{\alpha_i})_1  \mathbb{L}_{\beta-\alpha_i}^\mbbV\pi_\mbbV(K_i)
\end{aligned}
\end{equation}
in $\pi(U_q(\mfn^+))\otimes \pi_\mbbV(U_q(\mfg))\otimes \URg$.
\end{proposition}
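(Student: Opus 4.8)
The plan is to treat Proposition \ref{P:Big-Rec} as the "$\mbbV$-fused" counterpart of Lemma \ref{L:Rec} and Theorem \ref{T:Gen}, the two principal inputs being the fused $RLL$ relations of Lemma \ref{L:LVn} and the identifications $\mathscr{R}^\beta_{\mathbb{V}}=(\pi\otimes \pi_\mbbV)(\mathscr{R}^+_\beta)$ and ${}_\mbbV\mathscr{R}_\beta=(\pi_\mbbV\otimes\pi)(\mathscr{R}^+_\beta)$ of Lemma \ref{L:R-mbbV}. Before starting I would record two bookkeeping facts. First, since $\Lz_{V^{\otimes(n+1)}}=\Lz_{V^{\otimes n}}\Lz_{n+1}$ and $\mrL_{n+1}=\Lz_{n+1}\mbbL_{n+1}$, the fused matrices obey the recursion $\mbbL_{V^{\otimes(n+1)}}=\Lz_{n+1}^{-1}\mbbL_{V^{\otimes n}}\Lz_{n+1}\mbbL_{n+1}$. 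Second, Theorem \ref{T:Lz} (the $\mathscr{l}_\mu$ are grouplike with $\mathscr{l}_\mu x_\beta\mathscr{l}_\mu^{-1}=q^{-(\mu,\beta)}x_\beta$) gives $\Lz_{n+1}^{-1}(\mbbL_{V^{\otimes n}})_\beta\Lz_{n+1}=(\mbbL_{V^{\otimes n}})_\beta\,\pi(K_\beta)_{n+1}$, so that taking the total weight-$\gamma$ component of the recursion yields
\[
(\mbbL_{V^{\otimes(n+1)}})_\gamma=\sum_{\beta+\delta=\gamma}(\mbbL_{V^{\otimes n}})_\beta\,\pi(K_\beta)_{n+1}\,(\mbbL_\delta)_{n+1}.
\]

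For the commutation relation \eqref{Rec:1'} I would mimic the derivation of \eqref{Rec:1} in Lemma \ref{L:Rec}, now extracting the $\End(V)_{\alpha_i}\otimes\End(V^{\otimes n})_{\beta-\alpha_i}$ component of the first relation of Lemma \ref{L:LVn}, summing over $n$, and assembling via the projections that define $\mathscr{R}^{\alpha_i}_{\mathbb{V}}$ and $\mbbL^\mbbV_{\bullet}$. The fused analogue of \eqref{eq:D-K}, namely $\Rd_{V,V^{\otimes n}}(\mbbL_{\alpha_i})_1\Rd_{V,V^{\otimes n}}^{-1}=(\mbbL_{\alpha_i})_1\pi_\mbbV(K_i)$, converts the $\Rd$-conjugations into $\pi_\mbbV(K_i^{\pm1})$, and pushing $\pi_\mbbV(K_i)$ past the weight-$(\beta-\alpha_i)$ operator $\mbbL^\mbbV_{\beta-\alpha_i}$ produces the scalar $q^{(\alpha_i,\beta-\alpha_i)}$. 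This yields \eqref{Rec:1'} verbatim.

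For the base case $\mbbL^\mbbV_{\alpha_i}=\pi_\mbbV(E_i)\otimes\Xr_i$ I would induct on $n$ using the displayed recursion. Since $\alpha_i$ is simple, the only pairs with $\beta+\delta=\alpha_i$ are $(\alpha_i,0)$ and $(0,\alpha_i)$, so $(\mbbL_{V^{\otimes(n+1)}})_{\alpha_i}=(\mbbL_{V^{\otimes n}})_{\alpha_i}\pi(K_i)_{n+1}+(\mbbL_{\alpha_i})_{n+1}$. With the inductive hypothesis $(\mbbL_{V^{\otimes n}})_{\alpha_i}=\pi_{V^{\otimes n}}(E_i)\otimes\Xr_i$ and Theorem \ref{T:Gen} for the factor $(\mbbL_{\alpha_i})_{n+1}=\pi(E_i)_{n+1}\otimes\Xr_i$, the endomorphism parts combine to $(\pi_{V^{\otimes n}}\otimes\pi)(E_i\otimes K_i+1\otimes E_i)=(\pi_{V^{\otimes n}}\otimes\pi)\Delta(E_i)=\pi_{V^{\otimes(n+1)}}(E_i)$, with common coefficient $\Xr_i$; the base case $n=1$ is Theorem \ref{T:Gen}.

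The remaining, and main, difficulty is the general structural statement $\mbbL^\mbbV_\beta\in\pi_\mbbV(U_q(\mfn^+)_\beta)\otimes\mbU_\mrR^\lambda(\mfn^+)_\beta$. I would prove it by induction on $\mathrm{ht}(\beta)$, running the argument of Theorem \ref{T:Gen} with $\mbbV$ in the "output" role. The point requiring care is that, unlike on $V$, the weight $\beta$ need not be a weight of $\End(V)$, so the $\eqref{Rec:2}$–$\eqref{Rec:3}$ step must be extracted from the \emph{second} relation of Lemma \ref{L:LVn} (with the big factor $V^{\otimes n}$ in slot $1$ and the auxiliary $V$ in slot $2$) rather than the first. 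Taking its $\End(V^{\otimes n})_\beta\otimes\End(V)_0$ component, summing over $n$, and applying a linear functional on the auxiliary slot attached to a weight $\mu$ of $V$ with $(\beta,\mu)\neq0$ (which exists since $\beta\neq 0$ does not annihilate $\wlattice(\lambda)$) produces the $\mbbV$-analogue of \eqref{Rec:3}, in which the $\pi(K_\beta^{-1}-K_\beta)$ factor evaluates to the nonzero scalar $q^{-(\beta,\mu)}-q^{(\beta,\mu)}$. Here Lemma \ref{L:R-mbbV} is essential: it identifies the relevant block of $\Rp_{V^{\otimes n},V}$ with $(\pi_\mbbV\otimes\pi)(\mathscr{R}^+_\beta)\in\pi_\mbbV(U_q(\mfn^+)_\beta)\otimes\pi(U_q(\mfn^-)_{-\beta})$, so that after applying the functional the output factor remains inside $\pi_\mbbV(U_q(\mfn^+)_\beta)$, while the lower-height terms are governed by the inductive hypothesis; this gives $\mbbL^\mbbV_\beta\in\pi_\mbbV(U_q(\mfn^+)_\beta)\otimes\URg_\beta$. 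Feeding \eqref{Rec:1'} (with $\Rp_{\alpha_i}=(q_i-q_i^{-1})\pi(E_i)\otimes\pi(F_i)$ and the base case) into the same induction then refines the coefficients from $\URg_\beta$ to $\mbU_\mrR^\lambda(\mfn^+)_\beta$, exactly as \eqref{Gen-step2} sharpens \eqref{Gen-step1}. I expect this structural step to be the crux, since one must simultaneously keep the endomorphism factor inside $\pi_\mbbV(\mfn^+)$ and the coefficients inside $\mbU_\mrR^\lambda(\mfn^+)$: the correlation between these two factors is precisely what defeats a naive term-by-term estimate on the recursion and forces the combined use of Lemma \ref{L:R-mbbV} and the height induction.
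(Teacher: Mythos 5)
Your proposal is correct and follows essentially the same route as the paper: \eqref{Rec:1'} is extracted from the first relation of Lemma \ref{L:LVn}; the identity $\mbbL^{\mbbV}_{\alpha_i}=\pi_\mbbV(E_i)\otimes \Xr_i$ comes from Theorem \ref{T:Gen} together with the conjugation properties of $\Lz$ (the paper simply unrolls your recursion into the explicit sum $\sum_{j}\pi(E_i)_j\pi(K_i)_{j+1}\cdots\pi(K_i)_n\otimes\Xr_i$); and the structural containment is obtained by height induction on the $\End(V^{\otimes n})_\beta\otimes\End(V)_0$ component of the second relation of Lemma \ref{L:LVn}, using Lemma \ref{L:R-mbbV} and a functional on the auxiliary slot at a weight $\mu$ with $(\beta,\mu)\neq 0$, exactly as in the paper. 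The only inessential difference is your two-pass (coarse-then-refine) structure: the paper performs a single induction, and your appeal to \eqref{Rec:1'} in the refinement step is superfluous, because the commutator term in the fused analogue of \eqref{Rec:2} is $[(\mbbL_V)_\beta,\,{}_{\mbbV}\mathscr{R}_\beta]$ — it involves the auxiliary-$V$ matrix $(\mbbL_V)_\beta$ rather than $\mbbL^{\mbbV}_\beta$ itself — so Theorem \ref{T:Gen} already places all of its coefficients in $\mbU_\mrR^\lambda(\mfn^+)_\beta$.
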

\begin{proof}

The proof that \eqref{Rec:1'} is satisfied follows the same argument as used to establish the relation \eqref{Rec:1} in Lemma \ref{L:Rec}.  
Taking the $\End(V)_{\alpha_i}\otimes \End(V^{\otimes n})_{\beta-\alpha_i}$ component of the first relation of Lemma \ref{L:LVn} yields
\begin{align*}
 (\Rp_{V, V^{\otimes n}})_{\alpha_i}&  (\mbbL_{V^{\otimes n}})_{\beta}+
 \left(\Rd_{V, V^{\otimes n}} (\mbbL_V)_{\alpha_i}\Rd_{V, V^{\otimes n}}^{-1}  \right) (\mbbL_{V^{\otimes n}})_{\beta-\alpha_i}\\
& 
=
(\mbbL_{V^{\otimes n}})_{\beta_i-\alpha_i}\left(\Rd_{V, V^{\otimes n}}^{-1} (\mbbL_V)_{\alpha_i}\Rd_{V, V^{\otimes n}}\right)
+
(\mbbL_{V^{\otimes n}})_\beta(\Rp_{V, V^{\otimes n}})_{\alpha_i}.
\end{align*}
 Since $\Rd_{V, V^{\otimes n}}^{-1} (\mbbL_V)_{\alpha_i}\Rd_{V, V^{\otimes n}}$ coincides with $(\mbbL_V)_{\alpha_i} \pi_{V^{\otimes n}}(K_i^{-1})$, this is equivalent to 
\begin{equation*}
 \begin{aligned}
 &[(\Rp_{V, V^{\otimes n}})_{\alpha_i}, (\mbbL_{V^{\otimes n}})_\beta]\\
 &=
 (\mbbL_{V^{\otimes n}})_{\beta_i-\alpha_i}(\mbbL_V)_{\alpha_i}\pi_{V^{\otimes n}}(K_i^{-1})
 - q^{(\alpha_i,\beta-\alpha_i)}
  (\mbbL_V)_{\alpha_i}(\mbbL_{V^{\otimes n}})_{\beta-\alpha_i}\pi_{V^{\otimes n}}(K_i).
  \end{aligned}
\end{equation*}
 As this holds for all $n$, we can conclude that \eqref{Rec:1'} is satisfied.

 Next, let us establish that $\mbbL_{\alpha_i}^\mbbV=\pi_\mbbV(E_i)\otimes \Xr_i$ for each $i\in \mbI$. By definition of $\mathbb{V}$, it is enough to show that 
\begin{equation*} 
 (\mbbL_{V^{\otimes n}})_{\alpha_i}=\pi_{V^{\otimes n}}(E_i)\otimes \Xr_i
 \quad \forall \quad n\in \N \; \text{ and }\;  i\in \mbI.
\end{equation*}
For each $1\leq j\leq n$, set $\Lz_{j+1,...,n}:=\Lz_{j+1}\cdots \Lz_n\in \End(V)^{\otimes n}\otimes \URg$, with the understanding that $\Lz_{j+1,...,n}=1$ if $j=n$. Then the definition of $\mbbL_{V^{\otimes n}}$ yields that 
\begin{align*}
(\mbbL_{V^{\otimes n}})_{\alpha_i}&=\sum_{j=1}^n \Lz_{j+1,...,n}^{-1} (\mbbL_j)_{\alpha_i} \Lz_{j+1,...,n}\\
&=
\sum_{j=1}^n (\mbbL_j)_{\alpha_i} \pi(K_i)_{j+1}\cdots \pi(K_i)_{n}\\
&
=\sum_{j=1}^n \pi(E_i)_j \pi(K_i)_{j+1}\cdots \pi(K_i)_{n} \otimes \Xr_i
=\pi_{V^{\otimes n}}(E_i)\otimes \Xr_i.
\end{align*}
The proof that, more generally, one has $\mbbL_\beta^\mbbV\in \pi_\mbbV(U_q(\mfn^+)_\beta)\otimes \mbU_\mrR^\lambda(\mfn^+)_\beta$ for any $\beta\in \msQ_+$ now proceeds following a simple generalization of the argument given in the proof of Theorem \ref{T:Gen}. To begin, taking the $\End(V^{\otimes n})_\beta\otimes \End(V)_0$ component of the second relation in Lemma \ref{L:LVn} yields
 \begin{align*}
  \Rd_{V^{\otimes n},V} &(\mbbL_{V^{\otimes n}})_{\beta}\Rd_{V^{\otimes n},V}^{-1}+(\Rp_{V^{\otimes n},V})_\beta  (\mbbL_V)_\beta\\
  &+
 \sum_{\alpha\in \dot\msQ_+^\beta} (\Rp_{V^{\otimes n},V})_\alpha \left(\Rd_{V^{\otimes n},V} (\mbbL_{V^{\otimes n}})_{\beta-\alpha}\Rd_{V^{\otimes n},V}^{-1}  \right) (\mbbL_V)_\alpha\\
&=\Rd_{V^{\otimes n},V}^{-1} (\mbbL_{V^{\otimes n}})_{\beta}\Rd_{V^{\otimes n},V} +
(\mbbL_V)_\beta(\Rp_{V^{\otimes n},V})_\beta\\
&\quad+\sum_{\alpha\in \dot\msQ_+^\beta}(\mbbL_V)_\alpha\left(\Rd_{V^{\otimes n},V}^{-1} (\mbbL_{V^{\otimes n}})_{\beta-\alpha}\Rd_{V^{\otimes n},V} \right)(\Rp_{V^{\otimes n},V})_\alpha,
 \end{align*}
where we recall that $\dot\msQ_+^\beta$ is defined at the end of Section \ref{ssec:g}.
%
 This implies that in $\mathrm{Hom}(\mbbV\otimes V, \mathbb{V}\otimes V\otimes \URg)$ one has
 \begin{equation}\label{Rec:2'}
 \begin{gathered}
 \mbbL_{\beta}^\mbbV\,\pi_V(K_\beta-K_\beta^{-1})=[(\mbbL_V)_\beta,\, _{\mbbV}\mathscr{R}_\beta]-\sum_{\alpha\in \dot\msQ_+^\beta}\mbbL_{\beta,\alpha}^{\mbbV},\\
 \mbbL_{\beta,\alpha}^{\mbbV}:=\, _{\mbbV}\mathscr{R}_\alpha \cdot \mbbL^\mbbV_{\beta-\alpha}\pi_V(K_{\beta-\alpha}) (\mbbL_V)_\alpha - (\mbbL_V)_\alpha \mbbL^{\mbbV}_{\beta-\alpha}\pi_V(K_{\beta-\alpha}^{-1})\cdot\, _{\mbbV}\mathscr{R}_\alpha.
 \end{gathered}
 \end{equation}
By Lemma \ref{L:R-mbbV} and Theorem \ref{T:Gen}, we have $_{\mbbV}\mathscr{R}_\alpha\in \pi_\mbbV(U_q(\mfn^+)_\alpha)\otimes \pi(U_q(\mfn^-)_{-\alpha})$ and  $(\mbbL_V)_\alpha \in \pi(U_q(\mfn^+)_\alpha)\otimes \mbU_\mrR^\lambda(\mfn^+)_\alpha$ for each $\alpha\in \msQ_+$. Using these facts and induction on the height of $\beta$, one deduces from \eqref{Rec:2'} that $\mbbL_\beta^\mbbV\in \pi_\mbbV(U_q(\mfn^+)_\beta)\otimes \mbU_\mrR^\lambda(\mfn^+)_\beta$ for all $\beta\in \msQ_+$, as desired. 
\qedhere
 \end{proof}
For each $i\in \mbI$,  introduce a degree $\alpha_i$ linear endomorphism $\mathrm{ad}_{q,i}$ of $\URg$ by 
 \begin{equation*}
 \mathrm{ad}_{q,i}(y)=y\Xr_i-q^{(\alpha_i,\beta)}\Xr_iy \in \URg_{\beta+\alpha_i}\quad \forall \quad y\in \URg_\beta.
 \end{equation*}
Since $\pi_\mbbV$ is injective, the triangular decomposition \eqref{Uqg:TD} for $U_q(\mfg)$ implies that there is a unique $\Q(q)$-linear map $\varepsilon_\mbbV^+:\pi_\mbbV(U_q(\mfg))\to \pi_\mbbV(U_q(\mfn^+))$ satisfying 
 \begin{equation}\label{veps+}
 \varepsilon_\mbbV^+(\pi_\mbbV(xy))=\pi_\mbbV(x)\veps(y) \quad \forall \quad x\in U_q(\mfn^+),\, y\in U_q(\mfb^-),
 \end{equation}
 where $\veps:U_q(\mfg)\to \Q(q)$ is the counit. We then have the following result.  
\begin{proposition}\label{P:Almost-Serre}
For each $i\in \mbI$, $k\geq 1$ and $\beta\in \msQ_+$, we have 
\begin{equation*}
\veps_\mbbV^+ \cdot \mathrm{ad}(\pi_\mbbV(F_i))^k\left(\mbbL_\beta^\mbbV\right)
=
\frac{1}{(q_i-q_i^{-1})^k}\mathrm{ad}_{q,i}^k \left(\mbbL_{\beta-k\alpha_i}^\mbbV\right)
\end{equation*}
in $\pi_\mbbV(U_q(\mfn^+))\otimes \URg$. Moreover, if $\beta=k\alpha_i+\alpha_j$ for some $j\neq i$ and $k=1-a_{ij}$, then the left-hand side of the above equality vanishes and therefore
\begin{equation*}
\mathrm{ad}_{q,i}^{1-a_{ij}}(\Xr_j)=0. 
\end{equation*}

\end{proposition}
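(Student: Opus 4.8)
The plan is to prove the displayed formula by induction on $k$, and then to combine it with the quantum Serre relations of $U_q(\mfg)$ to obtain the vanishing. Throughout I abbreviate $D:=\mathrm{ad}(\pi_\mbbV(F_i))=[\pi_\mbbV(F_i)\otimes 1,\,-\,]$, which is a derivation of $\pi_\mbbV(U_q(\mfg))\otimes\URg$, and I keep in mind that $\mathrm{ad}_{q,i}$ acts on the second ($\URg$) tensor factor only. The starting point is the relation \eqref{Rec:1'}. Using Lemma \ref{L:R-mbbV} to write $\mathscr{R}^{\alpha_i}_{\mbbV}=(q_i-q_i^{-1})\pi(E_i)\otimes\pi_\mbbV(F_i)$ and Theorem \ref{T:Gen} to write $(\mbbL_{\alpha_i})_1=\pi(E_i)\otimes\Xr_i$, both sides of \eqref{Rec:1'} equal $\pi(E_i)$ tensored with an element of $\pi_\mbbV(U_q(\mfg))\otimes\URg$; since $\pi(E_i)\neq 0$ I may cancel it to obtain
\begin{equation*}
(q_i-q_i^{-1})\,D(\mbbL_\beta^\mbbV)
=\mbbL_{\beta-\alpha_i}^\mbbV\big(\pi_\mbbV(K_i^{-1})\otimes\Xr_i\big)
-q^{(\alpha_i,\beta-\alpha_i)}\big(1\otimes\Xr_i\big)\,\mbbL_{\beta-\alpha_i}^\mbbV\,\big(\pi_\mbbV(K_i)\otimes 1\big).
\tag{$\ast$}
\end{equation*}
This expresses $D(\mbbL_\beta^\mbbV)$ through $\mbbL_{\beta-\alpha_i}^\mbbV$, dressed on the $\URg$-factor by $\Xr_i$ and on the $\pi_\mbbV$-factor by $\pi_\mbbV(K_i^{\pm1})$.

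To run the induction I record three bookkeeping facts. First, $D$ commutes exactly with left and right multiplication of the $\URg$-factor by $\Xr_i$, as $\pi_\mbbV(F_i)\otimes 1$ and $1\otimes\Xr_i$ act on different factors. Second, letting $\kappa^{\pm}$ denote right multiplication of the $\pi_\mbbV$-factor by $\pi_\mbbV(K_i^{\pm1})$, the computation $[\pi_\mbbV(F_i),\pi_\mbbV(K_i^{\mp1})]=(q_i^{\pm2}-1)\pi_\mbbV(K_i^{\mp1}F_i)$ gives $D\kappa^{\pm}=\kappa^{\pm}D+\text{(term with }\pi_\mbbV\text{-factor in }\pi_\mbbV(U_q(\mfg)F_i))$. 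Writing $\mcB:=\pi_\mbbV(U_q(\mfg)F_i)\otimes\URg$, the identity $[\pi_\mbbV(F_i),\pi_\mbbV(wF_i)]=\pi_\mbbV([F_i,w]F_i)$ shows $\mcB$ is $D$-stable, whence $D^{m}\kappa^{\pm}\equiv\kappa^{\pm}D^{m}\pmod{\mcB}$ for all $m$. Third, by the triangular decomposition \eqref{Uqg:TD} and \eqref{veps+}, the projection $\varepsilon_\mbbV^+$ annihilates $\mcB$, commutes with multiplication of the $\URg$-factor by $\Xr_i$, and satisfies $\varepsilon_\mbbV^+\circ\kappa^{\pm}=\varepsilon_\mbbV^+$ (right multiplication by the grouplike $\pi_\mbbV(K_i^{\pm1})$, which has counit $1$, does not change the $U_q(\mfn^+)$-component). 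The base case $k=1$ follows by applying $\varepsilon_\mbbV^+$ directly to $(\ast)$. For the inductive step I apply $D^{k-1}$ to $(\ast)$ and then $\varepsilon_\mbbV^+$; the three facts collapse the right-hand side, and with $W:=\varepsilon_\mbbV^+D^{k-1}(\mbbL_{\beta-\alpha_i}^\mbbV)=\tfrac{1}{(q_i-q_i^{-1})^{k-1}}\mathrm{ad}_{q,i}^{k-1}(\mbbL_{\beta-k\alpha_i}^\mbbV)$ (inductive hypothesis) I am left with
\begin{equation*}
(q_i-q_i^{-1})\,\varepsilon_\mbbV^+D^{k}(\mbbL_\beta^\mbbV)
= W\big(1\otimes\Xr_i\big)-q^{(\alpha_i,\beta-\alpha_i)}\big(1\otimes\Xr_i\big)W
=\mathrm{ad}_{q,i}(W),
\end{equation*}
the last equality because the $\URg$-factor of $W$ is homogeneous of degree $\beta-\alpha_i$, so the weight exponent in $(\ast)$ is exactly the one in the definition of $\mathrm{ad}_{q,i}$. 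Substituting $W$ and dividing by $q_i-q_i^{-1}$ finishes the induction.

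For the final assertion, take $k=1-a_{ij}$ and $\beta=k\alpha_i+\alpha_j$, so $\mbbL_{\beta-k\alpha_i}^\mbbV=\mbbL_{\alpha_j}^\mbbV=\pi_\mbbV(E_j)\otimes\Xr_j$ by Proposition \ref{P:Big-Rec}. The formula just proved then reads $\varepsilon_\mbbV^+\mathrm{ad}(\pi_\mbbV(F_i))^{k}(\mbbL_\beta^\mbbV)=\tfrac{1}{(q_i-q_i^{-1})^{k}}\,\pi_\mbbV(E_j)\otimes\mathrm{ad}_{q,i}^{1-a_{ij}}(\Xr_j)$; since $\pi_\mbbV(E_j)\neq 0$ by faithfulness of $\mbbV$, the vanishing of the left-hand side will force $\mathrm{ad}_{q,i}^{1-a_{ij}}(\Xr_j)=0$. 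Writing $\mbbL_\beta^\mbbV=\sum_c\pi_\mbbV(u_c)\otimes Z_c$ with $u_c\in U_q(\mfn^+)_\beta$ (Proposition \ref{P:Big-Rec}) and using $\mathrm{ad}(\pi_\mbbV(F_i))^k\circ\pi_\mbbV=\pi_\mbbV\circ\mathrm{ad}(F_i)^k$ together with $\pi_\mbbV\circ\varepsilon^+=\varepsilon_\mbbV^+\circ\pi_\mbbV$, the needed vanishing reduces to the purely $U_q(\mfg)$-theoretic statement $\varepsilon^+\big(\mathrm{ad}(F_i)^{1-a_{ij}}(u)\big)=0$ for every $u\in U_q(\mfn^+)_{(1-a_{ij})\alpha_i+\alpha_j}$, where $\varepsilon^+:U_q(\mfg)\to U_q(\mfn^+)$ is the triangular projection.

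I expect this last statement to be the main obstacle, since it is the only point at which genuine input beyond the defining relations of $\URg$ is required. The same good-part/bad-part analysis as above, now with $\URg$ absent, shows $\varepsilon^+\circ\mathrm{ad}(F_i)^{k}=\partial_i^{\,k}$ on $U_q(\mfn^+)$, where $\partial_i:=\varepsilon^+\circ\mathrm{ad}(F_i)$ is the skew-derivation read off from the commutation formula $[F_i,u]=\tfrac{r_i'(u)K_i-K_i^{-1}r_i(u)}{q_i-q_i^{-1}}$ for $u\in U_q(\mfn^+)$. The claim then becomes that $\partial_i^{\,1-a_{ij}}$ annihilates $U_q(\mfn^+)_{(1-a_{ij})\alpha_i+\alpha_j}$; as $\partial_i$ lowers the $\alpha_i$-content by one and this weight space lies in the rank-two subalgebra generated by $E_i$ and $E_j$ and is spanned by the monomials $E_i^{b}E_jE_i^{1-a_{ij}-b}$, this is a finite computation with the twisted Leibniz rule whose vanishing is exactly the $q$-Serre relation for the pair $(i,j)$. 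Combining this with the formula from the first part yields $\mathrm{ad}_{q,i}^{1-a_{ij}}(\Xr_j)=0$, as desired.
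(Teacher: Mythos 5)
Your proof of the displayed identity is correct, and it is essentially the paper's own argument in different clothing: cancelling the $\pi(E_i)$-slot from \eqref{Rec:1'} (legitimate, since every term there is $\pi(E_i)$ tensored with an element of $\pi_\mbbV(U_q(\mfg))\otimes\URg$ and $\pi(E_i)\neq 0$) and then inducting on $k$ with your operators $\kappa^{\pm}$ and the $D$-stable subspace $\mcB=\pi_\mbbV(U_q(\mfg)F_i)\otimes\URg$ is the same induction the paper runs in its proof of \eqref{magic-serre}, resting on the same two pillars: relation \eqref{Rec:1'} and the stability of $\Ker(\veps_\mbbV^+)$ under $\mathrm{ad}(\pi_\mbbV(F_i))$. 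Your reduction of the second assertion to the purely $U_q(\mfg)$-theoretic statement that $\veps^+\bigl(\mathrm{ad}(F_i)^{1-a_{ij}}(u)\bigr)=0$ for all $u\in U_q(\mfn^+)_{(1-a_{ij})\alpha_i+\alpha_j}$ also matches the paper exactly.

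The gap is that this last statement, which you yourself flag as ``the main obstacle,'' is never proved: you assert it is a finite computation with the twisted Leibniz rule whose vanishing ``is exactly the $q$-Serre relation,'' but no computation is given, and the tool you propose does not exist as described. The operator $\partial_i=\veps^+\circ\mathrm{ad}(F_i)$ is \emph{not} a skew-derivation: applying $\veps^+$ to your formula for $[F_i,u]$ gives, for $u\in U_q(\mfn^+)_\nu$, $\partial_i(u)=\bigl(r_i'(u)-q^{-(\alpha_i,\nu-\alpha_i)}r_i(u)\bigr)/(q_i-q_i^{-1})$, a weight-dependent mixture of the two Lusztig skew-derivations, which obey \emph{different} twisted Leibniz rules; $\partial_i$ itself satisfies no product rule, so the computation cannot be organized around $\partial_i$ alone. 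Moreover, the difficulty is concentrated exactly where your sketch is silent, namely on the one spanning monomial $E_i^{1-a_{ij}}E_j$ that does not end in $E_i$. A direct check shows $\veps^+\bigl(\mathrm{ad}(F_i)(E_iE_j)\bigr)=-[a_{ij}]_{q_i}E_j\neq 0$ whenever $a_{ij}\neq 0$, so the desired vanishing on such monomials fails for $k<1-a_{ij}$ and is a special feature of the exponent $k=1-a_{ij}$; your write-up contains no mechanism that distinguishes this value of $k$, so the step is a genuine hole rather than routine bookkeeping.

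For comparison, the paper closes this gap without ever touching the monomial $E_i^{1-a_{ij}}E_j$. First, it invokes the $q$-Serre relation \emph{of} $U_q(\mfg)$ to note that $U_q(\mfn^+)_{(1-a_{ij})\alpha_i+\alpha_j}$ is already spanned by the monomials $E_i^rE_jE_i^{1-a_{ij}-r}$ with $r<1-a_{ij}$, i.e.\ by monomials of the form $XE_i$; this spanning refinement (not any cancellation inside a computation) is where the Serre relations of $U_q(\mfg)$ enter, and it is absent from your argument, which spans with all $0\le b\le 1-a_{ij}$. Second, its Claim shows $\mathrm{ad}(F_i)^k(XE_i)\in\Ker(\veps^+)$ for every such monomial and every $k$: since $F_\ell E_i\in\Ker(\veps^+)$ for all $\ell\in\mbI$ and $\Ker(\veps^+)$ is $\mathrm{ad}(F_i)$-stable, one has $\mathrm{ad}(F_i)^k(XE_i)\equiv\mathrm{ad}(F_i)^k(X)E_i$ modulo $\Ker(\veps^+)$, and $\mathrm{ad}(F_i)^k(X)$ has degree $\alpha_j-\alpha_i$, hence lies in $U_q(\mfb^+)\,\mcJ_i$ with $\mcJ_i\subset U_q(\mfn^-)$ the two-sided ideal generated by $F_i$, whence $\mathrm{ad}(F_i)^k(X)E_i\in\Ker(\veps^+)$. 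To complete your proof you must either reproduce this argument or carry out an honest computation with $r_i$ and $r_i'$ tracked separately (essentially the known lemma that Lusztig's skew-derivations annihilate the quantum Serre elements); as written, the decisive step is only asserted.
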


\begin{proof}
For elements $x\in \URg_\alpha$ and $y\in\URg_\beta$, let us introduce the $q$-bracket 
 \begin{equation*}
 [x,y]_q:=xy-q^{(\alpha,\beta)}yx \in \URg_{\beta+\alpha}. 
 \end{equation*}
 By Lemma \ref{L:R-mbbV} we have $\mathscr{R}_\mbbV^{\alpha_i}=(q_i-q_i^{-1}) \pi(E_i)\otimes \pi_\mbbV(F_i)$ and hence the main identity of the proposition is equivalent to the following relation in $\pi(U_q(\mfn^+))^{\otimes k}\otimes \pi_\mbbV(U_q(\mfg))\otimes \URg$:
\begin{equation}\label{magic-serre}
\begin{aligned}
\veps_\mbbV^+ &\left[ (\mathscr{R}_\mbbV^{\alpha_i})_{1,k+1},\left[(\mathscr{R}_\mbbV^{\alpha_i})_{2,k+1},\cdots \left[ (\mathscr{R}_\mbbV^{\alpha_i})_{k,k+1}, \mbbL_\beta^\mbbV\right]\cdots \right]\right]\\
&
=
\left[ \cdots \left[ \left[ \mbbL^\mbbV_{\beta-k\alpha_i},(\mbbL_{\alpha_i})_1\right]_q, (\mbbL_{\alpha_i})_2\right]_q, \ldots,(\mbbL_{\alpha_i})_k\right]_q.
\end{aligned}
\end{equation}
We prove this identity by induction on $k$ using the relation \eqref{Rec:1'} established in the previous proposition. When $k=1$, that relation implies immediately that 
\begin{equation*}
  \begin{aligned}
\veps_\mbbV^+ [&\mathscr{R}_{\mbbV}^{\alpha_i}, \mbbL_\beta^\mbbV]\\
 &=
 \mbbL_{\beta_i-\alpha_i}^\mbbV(\mbbL_{\alpha_i})_1
 - q^{(\alpha_i,\beta-\alpha_i)}
  (\mbbL_{\alpha_i})_1\mbbL^{\mbbV}_{\beta-\alpha_i}=[\mbbL^{\mbbV}_{\beta-\alpha_i},(\mbbL_{\alpha_i})_1]_q,
  \end{aligned}
  \end{equation*}
as desired.   Suppose now that \eqref{magic-serre} holds for some fixed $k$. As $\mathrm{Ker}(\veps_\mbbV^+)$ is preserved by $\mathrm{ad}(\pi_\mbbV(F_i))$, to establish \eqref{magic-serre} for $k+1$ it suffices to show that 
\begin{equation*}
\begin{aligned}
\veps_\mbbV^+ &\left[ (\mathscr{R}_\mbbV^{\alpha_i})_{1,k+2},\left[ \cdots \left[ \left[ \mbbL^\mbbV_{\beta-k\alpha_i},(\mbbL_{\alpha_i})_2\right]_q, (\mbbL_{\alpha_i})_3\right]_q, \ldots,(\mbbL_{\alpha_i})_{k+1}\right]_q\right]\\
&
=
\left[ \cdots \left[ \left[ \mbbL^\mbbV_{\beta-(k+1)\alpha_i},(\mbbL_{\alpha_i})_1\right]_q, (\mbbL_{\alpha_i})_2\right]_q, \ldots,(\mbbL_{\alpha_i})_k\right]_q.
\end{aligned}
\end{equation*}
Since the left-hand side is 
\begin{equation*}
\veps_\mbbV^+ \left[ \cdots \left[ \left[ [(\mathscr{R}_\mbbV^{\alpha_i})_{1,k+2},\mbbL^\mbbV_{\beta-k\alpha_i}],(\mbbL_{\alpha_i})_2\right]_q, (\mbbL_{\alpha_i})_3\right]_q, \ldots,(\mbbL_{\alpha_i})_{k+1}\right]_q,
\end{equation*}
this follows by again applying \eqref{Rec:1'}. Thus, we may conclude that \eqref{magic-serre}, and therefore the main identity of the proposition, holds for all $k\geq 1$.

Suppose now that $j\in \mbI$ is such that $j\neq i$, and set $\beta=(1-a_{ij})\alpha_i+\alpha_j$. To complete the proof of the proposition, we must explain why 
\begin{equation*}
\veps_\mbbV^+ \cdot \mathrm{ad}(\pi_\mbbV(F_i))^{1-a_{ij}}\left(\mbbL_\beta^\mbbV\right)=0.
\end{equation*}
Since $\pi_{\mbbV}(U_q(\mfn^+)_{\beta})$ is spanned by the elements $\pi_\mbbV(E_i^r E_j E_i^{1-a_{ij}-r})$ for $0\leq r<1-a_{ij}$, the above equality is a consequence of the following more general claim:

\noindent \textit{Claim}. Given $i\neq j$ in $\mbI$ and $k\geq 1$, set $E_{ij}^{(r)}:=E_i^r E_j E_i^{k-r}\in U_q(\mfg)$. Then
 \begin{equation*}
 \mathrm{ad}(F_i)^k(E_{i,j}^{(r)})\in \mathrm{Ker}(\veps^+)\quad \forall \quad 0\leq r< k,
 \end{equation*}
 where $\veps^+:U_q(\mfg)\to U_q(\mfn^+)$ is the unique linear map satisfying $\pi_\mbbV\circ \veps^+=\veps_\mbbV^+\circ \pi_\mbbV$. 
 
\begin{proof}[Proof of claim]
First note that for each $\ell\in \mbI$, we have  
 \begin{equation*}
 F_\ell E_i=\delta_{\ell,i}\frac{K_i^{-1}-K_i}{q_i-q_i^{-1}}+E_i F_\ell \in \mathrm{Ker}(\veps^+).
 \end{equation*}
 Since $\mathrm{Ker}(\veps^+)$ is invariant under $\mathrm{ad}(F_i)$, we obtain 
 \begin{equation*}
 \mathrm{ad}(F_i)^{k}(\msE_{i,j}^{(r)})= \mathrm{ad}(F_i)^{k}(XE_i)=\mathrm{ad}(F_i)^{k}(X)E_i \mod \mathrm{Ker}(\veps^+)
 \end{equation*}
 for all $0\leq r<k$, where $X=E_i^r E_j E_i^{k-r-1}$.  Note that $\mathrm{ad}(F_i)^{k}(X)$ has degree $\alpha_j-\alpha_i$, and so belongs to the subspace $U_q(\mfb^+)\otimes \mcJ_i$ of $U_q(\mfg)$, where $\mcJ_i$ is the two sided ideal of $U_q(\mfn^-)$ generated by $F_i$. Since $F_\ell E_i\in \mathrm{Ker}(\veps^+)$ for all $\ell\in \mbI$, we can conclude that $\mathrm{ad}(F_i)^{k}(X)E_i\in \mathrm{Ker}(\veps^+)$. This implies the statement of the claim, and thus completes the proof of the proposition. \qedhere
 \end{proof} \let\qed\relax
\end{proof}
The $q$-Serre relation \eqref{main-suff:5} can now be deduced as a simple corollary of the second assertion of Proposition \ref{P:Almost-Serre}.
\begin{corollary}\label{C:Serre}
Let $i,j\in \mbI$ with $i\neq j$. Then $\Xr_i$ and $\Xr_j$ satisfy the relation \eqref{main-suff:5}:
\begin{equation*}
\sum_{r=0}^{1-a_{ij}} (-1)^r \sbinom{1-a_{ij}}{r}_{q_i}\Xr_i^r \Xr_j \Xr_i^{1-a_{ij}-r}=0. 
\end{equation*}
\end{corollary}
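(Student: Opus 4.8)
The plan is to read off the relation directly from the second assertion of Proposition \ref{P:Almost-Serre}, which records that $\mathrm{ad}_{q,i}^{1-a_{ij}}(\Xr_j)=0$. It therefore suffices to expand the iterated twisted adjoint $\mathrm{ad}_{q,i}^{k}(\Xr_j)$ in terms of the ordered monomials $\Xr_i^r\Xr_j\Xr_i^{k-r}$ and to check that, at the critical height $k=1-a_{ij}$, the resulting expression is exactly the left-hand side of the claimed $q$-Serre relation \eqref{main-suff:5}. Recall from Theorem \ref{T:Gen} that $\Xr_i\in\URg_{\alpha_i}$ and $\Xr_j\in\URg_{\alpha_j}$, so everything below takes place inside the graded pieces $\URg_{k\alpha_i+\alpha_j}$.

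First I would establish, by induction on $k\geq 0$, the closed formula
\[
\mathrm{ad}_{q,i}^{k}(\Xr_j)=\sum_{r=0}^{k}(-1)^r q_i^{\,r(k+a_{ij}-1)}\sbinom{k}{r}_{q_i}\,\Xr_i^r\Xr_j\Xr_i^{k-r}.
\]
The base case $k=0$ is trivial. For the inductive step the key point is that each summand $\Xr_i^r\Xr_j\Xr_i^{k-r}$ is homogeneous of degree $k\alpha_i+\alpha_j$; hence, by the very definition of $\mathrm{ad}_{q,i}$, the operator acts on it by right multiplication by $\Xr_i$ followed by subtraction of $q^{(\alpha_i,\,k\alpha_i+\alpha_j)}=q_i^{\,2k+a_{ij}}$ times left multiplication by $\Xr_i$. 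Feeding the inductive formula into this description and reindexing the two resulting sums, the coefficient of $(-1)^r\Xr_i^r\Xr_j\Xr_i^{k+1-r}$ becomes $q_i^{\,r(k+a_{ij}-1)}\sbinom{k}{r}_{q_i}+q_i^{\,(r-1)(k+a_{ij}-1)+2k+a_{ij}}\sbinom{k}{r-1}_{q_i}$. Matching this against the target coefficient $q_i^{\,r(k+a_{ij})}\sbinom{k+1}{r}_{q_i}$ and cancelling the common factor $q_i^{\,r(k+a_{ij}-1)}$ reduces the identity to
\[
\sbinom{k+1}{r}_{q_i}=q_i^{-r}\sbinom{k}{r}_{q_i}+q_i^{\,k+1-r}\sbinom{k}{r-1}_{q_i},
\]
which is precisely the $q$-Pascal rule for the symmetric Gaussian binomials $\sbinom{n}{m}_{q_i}$ of Section \ref{ssec:Uqg-basic}. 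This closes the induction.

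With the closed formula in hand, I would then specialize to $k=1-a_{ij}$. For this value the exponent satisfies $r(k+a_{ij}-1)=r(1-a_{ij}+a_{ij}-1)=0$ for every $r$, so the extraneous powers of $q_i$ collapse to $1$ and the formula reads
\[
\mathrm{ad}_{q,i}^{\,1-a_{ij}}(\Xr_j)=\sum_{r=0}^{1-a_{ij}}(-1)^r\sbinom{1-a_{ij}}{r}_{q_i}\Xr_i^r\Xr_j\Xr_i^{1-a_{ij}-r}.
\]
Since the left-hand side vanishes by Proposition \ref{P:Almost-Serre}, this is exactly the asserted relation \eqref{main-suff:5}, completing the proof.

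The only genuine subtlety — and hence the step I would treat most carefully — is the bookkeeping of the weight-dependent scalar $q_i^{\,2k+a_{ij}}$ and the accompanying prefactor $q_i^{\,r(k+a_{ij}-1)}$: one must track these powers precisely enough to see both that the recursion is governed by the correct form of $q$-Pascal's rule in the symmetric convention $[m]_{q_i}=(q_i^{m}-q_i^{-m})/(q_i-q_i^{-1})$, and that the prefactor degenerates to $1$ exactly at $k=1-a_{ij}$. Everything else in the argument is a routine induction.
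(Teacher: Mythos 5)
Your proposal is correct and follows exactly the paper's argument: deduce the relation from $\mathrm{ad}_{q,i}^{1-a_{ij}}(\Xr_j)=0$ in Proposition \ref{P:Almost-Serre}, combined with the closed-form expansion of $\mathrm{ad}_{q,i}^k(\Xr_j)$ proved by induction via the $q$-Pascal identity (your exponent $q_i^{r(k+a_{ij}-1)}$ is identical to the paper's $q_i^{-r(1-a_{ij}-k)}$). The induction step and the observation that the prefactor collapses to $1$ precisely at $k=1-a_{ij}$ are exactly as in the paper's proof.
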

\begin{proof}
This follows from the relation $\mathrm{ad}_{q,i}^{1-a_{ij}}(\Xr_j)=0$ of Proposition \ref{P:Almost-Serre} and the following identity, which is a simple application of the $q$-analogue of Pascal's identity (see \cite{Jantzen-Book}*{\S0.2}): For each $i,j\in \mbI$ with $i\neq j$ and $k\geq 0$, we have 
\begin{equation*}
\mathrm{ad}_{q,i}^k(\Xr_j)=\sum_{r=0}^k (-1)^r q_i^{-r(1-a_{ij}-k)}\sbinom{k}{r}_{q_i}\Xr_i^r \Xr_j \Xr_i^{k-r}. \qedhere
\end{equation*}
\end{proof}

\subsection{Recovering \texorpdfstring{$U_q(\mfg)$}{U\_q(g)}}\label{ssec:Cons-R}
 By Theorem \ref{T:main}, the subalgebra of $\URg$ generated by $\{\upxi_i^{\pm 1},\Xr_i,\Yr_i\}_{i\in \mbI}$ is isomorphic to the Drinfeld--Jimbo algebra $U_q(\mfg)$. In this section, we provide equivalent characterizations of this subalgebra which are natural from the point of view of the $R$-matrix formalism. 
 To begin, following \eqref{y:1}, we define 
\begin{equation*}
\EuScript{Z}:=(\mathscr{l}_\lambda)^{\olambda} \upxi_{\olambda \lambda}=(\mathscr{l}_\lambda)^{\olambda}\prod_{j\in \mbI}\upxi_j^{n_j}\in \URg,
\end{equation*}
where $n_j=d_j^{-1}(\varpi_j, \olambda \lambda)$ for each $j\in \mbI$. By Theorem \ref{T:Lz} and \eqref{Ad-upxi}, this element belongs to the center of $\URg$. In fact, it follows readily from this definition that
\begin{equation*}
\Upupsilon(\EuScript{Z})=v_\lambda=v^{\olambda}.
\end{equation*}
Next, mimicking the notation from Section \ref{ssec:bUqg-alg}, we introduce the subalgebra 
\begin{equation*}
\URg^\upchi:=\{x\in \URg: \upchi_\zeta(x)=x\quad \forall \; \zeta\in \Q(q)^\times\}\subset \URg,
\end{equation*}
where $\upchi_\zeta$ is as in Proposition \ref{P:R-aut}. From the definition of the isomorphism $\Upupsilon$ given in Theorem \ref{T:main}, we see  that 
\begin{equation*}
\upchi_{q,\zeta}\circ \Upupsilon=\Upupsilon\circ \upchi_\zeta \quad \forall\quad \zeta\in \Q(q)^\times,
\end{equation*}
where $\upchi_{q,\zeta}$ is the automorphism of $\bUqg$ defined in Section \ref{ssec:bUqg-alg}. In particular, $\Upupsilon$ restricts to an isomorphism between $\URg^\upchi$ and the subalgebra of $\bUqg$ consisting of elements fixed by each $\upchi_{q,\zeta}$.  This discussion, combined with Parts \eqref{Uqlambda:3} and \eqref{Uqlambda:4} of Proposition \ref{P:bUqg-str}, admits the following corollary.
\begin{corollary}\label{C:Recover}
$\Upupsilon$ gives rise to Hopf algebra isomorphisms 
\begin{equation*}
\Upupsilon|_{\URg^\upchi}:\URg^\upchi\iso U_q(\mfg) \quad \text{ and }\quad \bar{\Upupsilon}:\URg/(\EuScript{Z}-1)\iso  U_q^\lambda(\mfg).
\end{equation*}
\end{corollary}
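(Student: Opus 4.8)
The plan is to deduce both isomorphisms by transporting the two structural facts about $\bUqg$ recorded in Proposition \ref{P:bUqg-str}\eqref{Uqlambda:3} and \eqref{Uqlambda:4} across the Hopf algebra isomorphism $\Upupsilon$ of Theorem \ref{T:main}, using the two compatibilities already established in the discussion preceding the corollary: the intertwining relation $\upchi_{q,\zeta}\circ \Upupsilon=\Upupsilon\circ \upchi_\zeta$ and the identity $\Upupsilon(\EuScript{Z})=v_\lambda$. Essentially no new computation is needed beyond these inputs; the only work is to justify the word "Hopf" in the statement.

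For the first isomorphism, the intertwining relation forces $\Upupsilon$ to carry the $\upchi_\zeta$-fixed subalgebra $\URg^\upchi$ bijectively onto the $\upchi_{q,\zeta}$-fixed subalgebra of $\bUqg$, which equals $U_q(\mfg)$ by Proposition \ref{P:bUqg-str}\eqref{Uqlambda:3}. One subtlety deserves attention: since the automorphisms $\upchi_\zeta$ are declared only as algebra automorphisms in Proposition \ref{P:R-aut}, the fixed subalgebra $\URg^\upchi$ is not a priori a Hopf subalgebra. To upgrade $\Upupsilon|_{\URg^\upchi}$ to a Hopf algebra isomorphism I would instead argue through the target: $U_q(\mfg)$ is a Hopf subalgebra of $\bUqg$ (its coproduct, counit and antipode all remain inside $U_q(\mfg)$), so its preimage under the Hopf isomorphism $\Upupsilon$ is a Hopf subalgebra of $\URg$; as this preimage is exactly $\URg^\upchi$, the restriction $\Upupsilon|_{\URg^\upchi}\colon \URg^\upchi\iso U_q(\mfg)$ is a bijective Hopf algebra morphism, hence a Hopf algebra isomorphism.

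For the second isomorphism, I would first observe that $\EuScript{Z}=(\mathscr{l}_\lambda)^{\olambda}\upxi_{\olambda\lambda}$ is grouplike: by Theorem \ref{T:Lz} each $\mathscr{l}_\mu$ is grouplike, whence each $\upxi_j=\mathscr{l}_{\mu+\alpha_j}^{-1}\mathscr{l}_\mu$ is grouplike, and a product of grouplike elements is grouplike. Consequently $(\EuScript{Z}-1)$ is a Hopf ideal of $\URg$, so $\URg/(\EuScript{Z}-1)$ inherits a Hopf algebra structure; the same remark applied to the grouplike element $v_\lambda=v^{\olambda}$ shows $(v_\lambda-1)$ is a Hopf ideal of $\bUqg$. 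Since $\Upupsilon$ is an algebra isomorphism with $\Upupsilon(\EuScript{Z})=v_\lambda$, it sends the ideal $(\EuScript{Z}-1)$ onto $(v_\lambda-1)$ and therefore descends to a Hopf algebra isomorphism $\URg/(\EuScript{Z}-1)\iso \bUqg/(v_\lambda-1)$. Composing this with the Hopf algebra isomorphism $\bUqg/(v_\lambda-1)\iso U_q^\lambda(\mfg)$ of Proposition \ref{P:bUqg-str}\eqref{Uqlambda:4} yields the desired map $\bar\Upupsilon$.

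The bulk of the difficulty has already been discharged in Theorem \ref{T:main}, Proposition \ref{P:bUqg-str}, and the two preparatory identities, so there is no real obstacle here. What genuinely requires care are the two bookkeeping points flagged above: that $\URg^\upchi$ must be recognized as a Hopf subalgebra via its image rather than directly, because the $\upchi$-automorphisms need not be coalgebra maps; and that $\EuScript{Z}$ is grouplike, so that passing to the quotient indeed produces a Hopf algebra. Neither is hard, but both are indispensable for the isomorphisms to be asserted at the level of Hopf algebras rather than merely algebras.
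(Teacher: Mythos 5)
Your proposal is correct and follows essentially the same route as the paper: the paper deduces the corollary precisely from the intertwining relation $\upchi_{q,\zeta}\circ\Upupsilon=\Upupsilon\circ\upchi_\zeta$, the identity $\Upupsilon(\EuScript{Z})=v_\lambda$, and Parts \eqref{Uqlambda:3} and \eqref{Uqlambda:4} of Proposition \ref{P:bUqg-str}, exactly as you do. The only difference is that you spell out the Hopf-level bookkeeping the paper leaves implicit — identifying $\URg^\upchi$ as $\Upupsilon^{-1}(U_q(\mfg))$ to see it is a Hopf subalgebra (since $\upchi_\zeta$ is only an algebra automorphism), and noting that $\EuScript{Z}$ is grouplike so that $(\EuScript{Z}-1)$ is a Hopf ideal — both of which are accurate and consistent with the paper's framework.
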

{
In particular, $\URg^\upchi$ is generated by the set of elements $\{\upxi_i^{\pm 1},\Xr_i,\Yr_i\}_{i\in \mbI}$. Another system of generators can be obtained by normalizing $\mrL^+$ and $\mrL^-$ so that they act as the identity operators on the highest and lowest  weight spaces of $V$, respectively. In more detail, let $w$ denotes the longest element of the Weyl group of $\mfg$, and set
\begin{equation*}
\mathit{L}^+:=\mathscr{l}_\lambda^{-1} \mrL^+ \quad \text{ and }\quad \mathit{L}^-:=\mrL^- \mathscr{l}_{w(\lambda)}.
\end{equation*} 
Then $\mathit{L}^+|_{V_\lambda}=\mathrm{Id}_{V_\lambda}$, $\mathit{L}^-|_{V_{w(\lambda)}}=\mathrm{Id}_{V_{w(\lambda)}}$, and it is not difficult to see that the coefficients of $\mathit{L}^\pm$ generate $\URg^\upchi\cong U_q(\mfg)$ as an algebra.

The larger subalgebra  $U_q(\mfg)\otimes \Q(q)[v_\lambda^{\pm 1}]$ of $\bUqg$ considered in Part \eqref{Uqlambda:2} of Proposition \ref{P:bUqg-str} also admits a natural characterization within $\URg$, as we now explain. Set $n:=\olambda$, and define
\begin{equation*}
\mrL^\pm_{V^{\otimes n}}:=\mrL_1^\pm \mrL_2^\pm \cdots \mrL_n^\pm \in \End(V)^{\otimes n}\otimes \URg
\end{equation*}
so that $\mrL^+_{V^{\otimes n}}$ coincides with $\mrL_{V^{\otimes n}}$ defined above \eqref{RLL:n}. 
Similarly, we define $\scK_{V^{\otimes n}}$ in $\End(V)^{\otimes n}\otimes U_q(\mfg)$ by  
\begin{equation*}
\scK_{V^{\otimes n}}:=\scK_1\scK_2\cdots \scK_n= \sum_{\mu} \mathrm{Id}_{(V^{\otimes n})_\mu} \otimes K_{\mu},
\end{equation*}
where the sum is taken over the set of weights of $V^{\otimes n}$, which is a subset of the root lattice $\msQ$ since $n=\olambda$. We then have the following corollary, which is deduced from the definitions of $\EuScript{Z}$ and $\Upupsilon$ with the help of the relations \eqref{R^+:Hopf} and \eqref{Uppsi-K}; see also Lemma \ref{L:R-mbbV} and the computation of $(\mbbL_{V^{\otimes n}})_{\alpha_i}$ in the proof of  Proposition \ref{P:Big-Rec}. 
\begin{corollary}\label{C:nlambda}
Let $\dot\mbU_\mrR^{n\lambda}(\mfg)$ denote the subalgebra of $\URg$ generated by the coefficients of $\mrL_{V^{\otimes n}}^+$ and $\mrL_{V^{\otimes n}}^-$. Then $\Upupsilon$ restricts to an isomorphism
\begin{equation*}
\Upupsilon|_{\dot\mbU_\mrR^{n\lambda}(\mfg)}:\dot\mbU_\mrR^{n\lambda}(\mfg)\iso U_q(\mfg)\otimes \Q(q)[v_\lambda^{\pm 1}]. 
\end{equation*}
Moreover, the coefficients of the normalized matrices $\msL_{V^{\otimes n}}^\pm:=\EuScript{Z}^{\mp 1}\mrL_{V^{\otimes n}}^\pm$ generate $\URg^\upchi\cong U_q(\mfg)$ and one has 
\begin{equation*}
\Upupsilon(\msL_{V^{\otimes n}}^+)= \scK_{V^{\otimes n}}^{-1}\cdot (\pi_{V^{\otimes n}}\otimes \omega)(\mathscr{R}^+), \quad
\Upupsilon(\msL_{V^{\otimes n}}^-)=(\pi_{V^{\otimes n}}\otimes \omega)((\mathscr{R}^+_{21})^{-1})\cdot \scK_{V^{\otimes n}}.
\end{equation*}
\end{corollary}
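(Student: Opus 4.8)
The plan is to compute the image under $\Upupsilon$ of the two generating matrices of $\dot\mbU_\mrR^{n\lambda}(\mfg)$ and then read off both the explicit formulas and the identification of subalgebras. Since $\Upupsilon$ is an algebra isomorphism sending $\mrL^\pm\mapsto\scL^\pm$, applying $\mathrm{Id}_{\End(V)^{\otimes n}}\otimes\Upupsilon$ to $\mrL^\pm_{V^{\otimes n}}=\mrL^\pm_1\cdots\mrL^\pm_n$ yields $\scL^\pm_1\cdots\scL^\pm_n$. Thus everything rests on the product formula
\[
\scL^+_1\cdots\scL^+_n = v^n\,\scK_{V^{\otimes n}}^{-1}(\pi_{V^{\otimes n}}\otimes \omega)(\mathscr{R}^+),\qquad \scL^-_1\cdots\scL^-_n = v^{-n}(\pi_{V^{\otimes n}}\otimes \omega)((\mathscr{R}^+_{21})^{-1})\,\scK_{V^{\otimes n}},
\]
where $\pi_{V^{\otimes n}}$ is the action homomorphism of the $n$-fold tensor power. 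Establishing this is the heart of the matter.

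For the first identity I would first remove $\omega$: since $\omega$ is an algebra involution and $(\mathrm{Id}\otimes\omega)(\scK)=\scK^{-1}$, one has $\scL^+=v\,(\mathrm{Id}\otimes\omega)(M^+)$ with $M^+:=\scK\,(\pi\otimes\mathrm{Id})(\mathscr{R}^+)\in\End(V)\otimes U_q(\mfg)$, and because $\mathrm{Id}^{\otimes n}\otimes\omega$ is an algebra homomorphism the problem reduces to proving $M^+_1\cdots M^+_n=\scK_{V^{\otimes n}}(\pi_{V^{\otimes n}}\otimes\mathrm{Id})(\mathscr{R}^+)$. This I would prove by induction on $n$, the base case being the definition. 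The inductive step is the $n=2$ computation performed with $V^{\otimes(n-1)}$ in the first slot: one expands $(\Delta\otimes\mathrm{Id})(\mathscr{R}^+)$ via the first coproduct identity in \eqref{R^+:Hopf}, pushes the resulting $\Uppsi$-twist through $\pi_{V^{\otimes(n-1)}}$ using \eqref{Uppsi-K}, and checks that the two grouplike $\scK$-factors combine correctly; the only nontrivial point is that the weight-dependent scalars produced on the two sides match, which follows from the symmetry of $(\,,\,)$. Coassociativity then upgrades $\pi_{V^{\otimes(n-1)}}\otimes\pi$ to $\pi_{V^{\otimes n}}$. The second identity is entirely analogous, using the second identity in \eqref{R^+:Hopf} and the coproduct of $(\mathscr{R}^+_{21})^{-1}$. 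This inductive product formula is the main obstacle; everything else is bookkeeping.

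Granting the product formula, the stated formulas for $\Upupsilon(\msL^\pm_{V^{\otimes n}})$ follow at once from $\msL^\pm_{V^{\otimes n}}=\EuScript{Z}^{\mp 1}\mrL^\pm_{V^{\otimes n}}$ and $\Upupsilon(\EuScript{Z})=v_\lambda=v^n$, as the central factors $v^{\pm n}$ cancel. To see that the coefficients of $\msL^\pm_{V^{\otimes n}}$ generate $\URg^\upchi\cong U_q(\mfg)$, note on the one hand that $\Upupsilon(\msL^\pm_{V^{\otimes n}})$ lands in $\End(V^{\otimes n})\otimes U_q(\mfg)$, which is fixed by every $\upchi_{q,\zeta}$; since $\upchi_\zeta=\Upupsilon^{-1}\upchi_{q,\zeta}\Upupsilon$, the coefficients of $\msL^\pm_{V^{\otimes n}}$ lie in $\URg^\upchi$. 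On the other hand, the extreme off-diagonal entries of $\mbbL_{V^{\otimes n}}=\Lz_{V^{\otimes n}}^{-1}\mrL_{V^{\otimes n}}$ are exactly $\pi_{V^{\otimes n}}(E_i)\otimes\Xr_i$ by Proposition \ref{P:Big-Rec}, while the weight-zero diagonal entries of $\mrL^\pm_{V^{\otimes n}}$ are $\mathscr{l}_\lambda^{\pm n}\upxi_{n\lambda-\mu}^{\pm 1}$ (using Corollary \ref{C:upxi}); taking ratios of the latter across the weights of $V^{\otimes n}$ produces every $\upxi_i^{\pm 1}$, since differences of weights of $V$ already realize all simple roots (Lemma \ref{L:lattice-V}). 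Hence the subalgebra generated by the coefficients of $\msL^\pm_{V^{\otimes n}}$ both contains the generating set $\{\upxi_i^{\pm 1},\Xr_i,\Yr_i\}_{i\in\mbI}$ of $\URg^\upchi$ (Corollary \ref{C:Recover}) and is contained in $\URg^\upchi$, so it equals $\URg^\upchi$.

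Finally, for the isomorphism I would show $\dot\mbU_\mrR^{n\lambda}(\mfg)=\langle\URg^\upchi,\EuScript{Z}^{\pm 1}\rangle$. The same description of the weight-zero diagonal entries of $\mrL^\pm_{V^{\otimes n}}$, together with $\EuScript{Z}=\mathscr{l}_\lambda^{n}\upxi_{n\lambda}$, shows that $\mathscr{l}_\lambda^{\pm n}$, and hence $\EuScript{Z}^{\pm 1}$, lie in $\dot\mbU_\mrR^{n\lambda}(\mfg)$; consequently the coefficients of $\msL^\pm_{V^{\otimes n}}=\EuScript{Z}^{\mp 1}\mrL^\pm_{V^{\otimes n}}$ lie there too, giving $\URg^\upchi\subseteq\dot\mbU_\mrR^{n\lambda}(\mfg)$, while conversely $\dot\mbU_\mrR^{n\lambda}(\mfg)$ is generated by the coefficients of $\mrL^\pm_{V^{\otimes n}}=\EuScript{Z}^{\pm 1}\msL^\pm_{V^{\otimes n}}$, which lie in $\langle\URg^\upchi,\EuScript{Z}^{\pm 1}\rangle$. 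Applying $\Upupsilon$ and Corollary \ref{C:Recover}, with $\Upupsilon(\EuScript{Z})=v_\lambda$, then gives
\[
\Upupsilon(\dot\mbU_\mrR^{n\lambda}(\mfg))=\langle U_q(\mfg),\,v_\lambda^{\pm 1}\rangle = U_q(\mfg)\otimes\Q(q)[v_\lambda^{\pm 1}],
\]
which is the desired isomorphism.
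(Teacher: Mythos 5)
Your proposal is correct and follows essentially the same route the paper sketches for this corollary: the product formula for $\scL^\pm_1\cdots\scL^\pm_n$ via the coproduct identities \eqref{R^+:Hopf} together with \eqref{Uppsi-K}, the cancellation of $v^{\pm n}$ against $\Upupsilon(\EuScript{Z}^{\mp1})=v_\lambda^{\mp1}$, and the generation statements via the computation of $(\mbbL_{V^{\otimes n}})_{\alpha_i}$ from Proposition \ref{P:Big-Rec}, the diagonal entries from Theorem \ref{T:Lz} and Corollary \ref{C:upxi}, and Corollary \ref{C:Recover}. The only point left implicit is the minus-side analogue $(\mbbL^-_{V^{\otimes n}})_{-\alpha_i}=\pi_{V^{\otimes n}}(F_i)\otimes\Yr_i$ needed to extract $\Yr_i$, which follows by the same computation (or by symmetry) and does not affect the argument.
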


\begin{remark}
Since $n\lambda$ is a nonzero dominant integral weight, we may consider the quantum group 
$\mathbf{U}_\mrR^{n\lambda}(\mfg)$ obtained from Definition \ref{D:bUqg} by replacing $V$ by the irreducible summand $V(n\lambda)$ of $V^{\otimes n}$. Since, by  Part \eqref{Uqlambda:2} of Proposition \ref{P:bUqg-str}, one has $\mbU_q^{n\lambda}(\mfg)\cong U_q(\mfg)\otimes \Q(q)[v_\lambda^{\pm 1}]$, Theorem \ref{T:main} outputs an isomorphism of $\msQ$-graded Hopf algebras
\begin{equation*}
 \Upupsilon_{n\lambda}: \mathbf{U}_\mrR^{n\lambda}(\mfg) \iso U_q(\mfg)\otimes \Q(q)[v_\lambda^{\pm 1}].
\end{equation*}
The composite  $\Upupsilon|_{\dot\mbU_\mrR^{n\lambda}(\mfg)}^{-1}\circ \Upupsilon_{n\lambda}$ then provides an isomorphism $\mathbf{U}_\mrR^{n\lambda}(\mfg)\iso \dot\mbU_\mrR^{n\lambda}(\mfg)$ which satisfies 
\begin{equation*}
(\Upupsilon|_{\dot\mbU_\mrR^{n\lambda}(\mfg)}^{-1}\circ \Upupsilon_{n\lambda})(\mrL_{V(n\lambda)}^\pm)
=\mathbf{1}_{V(n\lambda)} \circ \mrL^\pm_{V^{\otimes n}} \circ \imath_{V(n\lambda)},
\end{equation*}
where $\mrL^\pm_{V(n\lambda)}$ are the generating matrices $\mrL^\pm$ for $\mbU_\mrR^{n\lambda}(\mfg)$,  $\mathbf{1}_{V(n\lambda)}\in \End_{U_q(\mfg)}(V^{\otimes n})$ is the projection onto the summand $V(n\lambda)$ of $V^{\otimes n}$, and $\imath_{V(n\lambda)}:V(n\lambda)\into V^{\otimes n}$ is the inclusion map. 
\end{remark}



\bibliography{Yangians}

\end{document}